%% file: main.tex
\documentclass[10pt]{article}

\usepackage[a4paper,margin=1in]{geometry}

\usepackage{amsmath}
\usepackage{amsthm}
\usepackage{xcolor}
\usepackage{hyperref}
\hypersetup{
  colorlinks=true,
  linkcolor=black,
  citecolor=black,
  urlcolor=blue,
}
\usepackage[capitalize,nameinlink]{cleveref}   

\input{./setup/my_package_list}
\input{./setup/common_commands}

\input{./setup/commands_mesh}
\input{./setup/commands_splines}
\input{./tikz/myTikZCommands}
\theoremstyle{plain}
\newtheorem{theorem}{Theorem}[section]
\newtheorem{proposition}{Proposition}[section]
\newtheorem{lemma}{Lemma}[section]
\newtheorem{corollary}{Corollary}[section]

\theoremstyle{definition}
\newtheorem{definition}{Definition}[section]
\newtheorem{assumption}{Assumption}

\theoremstyle{remark}
\newtheorem{remark}{Remark}[section]

\title{Adaptively-refinable polar-spline discrete differential forms:\\
  hierarchical construction, exactness, and applications}

\author{%
  Diogo C. Cabanas\thanks{Delft Institute of Applied Mathematics, Delft
    University of Technology, The Netherlands
    (\texttt{d.costacabanas@tudelft.nl}).}
  \and Deepesh Toshniwal\thanks{Delft Institute of Applied Mathematics, Delft
    University of Technology, The Netherlands
    (\texttt{d.toshniwal@tudelft.nl}). Corresponding author.}
  \and Rafael Vazquez\thanks{Department of Applied Mathematics, University of
    Santiago de Compostela, Spain, and Galician Centre for Mathematical Research
    and Technology (CITMAga), University of Santiago de Compostela, Spain
    (\texttt{rafael.vazquez@usc.es}).}%
}

\date{\today}

\begin{document}

\maketitle

\begin{abstract}
	A common approach to representing disk-like and sphere-like geometries in isogeometric analysis is to use collapsed-edge singularities in the underlying parameterization.
	The resulting tensor-product B-spline spaces on such singular parameterizations lack the required smoothness to be used in isogeometric analysis.
	Polar splines (Toshniwal et al., CMAME 2017) rectify this issue by extracting a smoother subspace, and have been successfully used to discretize high-order partial differential equations, as well as to perform structure-preserving discretizations of the de Rham complex.
	The latter is particularly relevant for mixed formulations that appear in applications such as electromagnetism and fluid dynamics.

	The main contributions of this paper are twofold.
	We combine the polar spline construction with hierarchical splines to obtain adaptively-refinable polar splines.
	Moreover, we do so in the context of structure-preserving methods.
	That is, we show how to construct hierarchical polar spline $k$-form spaces, prove that the corresponding basis functions are linearly independent, and prove that they form a cohomologically-correct discrete de Rham complex.

	These contributions provide a mathematically sound foundation for adaptive structure-preserving simulations on polar geometries.
	We also provide numerical experiments that validate the theory and illustrate the practical behavior of the construction with problems where preserving the structure is mandatory.
	These include a study of spurious harmonics that highlights how cohomology-breaking refinements differ between the polar and tensor-product settings, a discretization of a Maxwell eigenvalue problem on a locally-refined polar mesh, and an adaptive convergence study for a Hodge--Laplace problem.
\end{abstract}

\noindent\textbf{Keywords.}
Isogeometric analysis, polar splines, hierarchical splines, adaptive refinement, differential forms, de Rham complex

\smallskip
\noindent\textbf{MSC codes.}
65D07, 65N30, 65N25, 65D17, 58A12, 65N50

\input{./sections/introduction}
\input{./sections/preliminaries-bsplines}
\input{./sections/preliminaries-polarsplines}
\input{./sections/construction}

\input{./sections/exactness_polar}

\input{./sections/numerics}
\input{./sections/conclusions}

\appendix
\crefalias{section}{appendix}
\input{./sections/app-intermediate-extraction}
\input{./sections/app-extraction}
\input{./sections/app-extraction-refinement}

\bibliographystyle{plain}
\bibliography{bibliography}

\end{document}

%% file: setup/my_package_list.tex
\usepackage[T1]{fontenc}

\usepackage{amsfonts,amssymb}     
\usepackage{rotating}
\usepackage[normalem]{ulem}
\usepackage{graphicx}
\usepackage[labelformat=simple]{subcaption}   

\usepackage{wrapfig}
\usepackage{multirow}
\usepackage{xparse}
\usepackage{environ}
\usepackage{stackengine,wasysym}
\usepackage{mathdots}
\usepackage{booktabs}
\usepackage{siunitx}
\usepackage{csvsimple}

\usepackage{tensor}
\usepackage{varwidth,calc}

\usepackage{pgfplotstable}
\usepackage{pgfplots}
\pgfplotsset{compat=1.14}
\pgfplotsset{
	colormap={my basis colormap}{
		rgb255=(0, 114, 189);
		rgb255=(54, 106, 148);
		rgb255=(108, 98, 107);
		rgb255=(163, 91, 66);
		rgb255=(217, 83, 25);
	}
}
\pgfplotsset{
	colormap={my parula}{
		rgb255=(53.0655, 42.406, 134.9460);
		rgb255=(20.2336, 132.6061, 211.9511);
		rgb255=(55.5463, 184.8859, 157.9118);
		rgb255=(208.7187, 186.8470,  89.1966);
		rgb255=(248.9565, 250.6905, 13.7190);
	}
}
\pdfpageattr {/Group << /S /Transparency /I true /CS /DeviceRGB>>}
\usepackage{tikz}
\usepgfmodule{oo}
\usetikzlibrary{calc}
\usetikzlibrary{shapes}
\usetikzlibrary{plotmarks}
\usetikzlibrary{backgrounds}
\usetikzlibrary{decorations.pathmorphing,decorations.pathreplacing,decorations.markings}
\usetikzlibrary{arrows.meta,arrows,fit,matrix,positioning,shapes.geometric,positioning}
\usetikzlibrary{3d}
\usetikzlibrary{cd}
\providecommand{\tikzexternaldisable}{}

\providecommand{\tikzsetnextfilename}[1]{}

\usepackage{etoolbox}

\usepackage[inline]{enumitem}

\usepackage{scalerel,stackengine,wasysym}

\usepackage{accents}

\definecolor{lightgray}{gray}{0.80}

\usepackage{makecell}

\crefname{lemma}{Lemma}{Lemmas}
\crefname{remark}{Remark}{Remarks}
\crefname{theorem}{Theorem}{Theorems}
\crefname{proposition}{Proposition}{Propositions}
\crefname{corollary}{Corollary}{Corollaries}
\crefname{definition}{Definition}{Definitions}
\crefname{figure}{Fig.}{Figs.}
\crefname{table}{Table}{Tables}
\crefname{algorithm}{Algorithm}{Algorithms}
\crefname{section}{Section}{Sections}
\crefname{subsection}{Section}{Sections}
\crefname{appendix}{Appendix}{Appendices}
\crefname{assumption}{Assumption}{Assumptions}

%% file: setup/common_commands.tex
\newcommand{\Bezier}{B\'ezier~}

\usepackage{mathtools}

\newcommand{\mbf}[1]{{\boldsymbol{#1}}}
\newcommand{\RR}{{\mathbb{R}}}

\newcommand{\NN}{{\mathbb{N}}}
\newcommand{\PP}{{\mathfrak{P}}}

\DeclareMathOperator{\rank}{rank}

\newcommand{\dimwp}[1]{\dim{\left(#1\right)}}

\newcommand{\spaceSpan}[1]{\ensuremath{\text{span}}\left(#1\right)}
\newcommand{\isomorphic}{\cong}
\DeclareMathOperator{\trace}{tr}

\DeclareMathOperator{\supp}{supp}

\newcommand{\suppwp}[1]{\supp{\left(#1\right)}}

\stackMath
\newcommand\reallywidehat[1]{%
	\savestack{\tmpbox}{\stretchto{%
			\scaleto{%
				\scalerel*[\widthof{\ensuremath{#1}}]{\kern-.6pt\bigwedge\kern-.6pt}%
				{\rule[-\textheight/2]{1ex}{\textheight}}
			}{\textheight}%
		}{0.5ex}}%
	\stackon[1pt]{#1}{\tmpbox}%
}
\stackMath
\newcommand\reallywidecheck[1]{%
	\savestack{\tmpbox}{\stretchto{%
			\scaleto{%
				\scalerel*[\widthof{\ensuremath{#1}}]{\kern-.6pt\bigvee\kern-.6pt}%
				{\rule[-\textheight/2]{1ex}{\textheight}}
			}{\textheight}%
		}{0.5ex}}%
	\stackon[1pt]{#1}{\tmpbox}%
}

\ExplSyntaxOn
\NewEnviron{bmatrixT}
{
	\marine_transpose:V \BODY
}

\int_new:N \l_marine_transpose_row_int
\int_new:N \l_marine_transpose_col_int
\seq_new:N \l_marine_transpose_rows_seq
\seq_new:N \l_marine_transpose_arow_seq
\prop_new:N \l_marine_transpose_matrix_prop
\tl_new:N \l_marine_transpose_last_tl
\tl_new:N \l_marine_transpose_body_tl

\cs_new_protected:Nn \marine_transpose:n
{
	\seq_set_split:Nnn \l_marine_transpose_rows_seq { \\ } { #1 }
	\int_zero:N \l_marine_transpose_row_int
	\prop_clear:N \l_marine_transpose_matrix_prop
	\seq_map_inline:Nn \l_marine_transpose_rows_seq
	{
		\int_incr:N \l_marine_transpose_row_int
		\int_zero:N \l_marine_transpose_col_int
		\seq_set_split:Nnn \l_marine_transpose_arow_seq { & } { ##1 }
		\seq_map_inline:Nn \l_marine_transpose_arow_seq
		{
			\int_incr:N \l_marine_transpose_col_int
			\prop_put:Nxn \l_marine_transpose_matrix_prop
			{
				\int_to_arabic:n { \l_marine_transpose_row_int }
				,
				\int_to_arabic:n { \l_marine_transpose_col_int }
			}
			{ ####1 }
		}
	}
	\tl_clear:N \l_marine_transpose_body_tl
	\int_step_inline:nnnn { 1 } { 1 } { \l_marine_transpose_col_int }
	{
		\int_step_inline:nnnn { 1 } { 1 } { \l_marine_transpose_row_int }
		{
			\tl_put_right:Nx \l_marine_transpose_body_tl
			{
				\prop_item:Nn \l_marine_transpose_matrix_prop { ####1,##1 }
				\int_compare:nF { ####1 = \l_marine_transpose_row_int } { & }
			}
		}
		\tl_put_right:Nn \l_marine_transpose_body_tl { \\ }
	}
	\begin{bmatrix*}[r]
		\l_marine_transpose_body_tl
	\end{bmatrix*}
}
\cs_generate_variant:Nn \marine_transpose:n { V }
\cs_generate_variant:Nn \prop_put:Nnn { Nx }
\ExplSyntaxOff

\makeatletter
\catcode`! 3
\def\Transpose #1{\romannumeral0\expandafter
	\Mar@Transpose@a\romannumeral`^^@\Mar@DoOneRow #1\\!\\}

\def\Mar@DoOneRow #1\\{\Mar@DoOneRow@a {}#1&^^@&}%

\def\Mar@DoOneRow@a #1#2&{%
	\if^^@\detokenize{#2}\expandafter\@gobble\fi
	\Mar@DoOneRow@a {#1#2\\}%
}%

\def\Mar@Transpose@a #1#2\\{\ifx!#2\expandafter\Mar@FinishTranspose\fi
	\expandafter\Mar@Transpose@b\romannumeral`^^@\Mar@DoOneRow@a {}#2&^^@&#1}

\def\Mar@Transpose@b #1#2^^@\\{\Mar@Join {}#2^^@!#1}

\def\Mar@Join #1#2\\#3!#4\\%
{\if^^@\detokenize{#3}\expandafter\Mar@EndJoin\fi
	\Mar@Join {#1#2&#4\\}#3!}%

\def\Mar@EndJoin\Mar@Join #1^^@!^^@\\{\Mar@Transpose@a {#1^^@\\}}

\def\Mar@FinishTranspose
#1&^^@&#2\\^^@\\{ #2}

\catcode`! 12
\makeatother

%% file: setup/commands_mesh.tex
\newcommand{\basis}[1]{\mathcal{#1}}
\newcommand{\vSpace}[1]{\mathbb{#1}}
\newcommand{\complex}[1]{\mathit{\vSpace{#1}^\bullet}}

\newcommand{\mat}[1]{\mbf{\mathrm{#1}}}

\newcommand{\tpDomain}{\widehat{\Omega}}

\newcommand{\cU}{\theta}
\newcommand{\cV}{\rho}
\newcommand{\dcU}{\mathit{d\cU}}
\newcommand{\dcV}{\mathit{d\cV}}

\newcommand{\degU}{p^\cU}
\newcommand{\degV}{p^\cV}

\newcommand{\kntU}{\cU}
\newcommand{\kntV}{\cV}
\newcommand{\kntsU}{\mbf{\kntU}}
\newcommand{\kntsV}{\mbf{\kntV}}
\newcommand{\pkntsU}{\kntsU^\per}

\newcommand{\mult}[1]{\mu(#1)}

\newcommand{\ndofU}{m^\cU}
\newcommand{\ndofV}{m^\cV}

\newcommand{\bsp}{B}
\newcommand{\bspB}{\basis{\bsp}}
\newcommand{\bspS}{\vSpace{\bsp}}

\newcommand{\tpF}[2]{\bspS^{#1}}
\newcommand{\tpB}[2]{\bspB^{#1}}

\newcommand{\cS}{s}
\newcommand{\cT}{t}
\newcommand{\per}{\mathrm{per}}

\newcommand{\pSmU}{\nu^\per}
\newcommand{\pndofU}{m^{\cU,\per}}

\newcommand{\polar}{P}
\newcommand{\polarB}{\basis{\polar}}
\newcommand{\polarS}{\vSpace{\polar}}

\newcommand{\polarF}[1]{\polarS^{#1}}

\newcommand{\jet}[2]{J^{#1}\left(#2\right)}

\newcommand{\bary}{\lambda}

\newcommand{\tint}{\mathrm{int}}
\newcommand{\tpolar}{\mathrm{pol}}
\newcommand{\qDomain}{\Omega^{\per}}
\newcommand{\polarDomain}{\Omega}

\newcommand{\hSpace}[1]{\mathit{\vSpace{H}\vSpace{#1}}}
\newcommand{\hBasis}[1]{\mathit{\basis{H}\basis{#1}}}
\newcommand{\hbsp}[1]{\mathit{H #1}}

\newcommand{\hintComplex}{\complex{H\bsp}^{,\tint}}
\newcommand{\hintComplexZ}{\vSpace{H\bsp}^{\bullet,\tint}_0}
\newcommand{\hpolarComplex}{\complex{H\polar}}

\newcommand{\activeS}[2]{A^{#1}_{#2}}
\newcommand{\ractiveS}[2]{\tilde{A}^{#1}_{#2}}

%% file: tikz/myTikZCommands.tex
\definecolor{myBlue}{rgb} {0,0.4470,0.7410}
\definecolor{myRed}{rgb} {0.8500,0.3250,0.0980}
\definecolor{myGray1}{rgb} {0,0,0}
\definecolor{myGray2}{rgb} {0.6,0.6,0.6}
\definecolor{myGray3}{rgb} {0.45,0.45,0.45}
\definecolor{myGray4}{rgb} {0.8,0.8,0.8}
\definecolor{myGray5}{rgb} {0.55,0.55,0.55}

\definecolor{myCPlot1}{rgb} {0,    0.4470,    0.7410}
\definecolor{myCPlot2}{rgb} {0.8500,   0.3250,    0.0980}
\definecolor{myCPlot3}{rgb} {0.9290,    0.6940,    0.1250}
\definecolor{myCPlot4}{rgb} {0.4940,    0.1840,    0.5560}
\definecolor{myCPlot5}{rgb} {0.4660,    0.6740,    0.1880}
\definecolor{myCPlot6}{rgb} {0.3010,    0.7450,    0.9330}
\definecolor{myCPlot7}{rgb} {0.6350,    0.0780,    0.1840}

\definecolor{myCPColor}{rgb} {0.9255, 0.6941, 0.355}

\tikzstyle{scpColor}=[circle, fill=myCPColor]

\tikzset{
	show curve controls/.style={
		decoration={
			show path construction,
			curveto code={
				\draw[myGray1,dashed,eThickness]
				(\tikzinputsegmentfirst)
				-- (\tikzinputsegmentsupporta)
				-- (\tikzinputsegmentsupportb)
				-- (\tikzinputsegmentlast)
				;
				\fill[scpColor] (\tikzinputsegmentfirst) circle(3pt);
				\fill[scpColor] (\tikzinputsegmentsupporta) circle(3pt);
				\fill[scpColor] (\tikzinputsegmentsupportb) circle(3pt);
				\fill[scpColor] (\tikzinputsegmentlast) circle(3pt);
				\draw[#1,line width=1pt]
				(\tikzinputsegmentfirst)
				.. controls (\tikzinputsegmentsupporta)
				and (\tikzinputsegmentsupportb) ..
				(\tikzinputsegmentlast);
			}
		},decorate
	}
}

\tikzset{
	on each segment/.style={
		decorate,
		decoration={
			show path construction,
			moveto code={},
			lineto code={
				\path [#1]
				(\tikzinputsegmentfirst) -- (\tikzinputsegmentlast);
			},
			curveto code={
				\path [#1] (\tikzinputsegmentfirst)
				.. controls
				(\tikzinputsegmentsupporta) and (\tikzinputsegmentsupportb)
				..
				(\tikzinputsegmentlast);
			},
			closepath code={
				\path [#1]
				(\tikzinputsegmentfirst) -- (\tikzinputsegmentlast);
			},
		},
	},
	mid arrow/.style={postaction={decorate,decoration={
				markings,
				mark=at position .6 with {\arrow[#1]{Latex}}
	}}},
}

\tikzset{
	bThickness/.style={line width=#1\pgflinewidth},
	bThickness/.default={2},
}

\tikzset{
	eThickness/.style={line width=#1\pgflinewidth},
	eThickness/.default={0.5},
}

\tikzset{cross/.style={cross out, draw, 
		minimum size=2*(#1-\pgflinewidth), 
		inner sep=0pt, outer sep=0pt}}

\pgfooclass{bezier quad}{
	
	\attribute vertex 0;
	\attribute vertex 1;
	\attribute vertex 2;
	\attribute vertex 3;
	\attribute degree 0;
	\attribute degree 1;
	
	\method bezier quad(){
	}
	
	\method set vertices(#1,#2,#3,#4,#5,#6,#7,#8) {
		\pgfcoordinate{vertex 0}{\pgfpoint{#1}{#2}}
		\pgfcoordinate{vertex 1}{\pgfpoint{#3}{#4}}
		\pgfcoordinate{vertex 2}{\pgfpoint{#5}{#6}}
		\pgfcoordinate{vertex 3}{\pgfpoint{#7}{#8}}
	}
	
	\method set rectangle vertices(#1,#2,#3,#4) {
		\pgfmathsetmacro{\xz}{#1}
		\pgfmathsetmacro{\yz}{#2}
		\pgfmathsetmacro{\xo}{#1+#3}
		\pgfmathsetmacro{\yo}{#2+#4}
		\pgfoothis.set vertices(\xz,\yz,\xo,\yz,\xo,\yo,\xz,\yo);
	}
	
	\method set vertex nodes(#1,#2,#3,#4) {
		\pgfcoordinate{vertex 0}{\pgfpointanchor{#1}{center}}
		\pgfcoordinate{vertex 1}{\pgfpointanchor{#2}{center}}
		\pgfcoordinate{vertex 2}{\pgfpointanchor{#3}{center}}
		\pgfcoordinate{vertex 3}{\pgfpointanchor{#4}{center}}
	}
	
	\method set degrees(#1,#2) {
		\pgfooset{degree 0}{#1}
		\pgfooset{degree 1}{#2}
	}
	
	\method plot boundary() {
		\draw[eThickness, black] (vertex 0) -- (vertex 1) -- (vertex 2) -- (vertex 3) -- cycle;
	}

	\method plot domain point(#1,#2,#3,#4,#5,#6,#7) {
		\node [draw, black, fill=#7, shape=circle, minimum size=3pt, anchor=center,scale=#1] (domain point #6) at (barycentric cs:b0=#2,b1=#3,b2=#4,b3=#5) {};
	}

	\method set domain extent() {
		\coordinate (b0) at (barycentric cs:vertex 0=1,vertex 1=0.05,vertex 2=0.05,vertex 3=0.05) {};
		\coordinate (b1) at (barycentric cs:vertex 0=0.05,vertex 1=1,vertex 2=0.05,vertex 3=0.05) {};
		\coordinate (b2) at (barycentric cs:vertex 0=0.05,vertex 1=0.05,vertex 2=1,vertex 3=0.05) {};
		\coordinate (b3) at (barycentric cs:vertex 0=0.05,vertex 1=0.05,vertex 2=0.05,vertex 3=1) {};
	}
	
	\method plot domain points(#1,#2) {
		\pgfmathtruncatemacro{\pzero}{\pgfoovalueof{degree 0}}
		\pgfmathtruncatemacro{\pone}{\pgfoovalueof{degree 1}}
		\pgfoothis.mark domain pts(0,\pzero,0,\pone,#1,#2)
	}
	
	\method plot extended boundary() {
		\draw[eThickness, black] (vertex 0) -- (vertex 1) -- (vertex 2) -- (vertex 3) -- cycle;
		\foreach \x/\y in {0/1,1/2,2/3,3/0}{
			\draw[eThickness, dashed, black] (vertex \x) -- ($(vertex \y)!1.1!(vertex \x)$);
			\draw[eThickness, dashed, black] (vertex \y) -- ($(vertex \x)!1.1!(vertex \y)$);
		}
	}
	
	\method label vertices(#1,#2,#3,#4,#5) {
		\node [scale=#5] at (vertex 0) {#1};
		\node [scale=#5] at (vertex 1) {#2};
		\node [scale=#5] at (vertex 2) {#3};
		\node [scale=#5] at (vertex 3) {#4};
	}
	
	\method label edges(#1,#2,#3,#4,#5) {
		\node [scale=#5] at ($(vertex 0)!0.5!(vertex 1)$) {#1};
		\node [scale=#5] at ($(vertex 1)!0.5!(vertex 2)$) {#2};
		\node [scale=#5] at ($(vertex 2)!0.5!(vertex 3)$) {#3};
		\node [scale=#5] at ($(vertex 3)!0.5!(vertex 0)$) {#4};
	}
	
	\method label face(#1,#2) {
		\node [anchor=center,scale=#2] at (barycentric cs:vertex 0=0.25,vertex 1=0.25,vertex 2=0.25,vertex 3=0.25) {#1};
	}

	\method mark domain pts(#1,#2,#3,#4,#5,#6) {
		\pgfoothis.set domain extent();
		\pgfmathtruncatemacro{\pzero}{\pgfoovalueof{degree 0}}
		\pgfmathtruncatemacro{\pone}{\pgfoovalueof{degree 1}}
		\foreach \i in {#1,...,#2}{
			\foreach \j in {#3,...,#4}{
				\pgfmathsetmacro{\a}{\i/\pzero}
				\pgfmathsetmacro{\b}{\j/\pone}
				\pgfmathsetmacro{\aone}{(1-\a)*(1-\b)}
				\pgfmathsetmacro{\atwo}{(\a)*(1-\b)}
				\pgfmathsetmacro{\athree}{(\a)*(\b)}
				\pgfmathsetmacro{\afour}{(1-\a)*(\b)}
				\pgfoothis.plot domain point(#5,\aone,\atwo,\athree,\afour,\i\j, #6)
			}
		}
	}

	\method plot domain pt grid(#1,#2) {
		\pgfoothis.plot domain points(#1,#2);
		\pgfmathtruncatemacro{\pzero}{\pgfoovalueof{degree 0}}
		\pgfmathtruncatemacro{\pone}{\pgfoovalueof{degree 1}}
		\foreach \i in {0,...,\pzero}{
			\draw[dashed, eThickness] (domain point \i0) -- (domain point \i\pone);
		}
		\foreach \j in {0,...,\pone}{
			\draw[dashed, eThickness] (domain point 0\j) -- (domain point \pzero\j);
		}
		\pgfoothis.plot domain points(#1,#2);
	}

	\method mark edge pts(#1,#2,#3,#4) {
		{
			\ifthenelse{#1 = 0}{
				\pgfmathtruncatemacro{\iz}{0}
				\pgfmathtruncatemacro{\io}{\pgfoovalueof{degree 0}}
				\pgfmathtruncatemacro{\jz}{0}
				\pgfmathtruncatemacro{\jo}{#2}
				\pgfoothis.mark domain pts(\iz,\io,\jz,\jo,#3,#4)
			}{};}
	
		{\ifthenelse{#1=1}{
			\pgfmathtruncatemacro{\iz}{\pgfoovalueof{degree 0} - #2}
			\pgfmathtruncatemacro{\io}{\pgfoovalueof{degree 0}}
			\pgfmathtruncatemacro{\jz}{0}
			\pgfmathtruncatemacro{\jo}{\pgfoovalueof{degree 1}}
			\pgfoothis.mark domain pts(\iz,\io,\jz,\jo,#3,#4)
		}{};}

		{\ifthenelse{#1=2}{
			\pgfmathtruncatemacro{\iz}{0}
			\pgfmathtruncatemacro{\io}{\pgfoovalueof{degree 0}}
			\pgfmathtruncatemacro{\jz}{\pgfoovalueof{degree 1}-#2}
			\pgfmathtruncatemacro{\jo}{\pgfoovalueof{degree 1}}
			\pgfoothis.mark domain pts(\iz,\io,\jz,\jo,#3,#4)
		}{};}

		{\ifthenelse{#1=3}{
			\pgfmathtruncatemacro{\iz}{0}
			\pgfmathtruncatemacro{\io}{#2}
			\pgfmathtruncatemacro{\jz}{0}
			\pgfmathtruncatemacro{\jo}{\pgfoovalueof{degree 1}}
			\pgfoothis.mark domain pts(\iz,\io,\jz,\jo,#3,#4)
		}{};}
	}

	\method mark vertex pts(#1,#2,#3,#4,#5) {
		{
			\ifthenelse{#1 = 0}{
				\pgfmathtruncatemacro{\iz}{0}
				\pgfmathtruncatemacro{\io}{#2}
				\pgfmathtruncatemacro{\jz}{0}
				\pgfmathtruncatemacro{\jo}{#3}
				\pgfoothis.mark domain pts(\iz,\io,\jz,\jo,#4,#5)
			}{};}
		
		{\ifthenelse{#1=1}{
				\pgfmathtruncatemacro{\iz}{\pgfoovalueof{degree 0} - #2}
				\pgfmathtruncatemacro{\io}{\pgfoovalueof{degree 0}}
				\pgfmathtruncatemacro{\jz}{0}
				\pgfmathtruncatemacro{\jo}{#3}
				\pgfoothis.mark domain pts(\iz,\io,\jz,\jo,#4,#5)
			}{};}
		
		{\ifthenelse{#1=2}{
				\pgfmathtruncatemacro{\iz}{\pgfoovalueof{degree 0} - #2}
				\pgfmathtruncatemacro{\io}{\pgfoovalueof{degree 0}}
				\pgfmathtruncatemacro{\jz}{\pgfoovalueof{degree 1}-#3}
				\pgfmathtruncatemacro{\jo}{\pgfoovalueof{degree 1}}
				\pgfoothis.mark domain pts(\iz,\io,\jz,\jo,#4,#5)
			}{};}
		
		{\ifthenelse{#1=3}{
				\pgfmathtruncatemacro{\iz}{0}
				\pgfmathtruncatemacro{\io}{#2}
				\pgfmathtruncatemacro{\jz}{\pgfoovalueof{degree 1}-#3}
				\pgfmathtruncatemacro{\jo}{\pgfoovalueof{degree 1}}
				\pgfoothis.mark domain pts(\iz,\io,\jz,\jo,#4,#5)
			}{};}
	}
}
\pgfoonew \bezier=new bezier quad()

%% file: sections/introduction.tex
\section{Introduction}
\label{sec:introduction}

The construction of finite element spaces that satisfy a de Rham diagram is essential to
develop stable and accurate numerical schemes in several fields, such as electromagnetism,
fluid dynamics or magnetohydrodynamics. One of the main properties of this kind of finite
elements is that they preserve, at the discrete level, some of the fundamental properties of the continuous solution, such as the incompressibility condition in fluid dynamics.
The framework of finite element exterior calculus (FEEC) \cite{Arnold:2006,Arnold:2010} provides a systematic way of grouping, analyzing and developing such finite element spaces.
An analogous construction in the context of isogeometric analysis was given for the first time in \cite{Buffa:2010,Buffa:2011}, using tensor-product splines with high regularity for the discretization, and NURBS functions for the geometry. Since then, discrete spaces of spline differential forms have been used in several applications, such as fluid dynamics \cite{Evans:2013}, fluid-structure interaction \cite{Kamensky:2017}, electromagnetism \cite{Doelz:2024} or magnetohydrodynamics and plasma physics \cite{Holderied:2021,CamposPinto:2022}.

One of the main limitations of the cited construction is the tensor-product structure of the
spline spaces, for two main reasons: First, to maintain the tensor-product structure, the
refinement of any element has to be extended to the whole domain, which dramatically
increases the number of degrees of freedom. Second, the domain has to be defined as the
image of a square in $\mathbb{R}^2$, or a cube in $\mathbb{R}^3$, through a regular parameterization. While the definition can be generalized to multi-patch domains, formed by the disjoint union of several images of the square or cube, the high regularity property of splines is lost at the interfaces between patches, and the number of patches to construct simple geometries can be relatively high, see \cite[Sect.~2.3]{Cottrell:2009}  for examples of multi-patch constructions.

To deal with the first issue, different kinds of splines that break the tensor-product structure have been studied in the literature, such as T-splines, LR-splines, hierarchical splines or splines on triangulations. Among them, hierarchical B-splines \cite{Vuong:2011} and truncated hierarchical B-splines \cite{Giannelli:2012,Giannelli:2014}  are probably the most used in isogeometric analysis, thanks to their simple multi-level structure, which is based on two entities: the hierarchical mesh, formed by quadrilateral elements of different levels; and the hierarchical basis, which also combines tensor-product B-splines from different levels, selected from the relation of their supports to the hierarchical mesh. The extension to hierarchical splines that satisfy a de Rham sequence was introduced in \cite{Evans:2020}, with an unexpected drawback: not every hierarchical mesh gives a valid de Rham sequence. To solve this issue, a sufficient condition in the two-dimensional case was given in \cite{Evans:2020}, while a second sufficient condition was obtained in \cite{Shepherd:2024} in general dimensions.

Regarding the second issue, some geometric flexibility can be gained by the use of polar splines \cite{Toshniwal:2017}, a set of splines constructed on meshes with polar singularities.
These polar splines are easily defined (and computed) as linear combinations of
tensor-product B-splines, and one of their main interesting properties is that they allow
one to obtain $C^k$ continuity at the pole, which allows for their use in a direct formulation of high-order partial differential equations.
The special case of $C^1$ smoothness is particularly easy to study and implement, and was also discussed in the context of geometric design in \cite{Speleers:2021}.
The same $C^1$ smooth construction has been used in \cite{Toshniwal:2021} as the starting point of a polar spline de Rham diagram.
There, polar spline differential forms were introduced on disk-like (planar) and sphere-like (surface) geometries by defining suitable subspaces of B-spline differential forms via the notion of \Bezier extractions.
In particular, this polar spline de Rham diagram can incorporate tensor-product multi-degree B-splines \cite{Toshniwal:2020}, i.e., tensor-product spaces built from univariate spaces that allow local polynomial degree adaptivity.
It was proven in \cite{Toshniwal:2021} that this polar spline complex has the correct cohomological structure, and numerical evidence for optimal approximation of Hodge--Laplace problems, as well as for the inf-sup stability of the discretizations, was provided.
These spaces have been later used in follow-up works focused on  magnetohydrodynamics such as \cite{Holderied:2022,Possanner:2023}, incorporated in structure-preserving finite element libraries such as \cite{Holderied:2022b,Cabanas:2026}, while \cite{Gucclu:2025} provide an alternate implementation-oriented perspective of the construction that avoids building the spaces explicitly.

In this paper, we show that polar splines and hierarchical splines can be combined to
obtain locally refined splines on polar domains that form a discrete de Rham
complex.
The construction relies on the abstract framework of \cite{Giannelli:2014}, using the polar splines from \cite{Toshniwal:2021} at each level, and the selection mechanism of hierarchical splines to decide which functions are active. We prove that polar splines from \cite{Toshniwal:2021} are locally linearly independent, for each space of the discrete de Rham sequence. This property allows one to directly apply some of the theoretical results for hierarchical splines in \cite{Giannelli:2014}, and in particular to prove linear independence of active polar splines. The second important property regards the conditions on the hierarchical mesh to obtain a valid de Rham sequence. For the analysis we introduce the new concept of \emph{reduced complex at the pole}. Based on this concept we show that, to obtain a valid de Rham complex, there are no constraints on the hierarchical mesh in the neighborhood of the pole, and therefore the only constraints are the same as in the tensor-product case, which only have to be checked away from the pole.
Putting these ingredients together, our main result establishes sufficient conditions under which the hierarchical polar spline complex is exact.
Combined with the refinement algorithms proposed in \cite{Cabanas:2025,Dijkstra:2026} for hierarchical B-spline de Rham complexes, this provides the first construction of adaptively-refinable, structure-preserving spline differential forms on polar domains.

The outline of the paper is as follows. In \cref{sec:preliminaries-bsplines} we present the de Rham complex for tensor-product splines, with an extension to consider periodicity in the angular direction. Then, in \cref{sec:preliminaries-polar} we recall  the polar spline differential forms from \cite{Toshniwal:2021}, we introduce the new concept of reduced complex at the pole, and we prove that the basis functions of polar splines are locally linearly independent. \cref{sec:construction} regards the definition of the de Rham complex of hierarchical polar splines, their computation through suitable extraction operators, detailed in \cref{app:extraction}, and the proof of some of their  properties. In \cref{sec:exactness-polar} we prove the most important theoretical result of the paper, which is the exactness of the hierarchical polar spline complex, and the proof requires a similar result for an intermediate space with simple modifications at the pole. Finally, in \cref{sec:numerics} we present some numerical results on simple configurations to confirm our theoretical findings, and to show the capabilities of the constructed spaces.

%% file: sections/preliminaries-bsplines.tex
\section{B-spline complexes}\label{sec:preliminaries-bsplines}

In this section, all required notation and definitions will be introduced for
B-spline differential forms in two dimensions.
We will simplify the notation and exposition here by working only with classical polynomial B-splines but we note that the hierarchical construction and many results can be extended to more general settings (e.g., to multi-degree or Tchebycheffian splines \cite{Toshniwal:2017,Toshniwal:2020,Hiemstra:2020}) in a straightforward manner.
We start by introducing univariate B-splines in \cref{sec:preliminaries-univariate-bsplines} and then use them to construct cochain complexes with bivariate tensor-product B-splines in \cref{sec:preliminaries-bivariate-bsplines}.

\input{sections/preliminaries-univariate-bsplines}

\input{sections/preliminaries-bivariate-bsplines}

%% file: sections/preliminaries-univariate-bsplines.tex
\subsection{Univariate B-spline complexes}\label{sec:preliminaries-univariate-bsplines}

Let $\kntsU$ and $\kntsV$ be \emph{open knot vectors} for degrees $\degU, \degV \in\NN$ and of lengths $\ndofU+\degU+1,\ndofV+\degV+1 \in \NN$:
\begin{equation} \label{eq:knotVector}
\begin{split}
  \kntsU &:= [\underbrace{\kntU_1,\dots,\kntU_{\degU+1}}_{= 0},\kntU_{\degU+2},\dots,\kntU_{\ndofU},\underbrace{\kntU_{\ndofU+1},\dots,\kntU_{\ndofU+\degU+1}}_{= 1}],\quad 
  \kntU_{i+1} \geq \kntU_{i}\;,\\
  \kntsV &:= [\underbrace{\kntV_1,\dots,\kntV_{\degV+1}}_{= 0},\kntV_{\degV+2},\dots,\kntV_{\ndofV},\underbrace{\kntV_{\ndofV+1},\dots,\kntV_{\ndofV+\degV+1}}_{= 1}],\quad 
  \kntV_{i+1} \geq \kntV_{i}\;.
\end{split}
\end{equation}
The number of times a knot value is repeated in the knot vectors is called its \emph{multiplicity}, and we will denote the multiplicity of a knot $\gamma$ by $\mult{\gamma} \in \NN$.
We assume that $1 \leq \mult{\kntU_i} \leq \degU-1$, $i = \degU+2, \dots, \ndofU$, and $1 \leq \mult{\kntV_i} \leq \degV-1$, $i = \degV+2, \dots, \ndofV$.
Given such knot vectors, we will always assume that we can retrieve the corresponding degree by considering the multiplicity of the first (equivalently, the last) knot.

Using the above knot vectors, we can define B-splines $\bsp_{i,\degU}$, $i = 1,\dots,\ndofU$, and $\bsp_{i,\degV}$, $i = 1,\dots,\ndofV$, using the well-known Cox--deBoor recursion \cite{deBoor:1978}.
They form a basis for the spaces $\bspS^\cU$ and $\bspS^\cV$, respectively, of (at least) $C^1$-smooth piecewise-polynomial functions of degrees $\degU$ and $\degV$ on the interval $[0,1]$:
\begin{equation}
  \small
\begin{aligned}
  \bspS[\kntsU] &:= \big\{f\in C^1([0,1]): &&f|_{[\kntU_i, \kntU_{i+1}]} \in \PP_{\degU}\;,\;i=1,\dots,\ndofU+\degU\;,\\
  & &&\dfrac{d^rf}{\dcU^r}(\kntU_i^{-}) = \dfrac{d^rf}{\dcU^r}(\kntU_i^{+})\;,\;i=\degU+2,\dots,\ndofU\;\;,\;\;r = 0, \dots, \degU - \mult{\kntU_i}\big\}\;,\\
  \bspS[\kntsV] &:= \big\{f\in C^1([0,1]): &&f|_{[\kntV_i, \kntV_{i+1}]} \in \PP_{\degV}\;,\;i=1,\dots,\ndofV+\degV\;,\\
  & &&\dfrac{d^rf}{\dcV^r}(\kntV_i^{-}) = \dfrac{d^rf}{\dcV^r}(\kntV_i^{+})\;,\;i=\degV+2,\dots,\ndofV\;\;,\;\;r = 0, \dots, \degV - \mult{\kntV_i}\big\}\;.
\end{aligned}
\end{equation}
We will denote with $\bspB[\kntsU]$ and $\bspB[\kntsV]$ the column vectors which contains the corresponding B-spline bases.
With abuse of notation, we will denote sets containing these basis functions in the same way, and the meaning will be clear from the context.

Let $\sim$ be the equivalence relation on $[0,1]$ that identifies $\cU = 0$ and $\cU = 1$.
Given $1 \leq \pSmU \leq \degU-1$, we define the following periodic B-spline space on $[0, 1] / \sim$ with smoothness $\pSmU$ at $\cU=1$ as:
\begin{equation}
  \begin{split}
      \bspS[\pkntsU] &:= \left\{f \in \bspS[\kntsU]~:~\dfrac{d^rf}{\dcU^r}(1^-) = \dfrac{d^rf}{\dcU^r}(0^+)\;,r = 0, \dots, \pSmU\right\}\;.
  \end{split}
\end{equation}
The dimension of this space is $\pndofU = \ndofU - \pSmU - 1$, and the corresponding periodic B-spline basis functions will be denoted by $\bspB[\pkntsU] = [\bsp^\per_{i,\degU}~:~i = 1, \dots, \pndofU]$.

Finally, we define the B-spline spaces $\bspS[d\kntsU]$, $\bspS[d\kntsV]$ and $\bspS[d\pkntsU]$ which contain the derivatives of the splines in $\bspS[\kntsU]$, $\bspS[\kntsV]$ and $\bspS[\pkntsU]$, respectively.
To do so, we first introduce the notation $d\kntsU$ and $d\kntsV$, which refers to the knot vectors defined as:
\begin{equation}
  \begin{split}
    d\kntsU &:= [\kntU_2, \dots, \kntU_{\ndofU+\degU}]\;,\\
    d\kntsV &:= [\kntV_2, \dots, \kntV_{\ndofV+\degV}]\;.
  \end{split}
\end{equation}
These knot vectors define B-splines which span the following derivative spaces:
\begin{equation}
  \begin{split}
        \bspS[d\kntsU] := \left\{\frac{df}{d\cU}:f \in \bspS[\kntsU]\right\} = \partial_{\cU}\bspS[\kntsU]\;,\qquad
        \bspS[d\kntsV] := \left\{\frac{df}{d\cV}:f \in \bspS[\kntsV]\right\} = \partial_{\cV}\bspS[\kntsV]\;.
  \end{split}
\end{equation}
For the periodic case, we define the derivative space as:
\begin{equation}
  \bspS[d\pkntsU] := \left\{f \in \bspS[d\kntsU]~:~\dfrac{d^rf}{\dcU^r}(1^-) = \dfrac{d^rf}{\dcU^r}(0^+)\;,r = 0, \dots, \pSmU-1\right\}\;.
\end{equation}
Note that, unlike the non-periodic case, $\bspS[d\pkntsU] \supsetneq d\bspS[\pkntsU]$.
As before, we will collect the basis functions of the derivative spaces in $\bspB[d\kntsU]$, $\bspB[d\kntsV]$ and $\bspB[d\pkntsU]$.

We end this section on univariate splines with some properties of the periodic and non-periodic B-spline basis functions.
\begin{theorem}
  The following properties hold for the B-spline basis functions defined above:
  \begin{itemize}
    \item The B-spline basis functions in $\bspB[\mbf{\gamma}]$ are locally linearly
        independent, non-negative, and form a partition of unity, $\mbf{\gamma} \in \{\kntsU, \kntsV, \pkntsU, d\kntsU, d\kntsV, d\pkntsU\}$.
    \item The non-periodic B-spline basis functions in $\bspB[\mbf{\gamma}]$ satisfy the following end-point smoothness conditions:
    \begin{equation*}
      1 \leq i \leq p^\gamma+1\;:\;
      \begin{dcases}
        \dfrac{d^{i-1}\bsp_{j,p^\gamma}}{d\gamma^{i-1}}(0) \neq 0\;, & 1 \leq j \leq i\;,\\
        \dfrac{d^{i-1}\bsp_{j,p^\gamma}}{d\gamma^{i-1}}(0) = 0\;, & j > i\;,
      \end{dcases}
    \end{equation*}
    for $\mbf{\gamma} \in \{\kntsU, \kntsV, d\kntsU, d\kntsV\}$. Similar conditions hold at $\cU = 1$ and $\cV = 1$.
  \end{itemize}
\end{theorem}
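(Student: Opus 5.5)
For the non-periodic spaces $\bspS[\kntsU]$, $\bspS[\kntsV]$, $\bspS[d\kntsU]$, $\bspS[d\kntsV]$ the plan is to appeal to classical B-spline theory \cite{deBoor:1978}. Non-negativity and partition of unity follow by induction on the degree from the Cox--de Boor recursion. The recursion writes $\bsp_{j,p}$ as a convex-type combination of two degree-$(p-1)$ B-splines with non-negative linear weights on their supports, and the telescoping of these weights gives $\sum_j \bsp_{j,p}\equiv 1$. Note that $d\kntsU$ is again an open knot vector, with degree $\degU-1$ and interior multiplicities at most $\degU-1$, so it is covered by the same argument.

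For local linear independence I would fix a nonempty open interval $I\subset(\kntU_i,\kntU_{i+1})$. Only the $p+1$ functions $\bsp_{i-p,p},\dots,\bsp_{i,p}$ are nonzero on $I$. By the Curry--Schoenberg theorem (dimension counting against the smoothness conditions in the definition of $\bspS[\kntsU]$) their restrictions span $\PP_p|_I$, which has dimension $p+1$, so they are independent there. An alternative is to use the de Boor--Fix dual functionals supported in a single knot span.

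The end-point conditions follow from the open knot vector. Near $0$ the first $p+1$ knots coincide, so on the first nonempty span $[0,\kntU_{p+2})$ the function $\bsp_{j,p}$ vanishes to order exactly $j-1$ at $0$. This can be verified by differentiating with the standard derivative formula
\[
\bsp_{j,p}' = p\Big(\tfrac{\bsp_{j,p-1}}{\kntU_{j+p}-\kntU_j}-\tfrac{\bsp_{j+1,p-1}}{\kntU_{j+p+1}-\kntU_{j+1}}\Big)
\]
and inducting on $p$. Equivalently, on that span $\bsp_{j,p}$ is a Bernstein-type polynomial with a root of multiplicity exactly $j-1$ at $0$. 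Hence the $(i-1)$st derivative at $0$ is nonzero for $j\le i$ and vanishes for $j>i$. The same argument applies at the right end by symmetry $\cU\mapsto 1-\cU$.

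The periodic cases $\pkntsU$ and $d\pkntsU$ are the main obstacle, because the basis is only implicitly specified. I would commit to the standard construction. Since $\pSmU\le\degU-1$, the space $\bspS[\pkntsU]$ can be identified with splines on a periodically extended knot sequence: identify $\cU=1$ with $0$, where a knot of multiplicity $\degU-\pSmU$ sits. The periodic basis consists of the interior B-splines $\bsp_{j,\degU}$ whose supports avoid the endpoints, together with $\pSmU+1$ "wrapped" B-splines. Each wrapped function is a B-spline on the extended sequence, and it is obtained by gluing the first $\pSmU+1$ and last $\pSmU+1$ end functions via the unique combinations matching derivatives up to order $\pSmU$, which is possible by the triangular structure of the end-point conditions just proved. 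These glued functions are translates of genuine B-splines on the periodic knot sequence, so locally they coincide with standard B-splines. Non-negativity, local linear independence on each span and partition of unity then transfer directly from the non-periodic case. The count $\pndofU=\ndofU-\pSmU-1$ confirms that they form a basis. The same reasoning, with degree $\degU-1$ and smoothness $\pSmU-1$, handles $d\pkntsU$.
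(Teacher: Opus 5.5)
The paper states this theorem without proof, as standard background (cf.\ \cite{deBoor:1978}), so relying on classical theory is the intended route. However, your end-point argument has a real gap. Knowing that $\bsp_{j,p}$ has a root of exact order $j-1$ at $0$ only gives $\frac{d^{k}\bsp_{j,p}}{d\cU^{k}}(0)=0$ for $k<j-1$ and $\neq 0$ for $k=j-1$. That covers the zero pattern for $j>i$ and the diagonal $j=i$, but nothing more.

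The statement also asserts $\frac{d^{i-1}\bsp_{j,p}}{d\cU^{i-1}}(0)\neq 0$ for every $j<i$, i.e.\ that all derivatives of orders $j-1,\dots,p$ of $\bsp_{j,p}$ are nonzero at $0$. Your ``hence'' does not give this: $\cU+\cU^3$ has a root of exact order one and yet a vanishing second derivative. The ``Bernstein-type'' remark does not rescue it either. On $[0,\kntU_{p+2}]$ the B-spline is in general not a single Bernstein polynomial, and nonnegative Bernstein combinations can lose higher derivatives (e.g.\ $2\cU(1-\cU)+2\cU^2=2\cU$).

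The fix is to carry signs through your induction on $p$, proving $(-1)^{k-j+1}\frac{d^{k}\bsp_{j,p}}{d\cU^{k}}(0)>0$ for $1\le j\le k+1\le p+1$. In your derivative formula $\bsp_{1,p-1}\equiv 0$, so $\bsp_{j,p-1}$ and $\bsp_{j+1,p-1}$ are the $(j-1)$st and $j$th functions of the open degree-$(p-1)$ basis. The inductive hypothesis then says that $\frac{d^{k-1}\bsp_{j,p-1}}{d\cU^{k-1}}(0)$ and $-\frac{d^{k-1}\bsp_{j+1,p-1}}{d\cU^{k-1}}(0)$ both have sign $(-1)^{k-j+1}$. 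The only exceptions are that the first term is absent for $j=1$ and the second vanishes for $j=k+1$, and in each case the other term is nonzero. Since the denominators are positive ($\kntU_{p+2}>0$), the two terms never cancel.

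There are two smaller inaccuracies in the periodic part.

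\begin{itemize}
\item The B-splines that survive unchanged are $\bsp_{j,\degU}$ with $\pSmU+2\le j\le \ndofU-\pSmU-1$. They vanish to order at least $\pSmU+1$ at both ends, even though for $j\le \degU+1$ their supports contain $\cU=0$. Taking literally only those whose supports avoid the endpoints gives $\ndofU-2\degU-1+\pSmU$ functions. That is $2(\degU-\pSmU)$ short of $\pndofU$, so the count you invoke does not check out as written.
\item The transfer ``locally they coincide with standard B-splines'' needs $\pndofU\ge \degU+1$. With fewer periodic knots, a periodized B-spline overlaps its own translates, and local linear independence needs a separate short argument. For example, a periodic spline vanishing on one knot span is, on the complementary arc, a compactly supported spline with at most $\degU+1$ knots, hence identically zero.
\end{itemize}
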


%% file: sections/preliminaries-bivariate-bsplines.tex
\subsection{Bivariate periodic B-spline de Rham complexes}
\label{sec:preliminaries-bivariate-bsplines}
Using the above definitions, we can now introduce a B-spline discretization of a 2D de Rham complexes on the periodic domain $\qDomain = \tpDomain / \sim$, where $\sim$ is the equivalence relation identifying $\cU = 0$ and $\cU = 1$.

First, given $\kntsU, \kntsV$, we will define $\bspB[\pkntsU,\kntsV]$ as the column vector which contains the corresponding $\cU$-periodic tensor-product B-spline basis functions on $\qDomain$:
\begin{equation}
  \begin{split}
    \bspB[\pkntsU,\kntsV] &:= [\bsp^\per_{i,\degU}\bsp_{j,\degV} : i = 1, \dots, \pndofU\;\;,\;\;j = 1, \dots, \ndofV]\;.
  \end{split}
\end{equation}
In particular, we assume that we construct vectors such as the last one using the linear indexing $(i,j) \mapsto i + (j-1)\pndofU$.
We define the B-spline space $\bspS[\pkntsU,\kntsV]$ as the span of the basis functions in $\bspB[\pkntsU,\kntsV]$.
Other tensor-product B-spline spaces and the vectors containing their basis functions are defined in a similar way.

Using such tensor-product spaces, and with B-spline $0$-, $1$- and $2$-form spaces defined as:
\begin{equation}\label{eq:periodic-bsplineForms}
  \begin{split}
    \tpF{0,\per}{\pkntsU,\kntsV} &:= \bspS[\pkntsU,\kntsV]\;,\\
    \tpF{1,\per}{\pkntsU,\kntsV} &:= \bspS[d\pkntsU,\kntsV]\;\dcU + \bspS[\pkntsU,d\kntsV]\;\dcV\;,\\
    \tpF{2,\per}{\pkntsU,\kntsV} &:= \bspS[d\pkntsU,d\kntsV]\;\dcU\wedge \dcV\;,
  \end{split}
\end{equation}
we construct the B-spline de Rham complex on $\qDomain$ as follows:
\begin{equation}
  \begin{tikzcd}
    \complex{\bsp}^{,\per}~:~ \tpF{0,\per}{\pkntsU,\kntsV} \arrow{r}{d} & \tpF{1,\per}{\pkntsU,\kntsV} \arrow{r}{d} & \tpF{2,\per}{\pkntsU,\kntsV}\;.
  \end{tikzcd}
\end{equation}
The corresponding $k$-form basis functions are collected in the column vectors $\tpB{k,\per}{\pkntsU,\kntsV}$, $k = 0,1,2$, with $\pndofU\ndofV$ basis functions in $\tpB{0,\per}{\kntsU,\kntsV}$, $\pndofU\ndofV + \pndofU(\ndofV-1)$ basis functions in $\tpB{1,\per}{\kntsU,\kntsV}$ and $\pndofU(\ndofV-1)$ basis functions in $\tpB{2,\per}{\kntsU,\kntsV}$.
The following theorem characterizes the cohomology of the B-spline complex $\complex{\bsp}^{,\per}$; its proof can be found in \cite{Toshniwal:2021}, for example.
\begin{theorem}\label{thm:bspline-cohomology-periodic}
  $H^0(\complex{\bsp}^{,\per}) \isomorphic \RR$, $H^1(\complex{\bsp}^{,\per}) \isomorphic \RR$, and $H^2(\complex{\bsp}^{,\per}) = 0$.
\end{theorem}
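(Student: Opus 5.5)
The plan is to identify the periodic B-spline complex with a tensor product of two univariate cochain complexes and apply a Künneth-type argument. Define the univariate complexes
\[
\mathcal{C}^\cU:\ \bspS[\pkntsU]\xrightarrow{\ \partial_\cU\ }\bspS[d\pkntsU]\,\dcU,\qquad
\mathcal{C}^\cV:\ \bspS[\kntsV]\xrightarrow{\ \partial_\cV\ }\bspS[d\kntsV]\,\dcV .
\]
By the definitions in \eqref{eq:periodic-bsplineForms}, the spaces of $\complex{\bsp}^{,\per}$ are exactly the graded pieces of $\mathcal{C}^\cU\otimes\mathcal{C}^\cV$. Because $d$ acts on products of univariate functions by the Leibniz rule with the usual sign, the exterior derivative is precisely the tensor-product differential. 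Since all spaces are finite dimensional real vector spaces, the algebraic Künneth formula gives
\[
H^k(\complex{\bsp}^{,\per})\isomorphic\bigoplus_{i+j=k}H^i(\mathcal{C}^\cU)\otimes H^j(\mathcal{C}^\cV).
\]

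It therefore suffices to compute the univariate cohomologies.

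For $\mathcal{C}^\cV$, the kernel of $\partial_\cV$ on $\bspS[\kntsV]$ consists of the constants, so $H^0\isomorphic\RR$. Surjectivity onto $\bspS[d\kntsV]$ holds by the very definition of that space, so $H^1=0$. Alternatively, one can count dimensions: $\ndofV-(\ndofV-1)=1$.

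For $\mathcal{C}^\cU$, the kernel of $\partial_\cU$ is again the constants, which are periodic, so $H^0\isomorphic\RR$. For $H^1$, I use dimensions. The dimension of $\bspS[\pkntsU]$ is $\pndofU$, and the dimension of $\bspS[d\pkntsU]$ is $(\ndofU-1)-\pSmU=\pndofU$. This count should be checked: it amounts to imposing $\pSmU$ independent matching conditions on $\bspS[d\kntsU]$, and the counts of basis functions stated in the paper already encode it. The image of $\partial_\cU$ then has dimension $\pndofU-1$, so $H^1\isomorphic\RR$. The image can also be characterized directly as $\{g\in\bspS[d\pkntsU]:\int_0^1 g=0\}$. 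Indeed, if $g$ has zero mean, its primitive $f$ satisfies $f(1)=f(0)$, and the derivative matching of $g$ up to order $\pSmU-1$ gives matching of $f$ up to order $\pSmU$.

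Combining via Künneth:
\begin{itemize}
\item $H^0\isomorphic\RR\otimes\RR\isomorphic\RR$;
\item $H^1\isomorphic (H^1(\mathcal{C}^\cU)\otimes H^0(\mathcal{C}^\cV))\oplus(H^0(\mathcal{C}^\cU)\otimes H^1(\mathcal{C}^\cV))\isomorphic\RR\oplus0$;
\item $H^2\isomorphic \RR\otimes 0=0$.
\end{itemize}

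The main obstacle is the careful justification of the tensor-product identification, namely that $\bspS[d\pkntsU,\kntsV]=\bspS[d\pkntsU]\otimes\bspS[\kntsV]$ with compatible signs in the differential. Closely tied to this is confirming that the independence of the $\pSmU$ periodicity constraints on the derivative space yields the correct dimension. If one prefers to avoid Künneth, a direct check is also available. Surjectivity of $d$ onto 2-forms follows by integrating in $\cV$ using the $\cV$-exactness. For $H^1$, one subtracts a $\cU$-constant representative built from $\bspS[d\pkntsU]\otimes 1$ with nonzero mean, then uses Euler-characteristic counting: $\dim\ker d^1=\dim\tpF{1,\per}{}-\dim\tpF{2,\per}{}=\pndofU\ndofV$, whereas $\dim\operatorname{im} d^0=\pndofU\ndofV-1$, giving $H^1\isomorphic\RR$.
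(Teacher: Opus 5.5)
Your proof is correct. The paper gives no argument of its own here; it defers to \cite{Toshniwal:2021}. Reducing to the two univariate complexes via the tensor-product structure and the Künneth formula is the standard route for tensor-product spline complexes.

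The two points you flag as obstacles are harmless:
\begin{enumerate}
\item The bivariate spaces are defined as spans of products of the univariate bases, so the tensor-product identification is immediate.
\item Your zero-mean description of $\partial_\cU\bspS[\pkntsU]$ already gives $H^1(\mathcal{C}^\cU)\isomorphic\RR$. The mean is a nonzero functional on $\bspS[d\pkntsU]$, which contains the constants. So no independence check of the $\pSmU$ matching conditions is needed, and you need not rely on the paper's basis counts.
\end{enumerate}
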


%% file: sections/preliminaries-polarsplines.tex
\section{Spline complexes on a polar domain}\label{sec:preliminaries-polar}

In this section, we utilize the periodic B-spline complex from Section \ref{sec:preliminaries-bivariate-bsplines} to construct two spline complexes on a polar domain $\polarDomain \subset \RR^2$.
We start by defining the polar geometric map, $\mbf{F} : \qDomain \rightarrow \polarDomain$.
Next, we introduce an auxiliary spline complex on $\polarDomain$, which we call the \emph{intermediate B-spline complex on a polar domain}, and denote it $\complex{\overline{\bsp}}^{,\tint}$.
The spaces in this complex will not have sufficient regularity for discretizing the de Rham complex, but a subcomplex will be introduced following \cite{Toshniwal:2021} that does have the required regularity.
We call this subcomplex the \emph{polar spline complex} and denote it $\complex{\overline{\polar}}$.
Note that instead of recapping the presentation as in \cite{Toshniwal:2021}, we present an alternate but equivalent approach that is more amenable to the hierarchical construction in \cref{sec:construction}.

\subsection{The polar geometric map}
\label{subsec:polar-map}

\begin{figure}
    \centering
	\input{./tikz/scripts/polar_map_horiz}%

    \caption{
        A polar mapping which collapses the bottom edge of $\qDomain$ (displayed as a thick red line) to a single point, the pole, and yields a disk-like polar domain, $\polarDomain$.
        The thick black lines represent the sides of $\qDomain$ which have been identified to yield a periodic domain in the $\cU$-direction.
        See \eqref{eq:polar_map} for an explicit form of the geometric mapping that we use.
    }\label{fig:polar_map}
\end{figure}

Let $\mbf{F} : \qDomain \rightarrow \polarDomain$ be a sufficiently smooth polar geometric map which takes the bottom edge, $\cV = 0$, and maps it to a single point, called the \emph{polar point} or, simply, the \emph{pole}; see Figure \ref{fig:polar_map}.
The Jacobian of such a mapping degenerates at the pole but we assume that it is invertible away from the pole.
In particular, we use the following spline mapping as a standard choice for $\mbf{F}$; this could be customized for specific applications.
With control points $\mbf{F}_{ij} \in \RR^2$, $i = 1, \dots, \pndofU$, $j = 1, \dots, \ndofV$, such that $\mbf{F}_{11} = \mbf{F}_{21} = \cdots = \mbf{F}_{\pndofU 1}$, we choose the polar mapping $\mbf{F}$ as
\begin{equation}\label{eq:polar_map}
    \begin{split}
        \polarDomain \ni (\cS, \cT) = \mbf{F}(\cU, \cV) &:= \left[F_\cU(\cU, \cV), F_\cV(\cU, \cV)\right]
        := \sum_{i=1}^{\pndofU}\sum_{j=1}^{\ndofV} \mbf{F}_{ij} \bsp^{\per}_{i,\degU}(\cU) \bsp_{j,\degV}(\cV)\;.
    \end{split}
\end{equation}
In particular, a standard choice for the control points \cite{Toshniwal:2017} could be $\mbf{F}_{ij} := \left( \rho_j\cos(\theta_i),\rho_j\sin(\theta_i) \right) \in \RR^2$, where
\begin{equation}\label{eq:control-points}
    \begin{split}
        \theta_i := 2\pi + \dfrac{(1-2i)\pi}{\pndofU}\;,\qquad
        \rho_j := \dfrac{j-1}{\ndofV-1}\;.
    \end{split}
\end{equation}

\begin{remark}
    This specific choice of the mapping means that $(0,0) \in \polarDomain$, the image of $\cV = 0$, is the polar point.
    While we only consider a single polar point in this work, the more general setting of double polar points from \cite{Toshniwal:2021} can be handled analogously.
\end{remark}

\subsection{Jets, local closedness/exactness, and reduced complexes at the pole}

To characterize the properties of the spline complexes in the following sections, we introduce three new definitions -- jets at the pole, local closedness/exactness at the pole, and reduced complex at the pole.
Thereafter, we end this subsection with a technical result that will be useful in proving the exactness of the polar spline complex in Theorem \ref{thm:polar-splines} below, as well as the exactness of the hierarchical polar spline complex in \cref{sec:exactness-polar}.

\begin{definition}[$r$-jets at the pole]
    Consider a function $f : \tpDomain \rightarrow \RR$ which is $r$-times continuously differentiable w.r.t. $\cV$ at $\cV = 0$.
    Then, we define its $r$-jet, $r \in \NN$, at the pole as
    \begin{equation*}
        \jet{r}{f} := \left(
            f\big|_{\cV = 0}\;,\; \dfrac{\partial f}{\partial \cV}\big|_{\cV = 0}\;,\; \dots\;,\;\dfrac{\partial^r f}{\partial \cV^r}\big|_{\cV = 0}
        \right)\;.
    \end{equation*}
    If all components of the $r$-jet are identically zero, we will say $\jet{r}{f} = 0$.
\end{definition}
\begin{definition}[Locally closed and exact $1$-forms]\label{def:locally-closed-exact}
    Consider a complex $\complex{V}$ of forms defined on $\tpDomain$.
    Consider a sufficiently regular $1$-form $f = f_\cU\;\dcU + f_\cV\;\dcV \in \vSpace{V}^1$.
    We say that $f$ is locally closed at the pole if
    \begin{equation*}
        \jet{0}{\dfrac{\partial f_\cV}{\partial \cU}} = \jet{0}{\dfrac{\partial f_\cU}{\partial \cV}}\;.
    \end{equation*}
    We say that $f$ is locally exact at the pole if it is locally closed and if there exists a $0$-form $g \in \vSpace{V}^0$ such that
    \begin{equation*}
        \begin{split}
            \jet{1}{f_\cU} &= \jet{1}{\dfrac{\partial g}{\partial \cU}}\;,\\
            \jet{0}{f_\cV} &= \jet{0}{\dfrac{\partial g}{\partial \cV}}\;.
        \end{split}
    \end{equation*}
\end{definition}
\begin{definition}[Reduced complex at the pole]\label{def:reduced-complex}
    We say that a complex $\complex{V}$ of forms (with sufficient regularity) on $\tpDomain$ is reduced at the pole if the spaces in it can be decomposed as:
    \begin{equation}
        \begin{split}
            \vSpace{V}^0 &= \RR \oplus g(\cV)\vSpace{V}_\cU^0 \oplus \check{\vSpace{V}}^0\;,\\
            \vSpace{V}^1 &= \left(g(\cV)\vSpace{V}_\cU^1 \oplus \check{\vSpace{V}}^{11}\right)\dcU
                            + \left(\vSpace{V}_\cU^0 1_\cV \oplus \check{\vSpace{V}}^{12}\right)\dcV\;,\\
            \vSpace{V}^2 &= \left(\vSpace{V}_\cU^1 1_\cV \oplus \check{\vSpace{V}}^2\right)\dcU \wedge \dcV\;,
        \end{split}
    \end{equation}
    where:
    \begin{itemize}
        \item $g$ is a $C^1$-smooth function of $\cV$ such that $g(0) = 0$ and $g'(0) \neq 0$;
        \item $\vSpace{V}_\cU^0$ and $\vSpace{V}_\cU^1$ are univariate spaces in $\cU$ such that $\frac{d\vSpace{V}_\cU^0}{\dcU} \subset \vSpace{V}_\cU^1$;
        \item $1_\cV$ is the space of functions constant in $\cV$;
        \item $f \in \check{\vSpace{V}}^0$ or $f \in \check{\vSpace{V}}^{11}$ implies that $\jet{1}{f} = 0$;
        \item $f \in \check{\vSpace{V}}^{12}$ or $f \in \check{\vSpace{V}}^2$ implies that $\jet{0}{f} = 0$.
    \end{itemize}
\end{definition}

\begin{lemma}\label{lem:reduced-complex}
    Consider a complex $\complex{V}$ which is reduced at the pole.
    Then, the following claims hold.
    \begin{itemize}
        \item If $f \in \vSpace{V}^1$ is locally closed at the pole, then it is locally exact at the pole.
        \item Consider a subcomplex $\complex{W} \subset \complex{V}$ and assume that $H^2(\complex{V}) = 0$. Then, $H^2(\complex{W}) = 0$ if the following conditions hold:
        \begin{itemize}
            \item $\vSpace{V}^1 \ni f = f_\cU\;\dcU + f_\cV\;\dcV$: $\jet{1}{f_\cU} = 0 = \jet{0}{f_\cV} \Longrightarrow f \in \vSpace{W}^1$;
            \item $\vSpace{W}^2 = \left\{f = f_{\cU\cV}\;\dcU\wedge\dcV \in \vSpace{V}^2 : \jet{0}{f_{\cU\cV}} = 0\right\}$.
        \end{itemize}
        In particular, the above implies that every $1$-form in $\vSpace{W}^1$ is locally exact at the pole.
    \end{itemize}
\end{lemma}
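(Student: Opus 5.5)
For the first claim, I would work directly with the decomposition in \cref{def:reduced-complex}. Write $f = f_\cU\,\dcU + f_\cV\,\dcV$ with
\[
f_\cU = g(\cV)a(\cU) + \check f_{11},\qquad f_\cV = b(\cU) + \check f_{12},
\]
where $a \in \vSpace{V}_\cU^1$, $b \in \vSpace{V}_\cU^0$ (viewed as constant in $\cV$), $\jet{1}{\check f_{11}} = 0$ and $\jet{0}{\check f_{12}} = 0$. The relevant jets are then:
\begin{itemize}
  \item $\jet{0}{\partial_\cU f_\cV} = b'(\cU)$;
  \item $\jet{0}{\partial_\cV f_\cU} = g'(0)a(\cU)$, since $g(0)=0$ and $\jet{1}{\check f_{11}}=0$;
  \item $\jet{1}{f_\cU} = (0,\, g'(0)a)$;
  \item $\jet{0}{f_\cV} = b$.
\end{itemize}
Local closedness therefore reads $b' = g'(0)a$. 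I would choose the candidate $0$-form $h := g(\cV)b(\cU)/g'(0) \in g(\cV)\vSpace{V}_\cU^0 \subset \vSpace{V}^0$. Then $\jet{0}{\partial_\cV h} = b = \jet{0}{f_\cV}$ and $\jet{1}{\partial_\cU h} = (0,\, b') = (0,\, g'(0)a) = \jet{1}{f_\cU}$. This is exactly local exactness in the sense of \cref{def:locally-closed-exact}.

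For the second claim, take $\omega = w\,\dcU\wedge\dcV \in \vSpace{W}^2$, so $\jet{0}{w}=0$. Since $H^2(\complex{V})=0$, there is $f\in\vSpace{V}^1$ with $df=\omega$. The aim is to correct $f$ by a closed form of $\vSpace{V}^1$ until it satisfies the hypothesis $\jet{1}{f_\cU}=0=\jet{0}{f_\cV}$; the first bullet of the statement then puts it in $\vSpace{W}^1$.
\begin{enumerate}
  \item Because $w = \partial_\cU f_\cV - \partial_\cV f_\cU$ and $\jet{0}{w}=0$, the form $f$ is locally closed at the pole.
  \item By the first claim there is $h\in\vSpace{V}^0$ matching the jets of $f$ as above. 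Set $\tilde f := f - dh$. Then $d\tilde f = \omega$, $\jet{1}{\tilde f_\cU}=0$ and $\jet{0}{\tilde f_\cV}=0$.
  \item Hence $\tilde f\in\vSpace{W}^1$, and $\omega = d\tilde f$ is exact in $\complex{W}$, which gives $H^2(\complex{W})=0$.
\end{enumerate}

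For the final remark (every $1$-form in $\vSpace{W}^1$ is locally exact), I would first show that every $f\in\vSpace{W}^1$ is locally closed. Since $\complex{W}$ is a subcomplex, $df\in\vSpace{W}^2$, and the characterization of $\vSpace{W}^2$ gives $\jet{0}{\partial_\cU f_\cV - \partial_\cV f_\cU}=0$. Local exactness then follows from the first claim applied in $\complex{V}$; the $0$-form $h$ lies in $\vSpace{V}^0$, which is what \cref{def:locally-closed-exact} requires relative to $\complex{V}$.

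The step I expect to be the main obstacle is the bookkeeping in the first claim. Two points need care. First, I must verify that the components of $f$ have exactly the jets listed above, using $g(0)=0$ and $\jet{1}{\check f_{11}}=0$. Second, I must confirm that $h$ genuinely lies in $\vSpace{V}^0$, which uses the summand $g(\cV)\vSpace{V}_\cU^0$ and the hypothesis $g'(0)\neq 0$. A further subtlety is whether the final remark intends the $0$-form to come from $\vSpace{W}^0$ rather than $\vSpace{V}^0$. I would read it relative to $\complex{V}$, as above. If a $\vSpace{W}$-version is needed, I would note that $\tilde f\in\vSpace{W}^1$ already isolates the jet content in $dh$ and check this directly.
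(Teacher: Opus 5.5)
Your proposal is correct and follows the paper's proof essentially verbatim. For the first claim you use the same primitive $h = g(\cV)b(\cU)/g'(0)$ from the summand $g(\cV)\vSpace{V}_\cU^0$. For the second claim your correction $f - dh$ is exactly the paper's choice of a representative of $f$ in $\vSpace{V}^1/d\vSpace{V}^0$ with vanishing jets.

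Your check that $d\vSpace{W}^1\subset\vSpace{W}^2$ forces local closedness is right. So is your reading of the final remark with the $0$-form taken in $\vSpace{V}^0$. A $\vSpace{W}^0$-version cannot hold under these hypotheses alone (e.g.\ take $\vSpace{W}^0=\{0\}$), so the hedge in your last sentence is unnecessary.
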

\begin{proof}
    Let $f \in \vSpace{V}^1$ be of the form:
    \begin{equation*}
        f = \left(g(\cV)f_{1}(\cU) + \check{f}_{2}(\cU,\cV)\right)\dcU
            + \left(f_{3}(\cU) + \check{f}_{4}(\cU,\cV)\right)\dcV\;,
    \end{equation*}
    with $f_1 \in \vSpace{V}_\cU^1$, $\check{f}_{2} \in \check{\vSpace{V}}^{11}$, $f_3 \in \vSpace{V}_\cU^0$, and $\check{f}_{4} \in \check{\vSpace{V}}^2$.
    Then, local closedness of $f$ implies:
    \begin{equation*}
        \frac{\partial f_\cU}{\partial\cV}\big|_{\cV=0} = \frac{\partial f_\cV}{\partial\cU}\big|_{\cV=0}
        \Longleftrightarrow 
        g'(0)f_1(\cU) = f_3'(\cU)\;.
    \end{equation*}
    Then, consider the $0$-form $F := \frac{g(\cV)}{g'(0)}f_3(\cU) \in \vSpace{V}^0$.
    It is easy to verify that:
    \begin{equation*}
        \begin{split}
            \jet{1}{\frac{\partial F}{\partial \cU}} &= \left( 0, f_3'(\cU) \right) = \left( 0, g'(0)f_1(\cU) \right) = \jet{1}{f_\cU}\;,\\
            \jet{0}{\frac{\partial F}{\partial \cV}} &= \left( f_3(\cU) \right) = \jet{0}{f_\cV}\;.
        \end{split}
    \end{equation*}
    Thus, $f$ is locally exact at the pole.

    Next, consider an arbitrary two-form $f = f_{\cU\cV} \dcU \wedge \dcV \in \vSpace{W}^2 \subset \vSpace{V}^2$.
    Then, since $H^2(\complex{V}) = 0$, we know that $f = dg$ for some $1$-form $g \in \vSpace{V}^1$.
    Since $dg \in \vSpace{W}^2$, we have $\jet{0}{dg} = 0$ and, thus, $g$ is locally closed at the pole.
    Therefore, it must be locally exact at the pole by the first part of this proof.
    Then, considering $g$ in the quotient space $\vSpace{V}^1 / d\vSpace{V}^0$, 
    we can find a representative $\overline{g} = \overline{g}_\cU\;\dcU + \overline{g}_\cV\;\dcV \in \vSpace{V}^1$ such that
    \begin{equation*}
        f = dg = d\overline{g}\;,\qquad
        \jet{1}{\overline{g}_\cU} = 0 = \jet{0}{\overline{g}_\cV} = 0\;.
    \end{equation*}
    Then, this must mean that $\overline{g} \in \vSpace{W}^1$, thus completing the proof.
\end{proof}

\subsection{B-spline complex on a polar domain}
We now define our first spline complex on $\polarDomain$.
First, we introduce some notation that will be used throughout the rest of this paper.
Given an a $q$-dimensional space $\vSpace{X}$, a basis $\basis{X} = [X_1, \dots, X_q]$ for it, and a full-rank matrix $\mat{D}$ of size $r \times q$, $r \leq q$, we will define a subspace $\vSpace{Y} \subset \vSpace{X}$ as the space spanned by the basis functions in $\basis{Y} = [Y_1, \dots, Y_r]$, where
\begin{equation}
    \basis{Y} := \mat{D}\basis{X}\;.
\end{equation}
That is, the $i$-th basis function in $\basis{Y}$ is defined as $Y_i := \sum_{j=1}^{q} \mat{D}_{ij}X_j$.
With abuse of notation, we will equivalently say that $\vSpace{Y} := \mat{D}\vSpace{X}$.

Let us now define appropriate subspaces of the periodic B-spline complex from Section \ref{sec:preliminaries-bivariate-bsplines} on $\tpDomain$.
In particular, we define the spaces as:
\begin{equation}
    \bspS^{k,\tint} := \mat{E}^{k,\tint}\tpF{k,\per}{\pkntsU,\kntsV}\;,\qquad k = 0, 1, 2\;,
\end{equation}
where $\mat{E}^{k,\tint}$ are the extraction operators defined in Appendix \ref{app:intermediate-extraction}.
With this construction, 0-form B-splines that do not vanish at $\cV=0$ are summed into a single function, 1-forms with non-vanishing trace at $\cV=0$ are removed, while the space of 2-forms is unchanged.
The basis associated to the space $\bspS^{k,\tint}$ defined above will be denoted as:
\begin{equation}
    \bspB^{k,\tint} = \left[ \bsp^{k,\tint}_i~:~i = 1, \dots, n^{k,\tint} \right] := \mat{E}^{k,\tint}\tpB{k,\per}{\kntsU,\kntsV}\;,\qquad k = 0, 1, 2\;,
\end{equation}
where $n^{k,\tint} = \dim(\bspS^{k,\tint})$ have been defined in Appendix \ref{app:intermediate-extraction}.
From the local linear independence of B-splines, it follows that the basis functions in $\bspB^{k,\tint}$ are also locally linearly independent.

\begin{lemma}\label{lem:local-linear-independence-intermediate}
    The basis functions in $\bspB^{k,\tint}$, $k = 0, 1, 2$, are locally linearly independent in $\tpDomain$.
\end{lemma}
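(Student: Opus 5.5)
The plan is to reduce local linear independence of $\bspB^{k,\tint}$ to that of the periodic tensor-product B-splines $\tpB{k,\per}{\pkntsU,\kntsV}$. That property follows from the univariate result in the theorem of \cref{sec:preliminaries-univariate-bsplines}, since products of locally linearly independent univariate bases are locally linearly independent on rectangles. Fix an open set $U\subset\tpDomain$; by restricting further we may take $U$ inside a single (periodic) element. We must show: if $\sum_i c_i \bsp^{k,\tint}_i = 0$ on $U$, then $c_i=0$ for every $i$ whose support meets $U$.

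The argument uses the structure of $\mat{E}^{k,\tint}$ from Appendix~\ref{app:intermediate-extraction}. For $k=2$ the extraction is the identity, so the claim is just local linear independence of tensor-product B-splines.

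For $k=1$ the extraction only deletes basis functions. Concretely, it removes the $\dcU$-components $\bsp^{\per}_{i}\bsp_{1,\degV}\,\dcU$ with non-vanishing trace at $\cV=0$. Each remaining function is then a single tensor-product $1$-form basis function in one of the two components. The $\dcU$ and $\dcV$ components can be treated separately, since the vanishing of a $1$-form means both coefficient functions vanish. The result therefore follows from local linear independence in each component space.

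For $k=0$, $\mat{E}^{0,\tint}$ keeps all $\bsp^\per_i\bsp_{j,\degV}$ with $j\ge2$ unchanged. It replaces the $j=1$ row by the single function $\bsp_{1,\degV}(\cV)=\sum_i \bsp^\per_i(\cU)\bsp_{1,\degV}(\cV)$, using the partition of unity. Write a vanishing combination on $U$ as $c_0\sum_i \bsp^\per_i\bsp_{1,\degV} + \sum_{i,j\ge2} c_{ij}\bsp^\per_i\bsp_{j,\degV}$. This is a combination of tensor-product B-splines with coefficients $c_0$ for every $(i,1)$ and $c_{ij}$ otherwise. By local linear independence of the tensor-product basis, every coefficient of a function active on $U$ vanishes. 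If the merged function is active on $U$, some $\bsp^\per_i\bsp_{1,\degV}$ is active on $U$, which forces $c_0=0$. The $c_{ij}$ of active functions vanish directly.

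The only delicate point is ensuring that, in each case, every rows of the extraction matrix that is active on $U$ has a nonzero entry on some tensor-product function active on $U$. Equivalently, the supports of distinct rows' tensor-product constituents are disjoint sets of indices, and a row's support is the union of its constituents' supports. This holds because the extraction rows have disjoint nonnegative supports in the B-spline index set. I expect this check to be routine, with the $k=0$ merging being the only nontrivial case. If the appendix's $\mat{E}^{1,\tint}$ also combines functions rather than merely deleting them, the same disjoint-index argument applies verbatim.
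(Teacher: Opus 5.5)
Your proposal is correct and follows essentially the same argument as the paper. For $k=1,2$ you note that $\mat{E}^{k,\tint}$ is either the identity or only deletes basis functions, so the result reduces to local linear independence of the periodic tensor-product B-spline forms. For $k=0$ you expand a vanishing combination in the tensor-product basis, so that every active coefficient vanishes and the merged coefficient is forced to zero whenever some $\bsp^\per_{i,\degU}\bsp_{1,\degV}$ is active. The paper's extra remark that only elements touching $\cV=0$ need to be checked is covered implicitly by your observation that the rows of $\mat{E}^{0,\tint}$ are nonnegative with disjoint supports.
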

\begin{proof}
    The basis functions in $\bspB^{k,\tint}$, $k = 1, 2$, are simply a subset of the B-spline basis functions in $\tpB{k,\per}{\kntsU,\kntsV}$, $k = 1, 2$ (see \cref{app:intermediate-extraction}) and are thus locally linearly independent.
    By the same reasoning, for $k = 0$, the only elements on which we might lose local linear independence are those that lie in the support of the first basis function,
    \begin{equation*}
        \bsp^{0,\tint}_{1}(\cU,\cV) = \sum_{i=1}^{\pndofU} \bsp^{\per}_{i,\degU}(\cU) \bsp_{1,\degV}(\cV)\;,
    \end{equation*}
    In other words, we only need to consider the elements that contain the bottom boundary, $\cV = 0$.
    Consider an arbitrary spline $f = \sum_{i} f_i^\tint \bsp^{0,\tint}_{i}$ and re-express it in terms of B-splines as:
    \begin{equation*}
        f(\cU,\cV) = f_1^\tint\sum_{i=1}^{\pndofU}\bsp^{\per}_{i,\degU}(\cU) \bsp_{1,\degV}(\cV) + \sum_{i=1}^{\pndofU}\sum_{j=2}^{\ndofV} f_{ij} \bsp^{\per}_{i,\degU}(\cU) \bsp_{j,\degV}(\cV)\;,
    \end{equation*}
    where $f_{ij} := f^\tint_{i + (j-2)\pndofU + 1 }$.
    Then, by local linear independence of B-splines, if $f$ is identically zero on any element that contains the bottom boundary, then $f_1^\tint = 0$ and $f_{ij} = 0$ if $\bsp^{\per}_{i,\degU}\bsp_{j,\degV}$ is non-zero on that element.
    This completes the proof.
\end{proof}

These spaces also form the following complex on $\tpDomain$:
\begin{equation}
    \begin{tikzcd}
        \complex{\bsp}^{,\tint}~:~ \bspS^{0,\tint} \arrow{r}{d} & \bspS^{1,\tint} \arrow{r}{d} & \bspS^{2,\tint}\;,
    \end{tikzcd}
\end{equation}
and it can be easily shown that it is exact and reduced at the pole; we note this result in the following theorem.
\begin{theorem}\label{thm:intermediate-complex}
    The complex $\complex{\bsp}^{,\tint}$ is reduced at the pole.
    Moreover, we have:
    $H^0(\complex{{\bsp}}^{,\tint}) \isomorphic \RR$, $H^1(\complex{{\bsp}}^{,\tint}) = 0$, and $H^2(\complex{{\bsp}}^{,\tint}) = 0$.
\end{theorem}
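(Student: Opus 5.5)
The plan is to make the intermediate spaces completely explicit and read off both the reduced structure and the cohomology from them. From the description of $\mat{E}^{k,\tint}$ (\cref{app:intermediate-extraction}), $\bspS^{0,\tint}$ is spanned by $\bsp^{0,\tint}_1=\sum_i \bsp^\per_{i,\degU}\bsp_{1,\degV} = \bsp_{1,\degV}(\cV)$ (partition of unity in $\cU$) together with all $\bsp^\per_{i,\degU}\bsp_{j,\degV}$, $j\ge 2$. The $\dcU$-component of $\bspS^{1,\tint}$ consists of the functions in $\bspS[d\pkntsU,\kntsV]$ with vanishing trace at $\cV=0$, i.e.\ those with coefficients for $j\ge 2$. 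The $\dcV$-component is all of $\bspS[\pkntsU,d\kntsV]$, and $\bspS^{2,\tint}=\tpF{2,\per}{\pkntsU,\kntsV}$.

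\textbf{Reduced structure.} I will use the end-point properties of B-splines, where only $\bsp_{1,\degV},\bsp_{2,\degV}$ have a nonzero first derivative at $0$. Take $g:=\bsp_{2,\degV}$, so $g(0)=0$ and $g'(0)\neq 0$; take $\vSpace{V}^0_\cU=\bspS[\pkntsU]$ and $\vSpace{V}^1_\cU=\bspS[d\pkntsU]$, so that $d\vSpace{V}^0_\cU\subset\vSpace{V}^1_\cU$. The constant $\RR$ is $\bsp_{1,\degV}+g$ summed over $i$, since $\sum_j\bsp_{j,\degV}=1$. To avoid that bookkeeping I will instead take $\RR\ni 1=\sum_{i,j}\bsp^\per_{i,\degU}\bsp_{j,\degV}$ and let $\check{\vSpace{V}}^0$ be the span of $\bsp^\per_{i,\degU}\bsp_{j,\degV}$ with $j\ge3$, whose $1$-jets vanish. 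A dimension count and the triangular jet structure then give the direct sum. The remaining splittings are analogous: $\check{\vSpace{V}}^{11}$ is spanned by $j\ge3$ in $\bspS[d\pkntsU,\kntsV]$; the $\dcV$ component splits as $\bsp_{1,\degV-1}$-multiples (with $\bsp_{1,\degV-1}(0)\ne0$) plus $j\ge2$. Here the factor $1_\cV$ should be read as ``the part determined by the $0$-jet''. I expect I may need to replace $\bsp_{1,d\kntsV}$ by the constant $1=\sum_j \bsp_{j,d\kntsV}$, and similarly in the $2$-forms, so that the complements have vanishing $0$-jet. This is a change of basis inside the same space, so it is legitimate.

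\textbf{Cohomology.} $H^0\isomorphic\RR$ holds because $\ker d$ consists of constants, which lie in $\bspS^{0,\tint}$. For $H^2=0$, every $2$-form is $d$ of some $1$-form in the periodic complex by \cref{thm:bspline-cohomology-periodic}, and in fact by a $\dcV$-only $1$-form: integrate in $\cV$ from $0$, i.e.\ $\partial_\cV\bspS[\pkntsU,\kntsV]\supseteq\bspS[\pkntsU,d\kntsV]$ univariately. Since the $\dcV$-component of $\bspS^{1,\tint}$ is unrestricted, the preimage lies in $\bspS^{1,\tint}$.

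For $H^1=0$, take a closed $f\in\bspS^{1,\tint}$. Periodically, $H^1$ is one-dimensional, generated by the class of a $\dcU$-form constant in $\cV$. Since the $\dcU$-component of $f$ vanishes at $\cV=0$, the period $\oint f$ along the bottom edge is zero, so $f=dh$ with $h\in\tpF{0,\per}{\pkntsU,\kntsV}$. Then $\partial_\cU h=f_\cU=0$ at $\cV=0$, so $h|_{\cV=0}$ is constant. Subtracting that constant makes the trace of $h$ vanish, so $h\in\bspS^{0,\tint}$.

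The main obstacle is justifying the period argument rigorously, namely that the periodic cohomology class is detected by the integral over $\cV=0$ (a path-independence argument, using that $f$ is closed). I would handle it with the explicit tensor-product decomposition of the periodic complex, the $1$D periodic cohomology being detected by the mean.
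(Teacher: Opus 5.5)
Your treatment of the reduced structure is the paper's own construction. You use $g=\bsp_{2,\degV}$, $\vSpace{V}^0_\cU=\bspS[\pkntsU]$ and $\vSpace{V}^1_\cU=\bspS[d\pkntsU]$. The complements are spanned by the B-splines with $j\ge 3$ where a vanishing $1$-jet is needed, and with $j\ge 2$ where a vanishing $0$-jet is needed. The factor $1_\cV$ comes from the partition of unity of $\bspB[d\kntsV]$. Your proposed change of basis to the constant is exactly what \cref{def:reduced-complex} requires, because $1_\cV$ literally means functions constant in $\cV$, not ``the part determined by the $0$-jet''. Your first description of the constant as ``$\bsp_{1,\degV}+g$ summed over $i$'' is wrong, but you discard it. Your $H^0$ and $H^1$ arguments are sound; the paper omits the exactness proof altogether.

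The step that fails is $H^2=0$. A $\dcV$-only $1$-form satisfies $d(h\,\dcV)=\partial_\cU h\;\dcU\wedge\dcV$. Since $h$ is periodic in $\cU$, $\partial_\cU h$ has zero $\cU$-mean for every $\cV$. For example, the constant $2$-form $1\;\dcU\wedge\dcV\in\bspS^{2,\tint}$ (partition of unity) has no $\dcV$-only preimage. Integrating in $\cV$ from $0$ actually produces the $\dcU$-component, $\omega:=-\left(\int_0^\cV f(\cU,t)\,dt\right)\dcU$. This satisfies $d\omega=f\;\dcU\wedge\dcV$, and its coefficient lies in $\bspS[d\pkntsU,\kntsV]$ because $\int_0^\cV$ maps $\bspS[d\kntsV]$ into $\bspS[\kntsV]$. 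So $\omega\in\bspS^{1,\tint}$ holds not because the $\dcV$-component is unrestricted. It holds because the $\dcU$-component, which is exactly the restricted one, vanishes at $\cV=0$ by the choice of lower limit.

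The same integration from the pole removes what you call the main obstacle for $H^1$. For closed $f\in\bspS^{1,\tint}$, set $h:=\int_0^\cV f_\cV(\cU,t)\,dt\in\bspS[\pkntsU,\kntsV]$. Then $h(\cdot,0)=0$, so $h\in\bspS^{0,\tint}$, and $\partial_\cV h=f_\cV$. Closedness gives $\partial_\cU h=\int_0^\cV\partial_\cU f_\cV\,dt=\int_0^\cV\partial_t f_\cU\,dt=f_\cU-f_\cU(\cdot,0)=f_\cU$, since $f_\cU(\cdot,0)=0$. No appeal to \cref{thm:bspline-cohomology-periodic} is needed.

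If you keep your route, the period step closes in one line. The bottom-edge period is a linear functional on closed periodic $1$-forms that vanishes on exact ones and equals $1$ on $1\;\dcU$. Since $\dim H^1(\complex{\bsp}^{,\per})=1$, it induces an isomorphism $H^1(\complex{\bsp}^{,\per})\to\RR$, so zero period implies exactness.
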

\begin{proof}
    We omit the proof of the complex's exactness since it follows from its similarity to the periodic B-spline complex (see \Cref{thm:bspline-cohomology-periodic}).
    Instead, since Definition \ref{def:reduced-complex} is a bit technical, we focus here on proving that the complex is reduced at the pole.
    We do so constructively, thus exemplifying the notation introduced in Definition \ref{def:reduced-complex}.
    
    First, choose $g(\cV) = \bsp_{2,\degV}(\cV)$, and define the following univariate spaces:
    \begin{equation*}
        \begin{split}
            \vSpace{V}^0_\cU &= \bspS[\pkntsU]\;,\qquad
            \vSpace{V}^1_\cU = \bspS[d\pkntsU]\;.
        \end{split}
    \end{equation*}
    The bivariate spaces are accordingly defined as:
    \begin{equation*}
        \begin{split}
            \check{\vSpace{V}}^0 &:= \spaceSpan{B~:~B \in \bspB^{0,\tint}
            ~~\text{and}~~
            \jet{1}{B} = 0}\;,\\
            \check{\vSpace{V}}^{11} &:= \spaceSpan{B~:~B\;\dcU \in \bspB^{1,\tint}
            ~~\text{and}~~
            \jet{1}{B} = 0}\;,\\
            \check{\vSpace{V}}^{12} &:= \spaceSpan{B~:~B\;\dcV \in \bspB^{1,\tint}
            ~~\text{and}~~
            \jet{0}{B} = 0}\;,\\
            \check{\vSpace{V}}^2 &:= \spaceSpan{B~:~B\;\dcU\wedge\dcV \in \bspB^{2,\tint}
            ~~\text{and}~~
            \jet{0}{B} = 0}\;.
        \end{split}
    \end{equation*}
    It can be checked that these spaces satisfy the properties in Definition \ref{def:reduced-complex} and, moreover, that the spaces $\bspS^{k,\tint}$, $k = 0, 1, 2$, can be decomposed as in Definition \ref{def:reduced-complex} using the above spaces.
    For instance, consider $f \in \bspS^{0,\tint}$, $f = \sum_{i} f_i^\tint \bsp^{0,\tint}_i$
    Then, we can equivalently express it as follows:
    \begin{equation*}
        \begin{split}
            f &= \sum_{i} f_i^\tint \bsp^{0,\tint}_i\;\\
        &= f_1^\tint\sum_{i=1}^{\pndofU}\sum_{j = 1}^{\ndofV}\bsp^{\per}_{i,\degU}\bsp_{j,\degV}
        + \left(\sum_{i=1}^{\pndofU}(f_{i2}-f_1^\tint)\bsp_{i,\degU}^{\per}\right)\bsp_{2,\degV}
        + \sum_{i=1}^{\pndofU}\sum_{j=3}^{\ndofV} (f_{ij}-f_1^\tint) \bsp^{\per}_{i,\degU} \bsp_{j,\degV}\;,
        \end{split}
    \end{equation*}
    where $f_{ij} := f^\tint_{i + (j-2)\pndofU + 1 }$.
    By partition of unity, we see that the first term is a constant function, the second term is a member of $g(\cV)\vSpace{V}_\cU^0$, and the third term is in $\check{\vSpace{V}}^0$.
    The spaces $\bspS^{1,\tint}$ and $\bspS^{2,\tint}$ can be similarly treated, thus completing the proof.
\end{proof}

\begin{remark}
    Note that, from this section onwards, we suppress the dependence of the spline spaces on $\{\pkntsU, \kntsV\}$ to emphasize their non-tensor-product structure and to simplify the notation.
\end{remark}

Using this complex, we could define an intermediate B-spline complex on $\polarDomain$ via pullbacks through the polar mapping $\mbf{F}$,
\begin{equation}
    \begin{tikzcd}
        \complex{\overline{\bsp}}^{,\tint}~:~ \overline{\bspS}^{0,\tint} \arrow{r}{d} & \overline{\bspS}^{1,\tint} \arrow{r}{d} & \overline{\bspS}^{2,\tint}\;,
    \end{tikzcd}
\end{equation}
where we define the spaces as
\begin{equation}\label{eq:intermediate-polar-spaces-physical}
    \overline{\bspS}^{k,\tint} := \left\{ f ~:~ \mbf{F}^* f  \in \bspS^{k,\tint} \right\}\;,\qquad k = 0, 1, 2\;,
\end{equation}
and $\mbf{F}^*$ is the pullback induced by the mapping $\mbf{F}$.
Note, however, that this intermediate complex $\complex{\overline{\bsp}}^{,\tint}$ is not a subcomplex of the $L^2$ de Rham complex on $\polarDomain$ because the spaces do not have sufficient regularity at the pole.
For instance, $2$-forms in $\overline{\bspS}^{2,\tint}$ will in general not be $L^2$ integrable on $\polarDomain$.
This is rectified in the following section.

\subsection{Polar spline complex}
We define polar spline form spaces $\polarS^k \subset \bspS^{k,\tint}$, $k = 0, 1, 2$, as appropriate subspaces that will ensure their required regularity on $\polarDomain$.
We do so by constructing matrices $\mat{E}^{k}$, $k = 0, 1, 2$, as shown in Appendix \ref{app:extraction}, and then using them to define $\polarF{k}$ as below,
\begin{equation}
    \begin{split}
        \polarF{k} &:= \mat{E}^{k}\bspS^{k,\tint}\;,\qquad
        \polarB^k := \mat{E}^{k}\bspB^{k,\tint}\;,\qquad
        k = 0, 1, 2\;.
    \end{split}
\end{equation}
It can be readily shown that these spaces form the following complex,
\begin{equation}
    \begin{tikzcd}
        {\complex{\polar}}~: & \polarF{0} \arrow{r}{d} & \polarF{1} \arrow{r}{d} & \polarF{2}
    \end{tikzcd}\;.
\end{equation}
We end this section with a theorem that summarizes all the properties of $\complex{\polar}$ and its spaces.

\begin{theorem}\label{thm:polar-splines}
    ~
    \begin{enumerate}[label=(\Alph*)]
        \item\label{item:cardinality} Denoting the basis functions in $\polarB^k$ as $\left[\polar^k_{i} : i = 1, \dots, n^k\right]$, we have:
        \begin{itemize}
            \item $n^0 = n^{0,\tint} - \pndofU + 2$,
            \item $n^1 = n^{1,\tint} - 2\pndofU + 2$,
            \item $n^2 = n^{2,\tint} - \pndofU$.
        \end{itemize}
        \item\label{item:nnz-basis-bottom} At the bottom boundary of $\tpDomain$, i.e., at $\cV = 0$, the following hold.
        \begin{itemize}
            \item Only $\polar^0_i$, $i = 1, 2, 3$, have non-zero values and first derivatives.
            \item Only $\polar^1_i = \polar^1_{i,\cU}\dcU + \polar^1_{i,\cV}\dcV$, $i = 1, 2$, are such that at least one of $\polar^1_{i,\cU}$ or $\polar^1_{i,\cV}$ has non-zero values.
            \item All $\polar^2_i = \polar^2_{i,\cU\cV}\dcU\wedge\dcV$ are such that $\polar^2_{i,\cU\cV}$ has zero values.
        \end{itemize}
        \item\label{item:linear-ind-bottom} At the bottom boundary of $\tpDomain$, i.e., at $\cV = 0$, the following hold.
        \begin{itemize}
            \item If $f = \sum_{i=1}^3 c_i \polar^0_i$, $i = 1, 2, 3$, has zero values and first derivatives, then all $c_i$ are zero.
            \item If $f = f_\cU\dcU + f_\cV\dcV = c_1 \polar^1_1 + c_2 \polar^1_2$ is such that both $f_\cU$ and $f_\cV$ have zero values, then $c_1 = c_2 = 0$.
        \end{itemize}
        \item\label{item:limited-influence} Except for the basis functions identified in \cref{item:nnz-basis-bottom}, every other basis function in $\polarB^k$ is identical to some tensor-product B-spline $k$-form in $\bspB^{k,\tint}$, $k = 0, 1, 2$. Moreover:
        \begin{itemize}
            \item $\bspS^{0,\tint} \ni f\;,\;\jet{1}{f} = 0 \Longrightarrow f \in \polarF{0}$,
            \item $\bspS^{1,\tint} \ni f = f_\cU\;\dcU + f_\cV\;\dcV$: $\jet{1}{f_\cU} = 0 = \jet{0}{f_\cV} \Longrightarrow f \in \polarF{1}$,
            \item $\polarF{2} = \left\{\bspS^{2,\tint} \ni f = f_{\cU\cV}\;\dcU\wedge\dcV : \jet{0}{f_{\cU\cV}} = 0\right\}$.
        \end{itemize}
        \item\label{item:local-linear-independence} The basis functions in $\polarB^k$, $k = 0, 1, 2$, are locally linearly independent.
        \item\label{item:local-exactness} Every $1$-form in $\polarF{1}$ is locally exact at the pole.
        \item\label{item:exactness}
        $H^0(\complex{\polar}) \isomorphic \RR$, $H^1(\complex{\polar}) = 0$, and $H^2(\complex{\polar}) = 0$.
    \end{enumerate}
\end{theorem}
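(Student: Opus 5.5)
The extraction matrices $\mat{E}^k$ are in the appendix we have not seen, so the plan first fixes the structure they have in the standard $C^1$ polar construction of \cite{Toshniwal:2021}. For $k=0$, the first $2\pndofU$ B-splines (rows $j=1,2$) are replaced by three functions $\polar^0_1,\polar^0_2,\polar^0_3$. Their coefficients on row $j=2$ are the barycentric coordinates of the control points $\mbf{F}_{i2}$ with respect to a triangle enclosing the first ring of control points; on row $j=1$ every coefficient equals $1/3$. All other $0$-form B-splines are kept unchanged.

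For $k=1$, the $\dcV$-components on row $j=1$ and the $\dcU$-components on row $j=2$ are replaced by two combinations $\polar^1_1,\polar^1_2$. These are obtained as $d\polar^0_2$ and $d\polar^0_3$, or equivalently as differences of them, so that the complex property holds. For $k=2$, the first row of $2$-form B-splines (row $j=1$ of $\bspB[d\pkntsU,d\kntsV]$) is removed.

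With this structure fixed, \ref{item:cardinality} is a count.
\begin{itemize}
\item $0$-forms: $n^{0,\tint}$ already merges row $1$ into a single function. We remove that function and the $\pndofU$ row-$2$ functions, and add three, giving $n^{0,\tint}-\pndofU+2$.
\item $1$-forms: we remove $2\pndofU$ functions and add two.
\item $2$-forms: we remove $\pndofU$.
\end{itemize}

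Statements \ref{item:nnz-basis-bottom}, \ref{item:linear-ind-bottom} and \ref{item:limited-influence} follow from the end-point property of univariate B-splines: only $\bsp_{1,\degV},\bsp_{2,\degV}$ have nonzero value or first derivative at $\cV=0$, and only $\bsp_{1,\degV-1}$ has nonzero value there. The jet of any function is therefore determined by its coefficients on rows $1,2$ (for $0$-forms) or on row $1$ (for $2$-forms and the $\dcV$-part of $1$-forms).

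For \ref{item:linear-ind-bottom}, I would write $\jet{1}{\sum c_i\polar^0_i}=0$ as a linear system in $c$. The value at $\cV=0$ gives $\sum c_i=0$. The derivative at $\cV=0$ gives $\sum_i c_i(\lambda_{i,\ell}-1/3)=0$ for every $\ell=1,\dots,\pndofU$. Because the ring control points span the plane, the affine map $c\mapsto\sum_i c_i\lambda_i$ is injective on $\{\sum c_i=0\}$, so $c=0$. The $1$-form case is analogous, using the two independent directions of $d\lambda$.

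For \ref{item:limited-influence}, let $f\in\bspS^{k,\tint}$ have vanishing jets. Then its coefficients on the affected rows vanish, so $f$ lies in the span of the untouched B-splines, which belong to $\polarF{k}$. Conversely, each $\polar^2_i$ has zero trace, which gives the stated description of $\polarF{2}$.

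For \ref{item:local-linear-independence}, I would mimic \cref{lem:local-linear-independence-intermediate}. On elements away from $\cV=0$ the basis functions are B-splines or sums of them, and the argument is as before. On an element touching $\cV=0$, suppose $f=\sum c_i\polar^k_i$ vanishes. Then its jet vanishes on that element's $\cU$-interval. Because of local linear independence of the univariate B-splines, knowing the row-$2$ coefficients on the few basis functions supported there should suffice to force $c_1=c_2=c_3=0$. After that, the remaining B-splines are locally independent.

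This is the step I expect to be the main obstacle. One needs that the barycentric coordinates restricted to the $\degU+1$ values of $\ell$ active on an element are still affinely independent, i.e. that the corresponding $\mbf{F}_{\ell 2}$ are not collinear. This holds for the standard control points when $\degU\ge2$, since any three distinct points on a circle are non-collinear. For a general mapping it must be assumed or checked.

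For \ref{item:local-exactness} and \ref{item:exactness} I would invoke \cref{lem:reduced-complex} with $\complex{V}=\complex{\bsp}^{,\tint}$, which is reduced at the pole and has $H^2=0$ by \cref{thm:intermediate-complex}, and with $\complex{W}=\complex{\polar}$. The two hypotheses of the lemma are exactly the last two items of \ref{item:limited-influence}. The lemma then gives $H^2(\complex{\polar})=0$ and local exactness of every $1$-form in $\polarF{1}$.

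$H^0(\complex{\polar})\isomorphic\RR$ holds because the constants lie in $\polarF{0}$ by partition of unity, and $H^0$ cannot exceed $H^0$ of $\complex{\bsp}^{,\tint}$.

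For $H^1(\complex{\polar})=0$, take a closed $f\in\polarF{1}$.
\begin{enumerate}
\item By exactness of the intermediate complex, $f=dg$ for some $g\in\bspS^{0,\tint}$.
\item Using local exactness, choose $h\in\polarF{0}$ such that $f-dh$ has vanishing jets. Then $d(g-h)$ has $\jet{1}{\cdot}$ of its $\cU$-part and $\jet{0}{\cdot}$ of its $\cV$-part equal to zero.
\item This forces $\jet{1}{g-h}$ to equal a constant, so after adding that constant $g-h$ lies in $\polarF{0}$ by \ref{item:limited-influence}.
\item Hence $f=d\bigl(h+(g-h)\bigr)$ with $h+(g-h)\in\polarF{0}$, which gives $H^1(\complex{\polar})=0$.
\end{enumerate}
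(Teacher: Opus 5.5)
Your treatment of (A)--(F), and of $H^0$ and $H^2$ in (G), follows the paper's line.
\begin{itemize}
\item The counts and jet statements come from the block structure of $\mat{E}^k$ and the end-point behaviour of B-splines.
\item (E) reduces, essentially as in the paper, to the full rank of the barycentric-coordinate submatrix of $\widehat{\mat{E}}^0$ for consecutive ring control points. This is where the choice of control points and $\degU\ge 2$ enter.
\item (F) and $H^2(\complex{\polar})=0$ come from \cref{lem:reduced-complex} with $\complex{V}=\complex{\bsp}^{,\tint}$.
\end{itemize}
The genuine difference is $H^1(\complex{\polar})=0$. The paper constructs no potential. With $\dim H^0=1$ and $H^2=0$ in hand, it reads off $\dim H^1(\complex{\polar})=1-n^0+n^1-n^2=1-n^{0,\tint}+n^{1,\tint}-n^{2,\tint}=0$ from the Euler characteristic. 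That is a one-liner given (A), but it hinges on the exact cardinalities and on $\mat{E}^k$ having full rank. You instead take a potential $g\in\bspS^{0,\tint}$ from $H^1(\complex{\bsp}^{,\tint})=0$ and correct it near the pole. Your route is constructive and needs no dimension bookkeeping. It also transfers almost verbatim to the hierarchical complex of \cref{sec:exactness-polar}, via \cref{cor:exactness-intermediate-complex}, \cref{lem:active-polar-basis} and \cref{cor:similarities-hb-hp}. There the paper instead has to count dimensions of the quotients $\hSpace{\bsp}^{k,\tint}/\hSpace{\polar}^k$ near the pole.

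Two steps need tightening.

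\emph{Step 2 of your $H^1$ argument.} ``Local exactness'' as you derived it from \cref{lem:reduced-complex} only yields the potential $\bsp_{2,\degV}(\cV)f_\cV(\cU,0)/\bsp_{2,\degV}'(0)\in\bspS^{0,\tint}$. If $h$ is merely in $\bspS^{0,\tint}$, step 4 just returns $g$ up to a constant, whose membership in $\polarF{0}$ is what you set out to prove. What actually gives $h\in\polarF{0}$ is the following. Modulo forms with vanishing jets, which lie in $\polarF{1}$ by (D), $\polarF{1}$ is spanned by $\polar^1_1,\polar^1_2$. By (C), $d$ maps $\mathrm{span}\{\polar^0_2-\polar^0_1,\polar^0_3-\polar^0_1\}$ injectively into this two-dimensional quotient, hence onto. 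Note also that $\polar^1_1,\polar^1_2$ are not literally $d\polar^0_2,d\polar^0_3$ as you wrote, since the latter also have row-$2$ $\dcV$ components. The two spans only agree modulo forms with vanishing jets.

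\emph{Part (E).} The elements with $\cV\in[\cV_{\degV+2},\cV_{\degV+3}]$ lie in the support of $\polar^0_1,\polar^0_2,\polar^0_3$ without touching $\cV=0$. On them these functions are barycentric-weighted combinations of the row-$2$ B-splines, not B-splines or sums of them, so the argument is not ``as before''. You need the non-collinearity of the active ring points directly, without the help of the row-$1$ equation $\sum_i c_i=0$. Similarly, for the $\dcU$-parts of $\polar^1_1,\polar^1_2$ you need two non-parallel consecutive edges $\mbf{F}_{j+1,2}-\mbf{F}_{j,2}$.
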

\begin{proof}
    Except for \cref{item:local-linear-independence} and \cref{item:exactness}, we omit the details of the proofs and refer the reader to \cite{Toshniwal:2021}.
    However, we do note that these follow from the construction of the matrices $\mat{E}^{k}$ (\cref{app:extraction}) and standard properties of B-splines such as local support, linear independence, and, in particular, their smoothness: for $0 \leq r \leq \degV$, the $r$-jet of the tensor-product B-spline $\bsp_{i,\degU}^\per\bsp_{j,\degV}$ is non-zero if and only if $1 \leq j \leq r+1$.
    In particular, \cref{item:local-exactness} follows from \cref{lem:reduced-complex}.

    \noindent
    \textbf{Proof of \cref{item:local-linear-independence}.}\\
		Since the basis functions in $\bspB^{k,\tint}$ are locally linearly independent, and they only differ from $\polarB^k$ on the elements close to the pole, we can focus on those elements. Let us consider any element in the support of the polar functions $\polar^0_1$, $\polar^0_2$, or $\polar^0_3$, and let us denote by $I$ and $I^\tint$ the respective sets of basis functions in $\polarB^k$ and $\bspB^{k,\tint}$ not vanishing on that element. From \cref{app:extraction} we have that
	    \begin{equation*}
			\begin{split}
				\begin{bmatrix}
					\polar^k_i
				\end{bmatrix}_{i \in {I}} = \mat{C}^k
				\begin{bmatrix}
					\bsp^{0,\tint}_i
				\end{bmatrix}_{i \in {I}^\tint}\;.
			\end{split}
		\end{equation*}
		We will prove that $\mat{C}^k$ has full rank, which implies  that the polar functions are linearly independent on the element.
		 
		For $k = 0$, by construction of $\bspB^{0,\tint}$, there exist indices $i_1, i_2, i_3 \in I^\tint$ such that the functions $\bsp^{0,\tint}_{i_1}$, $\bsp^{0,\tint}_{i_2}$, and $\bsp^{0,\tint}_{i_3}$ are respectively identical to some B-splines $\bsp^{\per}_{j,\degU}\bsp_{2,\degV}$, $\bsp^{\per}_{j+1,\degU}\bsp_{2,\degV}$ and $\bsp^{\per}_{j+2,\degU}\bsp_{2,\degV}$, for some index $j \in \{1, \ldots, \pndofU \}$. Then, up to permutations, we can express $\mat{C}^0$ in the following block-diagonal form:
		\begin{equation*}
			\mat{C}^0 = 
			\begin{bmatrix}
				\widehat{\mat{C}}^0 & \\
				& \mat{I}
			\end{bmatrix}\;,
		\end{equation*}
	   with $\widehat{\mat{C}}^0$ a submatrix of $\widehat{\mat{E}}^0$ (see \cref{app:extraction}) with three rows that contains (at least) non-vanishing columns corresponding to $\bsp^{0,\tint}_{i_1}$, $\bsp^{0,\tint}_{i_2}$, and $\bsp^{0,\tint}_{i_3}$.
	   Since any such submatrix has full rank (because of our choice of control points $\mbf{F}_{ij}$, see \eqref{eq:control-points}), we have that $\mat{C}^0$ is full-rank, thus completing the proof for $k = 0$.
	   The proofs for $k = 1$ and $k = 2$ follow from the same reasoning.

    \noindent
    \textbf{Proof of \cref{item:exactness}.}\\
    We now present an alternative proof of \cref{item:exactness} that is different from \cite{Toshniwal:2021}, since the same argumentation will be useful in later sections.
    Consider the short exact sequence of complexes
    \begin{equation*}
        \begin{tikzcd}
            0 \arrow{r} & \complex{\polar} \arrow{r}{} & \complex{\bsp}^{,\tint} \arrow{r}{} & \complex{\bsp}^{,\tint} / \complex{\polar}  \arrow{r} & 0\;,
        \end{tikzcd}
    \end{equation*}
    where the first map is the inclusion and the second map is the canonical projection.
    This induces the following long exact sequence in cohomology \cite{Hatcher:2002}:
    \begin{equation*}
        \begin{tikzcd}
        H^0(\complex{\polar}) \arrow[d] & H^1(\complex{\polar}) \arrow[d] & H^2(\complex{\polar}) \arrow[d] \\
        H^0(\complex{\bsp}^{,\tint}) \arrow[d] \ar[draw=none]{r}[name=X, anchor=center]{}  & H^1(\complex{\bsp}^{,\tint}) \arrow[d] \ar[draw=none]{r}[name=Y, anchor=center]{}  & H^2(\complex{\bsp}^{,\tint}) \arrow[d] \\
        H^0(\complex{\bsp}^{,\tint} / \complex{\polar}) \ar[rounded corners,
        to path={ -- ([yshift=-2ex]\tikztostart.south)
                  -| (X.center) \tikztonodes
                  |- ([yshift=2ex]\tikztotarget.north)
                  -- (\tikztotarget)}]{ruu}[at end]{} & H^1(\complex{\bsp}^{,\tint} / \complex{\polar}) \ar[rounded corners,
                  to path={ -- ([yshift=-2ex]\tikztostart.south)
                            -| (Y.center) \tikztonodes
                            |- ([yshift=2ex]\tikztotarget.north)
                            -- (\tikztotarget)}]{ruu}[at end]{} & H^2(\complex{\bsp}^{,\tint} / \complex{\polar})
        \end{tikzcd}
    \end{equation*}
    From \cref{thm:intermediate-complex}, we have that $H^1(\complex{\bsp}^{,\tint} / \complex{\polar}) \isomorphic H^2(\complex{\polar})$ and $H^2(\complex{\bsp}^{,\tint} / \complex{\polar}) = 0$.

    First, we note that it is clear that $H^0(\complex{\polar}) \isomorphic \RR$, because only constants are in the kernel of $d : \polarF{0} \rightarrow \polarF{1}$.
    Next, since $\complex{\bsp}^{,\tint}$ is reduced at the pole (\cref{thm:intermediate-complex}), and since $\complex{\polar} \subset \complex{\bsp}^{,\tint}$ satisfies the conditions of the second part of \cref{lem:reduced-complex} (by \cref{item:limited-influence,item:local-exactness} of this theorem), we have that $H^2(\complex{\polar}) = 0$.

    Finally, since we already know the dimensions of $H^0(\complex{\polar})$ and $H^2(\complex{\polar})$, as well as the dimensions of $\polarF{k}$, $k = 0, 1, 2$, we can use the Euler characteristic of the complex $\complex{\polar}$ to compute the dimension of $H^1(\complex{\polar})$ as follows:
    \begin{equation*}
        \begin{split}
            \sum_{k=0}^2 (-1)^k \dim(H^k(\complex{\polar})) &= \sum_{k=0}^2 (-1)^k \dim(\polarF{k})\;,\\
            \Longrightarrow \dim(H^1(\complex{\polar})) &= 1 - n^0 + n^1 - n^2 = 1 - n^{0,\tint} + n^{1,\tint} - n^{2,\tint} = 0\;.
        \end{split}
    \end{equation*}
\end{proof}

The polar spline de Rham complex on $\polarDomain$ is now defined as,
\begin{equation}
    \begin{tikzcd}
        \complex{\overline{\polar}}~:~ \overline{\polarS}^{0} \arrow{r}{d} & \overline{\polarS}^{1} \arrow{r}{d} & \overline{\polarS}^{2}\;,
    \end{tikzcd}
\end{equation}
where we define the spaces as
\begin{equation}
    \overline{\polarS}^{k} := \left\{ f ~:~ \mbf{F}^* f  \in \polarS^{k} \right\}\;,\qquad k = 0, 1, 2\;,
\end{equation}
with $\mbf{F}^*$ as in \cref{eq:intermediate-polar-spaces-physical}.
As shown in \cite{Toshniwal:2021}, this complex is an exact subcomplex of the $L^2$ de Rham complex on $\polarDomain$.

\begin{remark}
    Note that the polar spline spaces $\polarF{k}$, $k = 0, 1, 2$, can be equivalently defined as
    \begin{equation}
        \polarF{k} = \mat{E}^k\mat{E}^{k,\tint}\tpF{k,\per}{\pkntsU,\kntsV}\;,\qquad k = 0, 1, 2\;.
    \end{equation}
    The total extraction operators $\mat{E}^k\mat{E}^{k,\tint}$ are then identical to those defined in \cite{Toshniwal:2021}.
\end{remark}

%% file: tikz/scripts/polar_map_horiz.tex
\begin{tikzpicture}[scale=0.4,line cap=round, line join=round]
	\def\L{6}
	\def\S{-12}

	\begin{scope}
		\draw [step=\L/4, black, ultra thin] (0,0) grid (\L, \L);
		\draw [myRed, ultra thick] (0,0) -- (\L, 0);
		\draw [black, ultra thick] (0,0) -- (0, \L);
		\draw [black, ultra thick] (\L,0) -- (\L, \L);
		
		\node (a) at (\L, \L/2){};
		
		\draw [black, ultra thin] (\L/2-\S, \L/2) circle (\L/2);
		\draw [black, ultra thin] (\L/2-\S, \L/2) circle (\L/4);
		\draw [black, ultra thin] (\L/2-\S, \L/2) circle (\L/8);
		\draw [black, ultra thin] (\L/2-\S, \L/2) circle (3*\L/8);
		\draw [black, ultra thin] (\L/2-\S, 0) -- (\L/2-\S, \L);
		\draw [black, ultra thin] (\L-\S, \L/2) -- (\L/2-\S, \L/2);
		\draw [black, ultra thick] (\L/2-\S, \L/2) -- (-\S, \L/2);
		\fill[myRed] (\L/2-\S, \L/2) circle (5pt);
		
		\node (b) at (-\S, \L/2){};
		
		\path [draw, -latex'] ($(a)!0.25!(b)$) -- ($(a)!0.75!(b)$); 
	\end{scope}
\end{tikzpicture}

%% file: sections/construction.tex
\section{Hierarchically-refinable spline differential forms}
\label{sec:construction}

In this section, we describe the construction of adaptively-refinable B-spline and polar spline differential forms.
We start by defining the hierarchical construction for nested sequences of spline differential forms in Section \ref{sec:construction-hierarchical}.
In Section \ref{sec:construction-linear-ind}, we show that the generating sets of the hierarchical spaces also form a basis, both for B-splines and polar splines.

\input{./sections/construction-hierarchical}
\input{./sections/construction-linear-ind}

%% file: sections/construction-hierarchical.tex
\subsection{Hierarchical spline de Rham complexes}
\label{sec:construction-hierarchical}

\subsubsection{A general construction of hierarchical spline spaces}
\label{sec:construction-hierarchical-general}

Consider a \emph{domain hierarchy}, i.e., a sequence of nested, closed domains,
\begin{equation}
    \tpDomain =: \tpDomain_0\supseteq \tpDomain_1 \supseteq \cdots \supseteq \tpDomain_{L}\;.
\end{equation}
Consider a sequence of $L+1$ nested $k$-form spline spaces, all defined on $\tpDomain$,
\begin{equation}
    \vSpace{X}_0^k \subset \vSpace{X}_1^k \subset \cdots \subset \vSpace{X}_L^k\;,
\end{equation}
and the corresponding vector of $k$-form spline basis functions, $\basis{X}^k_\ell := [X^k_{i,\ell} : i = 1, \dots, n^k_\ell]$.
Then, a hierarchical $k$-form spline space, denoted $\hSpace{X}^k$ is defined as the span of a collection of generating functions, which are picked according to the chosen domain hierarchy.

\begin{definition}[Hierarchical $k$-form spline space]
    The hierarchical $k$-form spline space $\hSpace{X}^k$ is defined to be the span of generating functions $\hBasis{X}^k$,
    \begin{equation}
        \hBasis{X}^k := \left[
            X_{i,\ell}^k~:~(i,\ell) \in \activeS{k,X}{}
        \right]\;,
    \end{equation}
    where $\activeS{}{}$ is called the \emph{active index-set for functions} and is defined recursively as below.
    \begin{enumerate}
        \item \emph{Initialization}: $\activeS{k,X}{0} := \{(i,0)~:~i = 1, \dots, n^k_0\}$.
        \item \emph{Recursive case}: For $\ell = 1,\dots,L$, $\activeS{k,X}{\ell} := \activeS{k,X}{\ell,0} \cup \activeS{k,X}{\ell,1}$, where
            \begin{equation}
                \begin{split}
                    \activeS{k,X}{\ell,0} &:= \{(i,\ell') \in \activeS{k,X}{\ell-1}~:~ \supp(X^k_{i,\ell'}) \not\subseteq \tpDomain_{\ell}\}\;,\\
                    \activeS{k,X}{\ell,1} &:= \{(i,\ell)~:~ \supp(X^k_{i,\ell}) \subseteq \tpDomain_{\ell}\}\;.
                \end{split}
            \end{equation}
        \item \emph{Finalization}: $\activeS{k,X}{} := \activeS{k,X}{L}$.
    \end{enumerate}
\end{definition}
In the following we will see applications of this definition to different choices of the sequence of spaces $\vSpace{X}^k_\ell$.
For instance, we will apply it to the sequence of intermediate B-spline spaces $\bspS^{k,\tint}_\ell$ and to the sequence of polar spline spaces $\polarS^k_\ell$ to construct the hierarchical B-spline spaces $\hSpace{\bsp}^{k,\tint}$ and the hierarchical polar spline spaces $\hSpace{\polar}^k$, respectively.

\subsubsection{Hierarchical B-spline and polar-spline complexes}
\label{sec:construction-hierarchical-specific}

In the following, we will specialize the above hierarchical construction to the context of B-splines and polar-spline $k$-forms.
Consider knot vectors $\kntsU_0, \dots, \kntsU_L$ and $\kntsV_0, \dots, \kntsV_L$, and their corresponding $\cU$-periodic B-spline spaces for $\ell = 0, \dots, L$ (recall \eqref{eq:periodic-bsplineForms}):
\begin{equation}
  \begin{split}
    \tpF{0,\per}{\pkntsU_\ell,\kntsV_\ell}_\ell &:= \bspS[\pkntsU_\ell,\kntsV_\ell]\;,\\
    \tpF{1,\per}{\pkntsU_\ell,\kntsV_\ell}_\ell &:= \bspS[d\pkntsU_\ell,\kntsV_\ell]\;\dcU + \bspS[\pkntsU_\ell,d\kntsV_\ell]\;\dcV\;,\\
    \tpF{2,\per}{\pkntsU_\ell,\kntsV_\ell}_\ell &:= \bspS[d\pkntsU_\ell,d\kntsV_\ell]\;\dcU\wedge \dcV\;,
  \end{split}
\end{equation}
Let us denote the corresponding B-spline basis functions as $\bspB^{k,\per}_\ell$, $k = 0, 1, 2$, $\ell = 0, \dots, L$.
We also introduce the corresponding intermediate B-spline spaces and basis functions:
\begin{equation}
    \bspS^{k,\tint}_\ell := \mat{E}^{k,\tint}_\ell\bspS^{k,\per}_\ell\;,\qquad
    \bspB^{k,\tint}_\ell := \mat{E}^{k,\tint}_\ell\bspB^{k,\per}_\ell\;,
\end{equation}
as well as the polar spline spaces and polar spline basis functions:
\begin{equation}\label{eq:polar-spline-levels}
    \polarS^k_\ell := \mat{E}^k_\ell\bspS^{k,\tint}_\ell\;,\qquad
    \polarB^k_\ell := \mat{E}^k_\ell\bspB^{k,\tint}_\ell\;,\qquad
    \ell = 0, \dots, L~~,~~k = 0, 1, 2\;,
\end{equation}
where $\mat{E}^{k,\tint}_\ell$ and $\mat{E}^k_\ell$ are the matrices defined in \cref{app:intermediate-extraction,app:extraction-refinement}, respectively.

\begin{assumption}[Nested knot vectors]\label{ass:knot-vectors-nested}
    The knot vectors are chosen such that all B-spline spaces form a nested sequence,
    \begin{equation}
        \bspS^{k,\per}_0 \subset \bspS^{k,\per}_1 \subset \cdots \subset \bspS^{k,\per}_L\;,\qquad k = 0, 1, 2\;.
    \end{equation}
\end{assumption}

\begin{proposition}
    Given Assumption 1, the intermediate B-spline spaces form the following nested sequence for $k = 0, 1, 2$:
    \begin{equation}
        \bspS^{k,\tint}_0 \subset \bspS^{k,\tint}_1 \subset \cdots \subset \bspS^{k,\tint}_L\;.
    \end{equation}
\end{proposition}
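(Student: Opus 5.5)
To prove the nesting, I would not go through the extraction matrices at all. Instead I would first characterize each intermediate space $\bspS^{k,\tint}_\ell$ intrinsically, as a subspace of $\bspS^{k,\per}_\ell$ cut out by conditions at $\cV=0$ that do not depend on the level $\ell$. Once that is done, nesting follows immediately from \cref{ass:knot-vectors-nested}. From the description of $\mat{E}^{k,\tint}_\ell$ (0-forms that do not vanish at $\cV=0$ are summed into one function, 1-forms with non-vanishing trace are removed, and 2-forms are unchanged), the candidate characterizations are
\begin{equation*}
\begin{split}
\bspS^{0,\tint}_\ell &= \{f\in\bspS^{0,\per}_\ell : f|_{\cV=0}\ \text{is constant}\}\;,\\
\bspS^{1,\tint}_\ell &= \{f_\cU\,\dcU+f_\cV\,\dcV\in\bspS^{1,\per}_\ell : f_\cU|_{\cV=0}=0\}\;,\\
\bspS^{2,\tint}_\ell &= \bspS^{2,\per}_\ell\;.
\end{split}
\end{equation*}

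The first step is to prove these identities for a single level. I would use the end-point property of univariate B-splines: only $\bsp_{1,\degV}$ is non-zero at $\cV=0$, and $\bsp_{1,\degV}(0)=1$. Hence the trace of $\sum_{ij} f_{ij}\bsp^\per_{i,\degU}\bsp_{j,\degV}$ at $\cV=0$ is $\sum_i f_{i1}\bsp^\per_{i,\degU}(\cU)$. By linear independence of the periodic basis together with partition of unity, this trace is constant if and only if all $f_{i1}$ are equal. That is exactly the span of the summed function $\bsp^{0,\tint}_1$ together with the B-splines with $j\ge 2$.

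The same argument applied to the $\dcU$ component, which lives in $\bspS[d\pkntsU,\kntsV]$, shows that its trace vanishes if and only if the coefficients with $j=1$ vanish. The $\dcV$ component lives in $\bspS[\pkntsU,d\kntsV]$ and is left untouched. Here I would check against \cref{app:intermediate-extraction} that only the $\dcU$ functions with non-vanishing trace are removed. This is consistent with the reduced-complex decomposition in \cref{thm:intermediate-complex}, where the $\dcV$ part contains $\vSpace{V}^0_\cU 1_\cV$. If the appendix also imposes conditions on the $\dcV$ part, I would adapt the characterization accordingly, provided those conditions are again trace conditions at $\cV=0$.

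With the characterizations in hand, nesting is immediate. Take $f\in\bspS^{k,\tint}_\ell$. Then $f\in\bspS^{k,\per}_\ell\subset\bspS^{k,\per}_{\ell+1}$ by \cref{ass:knot-vectors-nested}. The defining condition (constant trace, or vanishing $\dcU$-trace) is a property of the function itself and not of its level, so it still holds, and therefore $f\in\bspS^{k,\tint}_{\ell+1}$.

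The main obstacle is making sure that the extraction in the appendix really coincides with these intrinsic characterizations, in particular for the 1-forms. The periodic derivative space $\bspS[d\pkntsU]$ has slightly different smoothness at $\cU=1$, so I need to confirm that its basis still satisfies the trace argument. This holds because the $\cV$-factor is the only one involved in the trace, and the $\cU$-factor basis is linearly independent.
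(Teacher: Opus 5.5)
Your proof is correct, but it takes a different route from the paper's. The paper works with coefficients. It expands $f\in\bspS^{0,\tint}_\ell$ in the periodic tensor-product basis and applies the knot-insertion matrices $\mat{K}^{\cU}_\ell$ and $\mat{K}^{\cV}_\ell$. It then checks that the refined coefficients with $\cV$-index $j=1$ still all equal $f^{\tint}_{1,\ell}$; this amounts to $K^{\cV}_{1s,\ell}=\delta_{1s}$ for open knot vectors together with the unit row sums of $\mat{K}^{\cU}_\ell$. From this it writes down explicit level-$(\ell+1)$ intermediate coefficients, and leaves $k=1,2$ as ``similar''.

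You instead identify each $\bspS^{k,\tint}_\ell$ as the subspace of $\bspS^{k,\per}_\ell$ cut out by a level-independent condition at $\cV=0$, so nesting follows from \cref{ass:knot-vectors-nested} with no computation. Your characterizations are exactly right. $\mat{E}^{1,\tint}$ deletes precisely the first $\pndofU$ functions, i.e.\ the $\dcU$ B-splines with $j=1$, and keeps every $\dcV$ function, so your hedge about extra conditions on the $\dcV$-part is unnecessary.

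Your route is cleaner and covers all three $k$ uniformly. It also explains why the paper's coefficient identity holds: refinement does not change the function, hence not its trace at $\cV=0$. What the paper's route buys is the explicit two-scale matrix $\mat{K}^{\tint}_\ell$, which is reused in the next proposition. There, its submatrix $\widehat{\mat{K}}^{\tint}_\ell$ maps the control points defining $\widehat{\mat{E}}^0_\ell$ to those defining $\widehat{\mat{E}}^0_{\ell+1}$, and that is how nestedness of the polar spaces is proved. With your approach that matrix would have to be written down separately, though it is easily recovered from $\mat{K}^{\per}_\ell$.
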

\begin{proof}
    The proof follows from the construction of the matrices $\mat{E}^{k,\tint}_\ell$, $k = 0, 1, 2$, as outlined in \cref{app:intermediate-extraction}.
    We discuss the case for $k = 0$ in detail, the other cases follow similarly.

	Consider an arbitrary level-$\ell$ spline $f = \sum_{i=1}^{n^{0,\tint}_\ell} f_{i,\ell}^\tint \bsp^{0,\tint}_{i,\ell} \in \bspS^{0,\tint}_\ell$.
    Then, using the definition of $\mat{E}^{0,\tint}_\ell$,
    \begin{equation*}
        f(\cU,\cV) = \sum\limits_{i=1}^{\pndofU_\ell}\sum_{j=1}^{\ndofV_\ell} f_{ij,\ell} \bsp^{\per}_{i,\degU,\ell}(\cU) \bsp_{j,\degV,\ell}(\cV)\;,
    \end{equation*}
    where
    \begin{equation*}
        f_{ij,\ell} = \begin{dcases}
            f^\tint_{1,\ell}\;, & i = 1, \dots, \pndofU_\ell\;,\;j = 1\;,\\
            f^\tint_{i + (j-2)\pndofU_\ell + 1,\ell}\;, & i = 1, \dots, \pndofU_\ell\;,\; j = 2, \dots, \ndofV_\ell\;.
        \end{dcases}
    \end{equation*}
    Then, let us define the coefficients $f_{ij,\ell+1}$ using the refinement matrix $\mat{K}^{\per}_\ell$ from \cref{app:extraction-refinement}:
    \begin{equation}\label{eq:two-scale-intermediate-1}
        f_{ij,\ell+1} :=
        \sum_{r=1}^{\pndofU_{\ell}}\sum_{s=1}^{\ndofV_{\ell}}K^{\cU}_{ir,\ell} f_{rs,\ell}K^{\cV}_{js,\ell} , \text{ for } i = 1, \dots, \pndofU_{\ell}, j = 1, \dots, \ndofV_\ell\;,
    \end{equation}
    such that the spline $f$ can also be expressed as:
    \begin{equation*}
        f(\cU,\cV) = \sum\limits_{i=1}^{\pndofU_{\ell+1}}\sum_{j=1}^{\ndofV_{\ell+1}} f_{ij,\ell+1} \bsp^{\per}_{i,\degU,\ell+1}(\cU) \bsp_{j,\degV,\ell+1}(\cV)\;.
    \end{equation*}
    Then, by the properties of B-spline knot insertion matrices \cite{deBoor:1978}, we can verify that:
    \begin{equation*}
        f_{1,\ell}^\tint = f_{i1,\ell+1}\;,\qquad i = 1, \dots, \pndofU_{\ell+1}\;.
    \end{equation*}
    Therefore, the coefficients $f_{i,\ell+1}^\tint$ defined as
    \begin{equation}\label{eq:two-scale-intermediate-2}
        f_{i,\ell+1}^\tint := \begin{dcases}
            f_{1,\ell}^\tint\;, & i = 1\;,\\
            f_{jk,\ell+1}\;, & i = j + (k-2)\pndofU_{\ell+1} + 1\;, j = 1, \dots, \pndofU_{\ell+1}\;, k = 2, \dots, \ndofV_{\ell+1}\;,\\
        \end{dcases}
    \end{equation}
    describe the same spline:
    \begin{equation*}
        f = \sum\limits_{i=1}^{n^{0,\tint}_{\ell+1}} f_{i,\ell+1} \bsp^{0,\tint}_{i,\ell+1}\;,
    \end{equation*}
    thus completing the proof.
\end{proof}

\begin{proposition}
    Given Assumption 1, the polar spline spaces form the following nested sequence for $k = 0, 1, 2$:
    \begin{equation}
        \polarS^k_0 \subset \polarS^k_1 \subset \cdots \subset \polarS^k_L\;.
    \end{equation}
\end{proposition}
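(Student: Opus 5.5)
Since $\polarS^k_\ell \subset \bspS^{k,\tint}_\ell \subset \bspS^{k,\tint}_{\ell+1}$ by the previous proposition, it suffices to show that every $f \in \polarS^k_\ell$, viewed as an element of $\bspS^{k,\tint}_{\ell+1}$, lies in $\polarS^k_{\ell+1}$. We will not try to track the extraction matrices $\mat{E}^k_\ell$ coefficient by coefficient. Instead, we will use the characterization of polar spline spaces through jets at the pole in \cref{thm:polar-splines}, applied at level $\ell+1$. The argument rests on two facts. First, jets at $\cV = 0$ are intrinsic to the function and do not depend on the level at which it is represented. Second, the polar spaces at each level are determined by conditions on these jets together with membership in the intermediate space.

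The key step is to characterize $\polarS^k_\ell$ intrinsically as a subspace of $\bspS^{k,\tint}_\ell$ defined by jet conditions.

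For $k=2$, \cref{item:limited-influence} already gives
\[
\polarS^2_\ell = \{f \in \bspS^{2,\tint}_\ell : \jet{0}{f_{\cU\cV}} = 0\}.
\]
Since this condition is preserved under inclusion into level $\ell+1$, the nestedness is immediate.

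For $k=0$, the claimed characterization is
\[
\polarS^0_\ell = \{f\in\bspS^{0,\tint}_\ell : \jet{1}{f} \in \jet{1}{\spaceSpan{\polar^0_{1,\ell},\polar^0_{2,\ell},\polar^0_{3,\ell}}}\}.
\]
This follows by splitting $f = \sum_{i\le 3} c_i\polar^0_{i,\ell} + \check f$ with $\jet{1}{\check f}=0$, and using \cref{item:nnz-basis-bottom,item:limited-influence}. Here we must also check that the three-dimensional jet space $\mathcal{J}^0 := \jet{1}{\spaceSpan{\polar^0_{i,\ell}}}$ is independent of $\ell$. By the construction in \cref{app:extraction}, these jets are the $1$-jets of the functions $1$, $s$ and $t$ pulled back through $\mbf{F}$, i.e.\ affine functions on $\polarDomain$. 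The mapping $\mbf{F}$ is fixed across levels, and by \eqref{eq:control-points} its $1$-jet at $\cV=0$ is a $\cU$-trigonometric profile times $\bsp_{2,\degV}'(0)\rho_2$. So we will verify that $\mathcal{J}^0$ is the same at every level, namely the span of the $1$-jets of $1$, $F_\cU$ and $F_\cV$.

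For $k=1$, the analogous description is that $\jet{1}{f_\cU}$ and $\jet{0}{f_\cV}$ lie in the two-dimensional space spanned by the corresponding jets of $\polar^1_{1,\ell}$ and $\polar^1_{2,\ell}$. By \cref{app:extraction}, these are (up to change of basis) $d$ of the polar $0$-forms associated with $s$ and $t$. Hence that space equals the jets of $dF_\cU$ and $dF_\cV$ and is again level-independent.

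With these characterizations in hand, take $f \in \polarS^k_\ell$. Then $f\in\bspS^{k,\tint}_{\ell+1}$, and its jets lie in the level-independent jet space, so the same decomposition at level $\ell+1$ shows $f\in\polarS^k_{\ell+1}$. The last step uses \cref{item:limited-influence} at level $\ell+1$ to absorb the remainder $\check f$, whose relevant jets vanish.

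The main obstacle is the level independence of the pole jet spaces. If the standard control points \eqref{eq:control-points} are recomputed at each level from $\pndofU_\ell$ and $\ndofV_\ell$, the geometric map changes with $\ell$. In that case we must instead use that the $1$-jet of $\mbf{F}_\ell$ at the pole is $\bsp_{2,\degV,\ell}'(0)\rho_{2,\ell}$ times $(\sum_i \cos\theta_i \bsp^\per_{i},\ \sum_i\sin\theta_i\bsp^\per_i)$. The scalar factor is harmless. The angular profiles, however, are quasi-interpolants of $\cos 2\pi\cU$ and $\sin 2\pi\cU$, and these differ between levels. If this genuinely breaks nestedness, the fix is to define the level-$\ell$ extraction (as \cref{app:extraction-refinement} presumably does) by refining the level-$0$ pole functions through the knot-insertion matrices $\mat{K}^\per_\ell$. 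In that case the inclusion $\polar^k_{i,\ell}\in\polarS^k_{\ell+1}$ for the pole functions holds by construction, and the argument above handles all remaining basis functions.
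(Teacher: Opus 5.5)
Your proof is correct, but it takes a different route from the paper's.

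\textbf{The paper's route.} The paper works with coefficients. The first $\pndofU_{\ell+1}+1$ fine intermediate coefficients depend only on the first $\pndofU_\ell+1$ coarse ones, through a submatrix $\widehat{\mat{K}}^{\tint}_\ell$ of the knot-insertion matrix. By \cref{app:extraction-refinement}, the control points used in $\widehat{\mat{E}}^0_{\ell+1}$ are exactly $\widehat{\mat{K}}^{\tint}_\ell$ applied to the level-$\ell$ ones. Hence $\widehat{\mat{E}}^0_{\ell+1}=\mat{T}^{-T}_\ell\widehat{\mat{E}}^0_\ell\widehat{\mat{K}}^T_\ell$, and the fine polar coefficients are written down explicitly: $\mat{T}_\ell$ acts on the three pole functions and knot insertion on the rest. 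Only $k=0$ is treated; $k=1,2$ are called ``similar''.

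\textbf{Your route.} You characterize $\polarS^k_\ell$ inside $\bspS^{k,\tint}_\ell$ by requiring the relevant pole jets to lie in a fixed finite-dimensional jet space. This is sound, by \cref{item:nnz-basis-bottom,item:limited-influence} of \cref{thm:polar-splines}, and it reduces nestedness to level-independence of that jet space.

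\textbf{Comparison.} Both proofs rest on the same fact: the geometry map is literally the same function at every level. So the $1$-jets of $1$, $F_\cU$, $F_\cV$ do not depend on $\ell$, and the level-dependent scaling $\mat{L}\mat{R}_\ell$ does not change spans. Your version is coordinate-free and treats $k=0,1,2$ uniformly. The paper's version additionally yields the explicit two-scale relations that an implementation needs.

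\textbf{Two small remarks.}
\begin{enumerate}
\item The worry in your last paragraph is moot. \eqref{eq:control-points} is used only at level $0$, and \cref{app:extraction-refinement} obtains the level-$\ell$ control points by knot insertion, so $\mbf{F}_\ell=\mbf{F}$. Your main argument therefore applies directly. Your description of that appendix's construction is also slightly off: it refines the geometry's control points, not the pole functions. A level-$\ell$ pole function is, via $\mat{T}_\ell$, a combination of level-$(\ell+1)$ pole functions plus terms with vanishing jets. It is not an individual refinement, and its membership in $\polarS^0_{\ell+1}$ is not ``by construction''; it comes from your jet argument or from the paper's relation.
\item For $k=1$, avoid reading the jets of $\polar^1_{1,\ell},\polar^1_{2,\ell}$ off $\widehat{\mat{E}}^1$, whose derivative normalization is not spelled out. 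It is cleaner to use $d\polarS^0_\ell\subset\polarS^1_\ell$. The jet space of $\polarS^1_\ell$ then contains the jet span of $dF_\cU$ and $dF_\cV$, which is level-independent and two-dimensional (by the independence of the three $1$-jets in \cref{item:linear-ind-bottom}). Equality follows because the jet space of $\polarS^1_\ell$ is itself two-dimensional, again by \cref{item:linear-ind-bottom}.
\end{enumerate}
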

\begin{proof}
    The proof follows from the construction of the matrices $\mat{E}^{k}_\ell$, $k = 0, 1, 2$, as outlined in \cref{app:extraction}.
    We discuss the cases for $k = 0$ in detail, the other cases follow similarly.

	Let $\mat{K}^{\tint}_\ell$ be the refinement matrix relating the spline coefficients of intermediate B-spline spaces at levels $\ell$ and $\ell+1$; this matrix is defined by \cref{eq:two-scale-intermediate-1,eq:two-scale-intermediate-2}.
    In particular, for a given spline $f = \sum_{i=1}^{n^{0,\tint}_\ell} f_{i,\ell}^\tint \bsp^{0,\tint}_{i,\ell} \in \bspS^{0,\tint}_\ell$, let $\widehat{\mat{K}}^{\tint}_\ell$ be its size $(\pndofU_{\ell+1}+1) \times (\pndofU_\ell+1)$ submatrix such that:
    \begin{equation*}
        \left[
            f^\tint_{i,\ell} : i = 1, \dots, \pndofU_{\ell}+1
        \right]
        \xmapsto{\widehat{\mat{K}}^{\tint}_\ell}
        \left[
            f^\tint_{i,\ell+1} : i = 1, \dots, \pndofU_{\ell+1}+1
        \right]\;,
    \end{equation*}
    Next, recall that the level-$\ell$ matrix $\mat{E}^0_{\ell}$ can be expressed as in \eqref{eq:polar-ext-0}:
    \begin{equation*}
        \mat{E}^0_{\ell} = 
    \begin{bmatrix}
    \widehat{\mat{E}}^0_{\ell} &\\
    & \mat{I}_{n^0_{\ell}-3}
    \end{bmatrix}\;.
    \end{equation*}
    In particular, the matrix $\widehat{\mat{E}}^0_{\ell}$ is computed using the control points:
    \begin{equation*}
        \left[
            \mbf{F}_{11,\ell}\;\;\mbf{F}_{12,\ell}\;\;\mbf{F}_{22,\ell}\;\;\cdots\;\;\mbf{F}_{\pndofU_{\ell}2,\ell}
        \right]\;,
    \end{equation*}
    while the matrix $\widehat{\mat{E}}^0_{\ell+1}$ is computed using the control points:
    \begin{equation*}
        \left[
            \mbf{F}_{11,\ell+1}\;\;\mbf{F}_{12,\ell+1}\;\;\mbf{F}_{22,\ell+1}\;\;\cdots\;\;\mbf{F}_{\pndofU_{\ell+1}2,\ell+1}
        \right]\;,
    \end{equation*}
    with the submatrix $\widehat{\mat{K}}^{\tint}_\ell$ identified above mapping the former set of control points to the latter.
    Consequently, the matrices $\widehat{\mat{E}}^0_{\ell}$ and $\widehat{\mat{E}}^0_{\ell+1}$ can be related as follows:
    \begin{equation}\label{eq:two-scale-polar-0}
        \begin{split}
            \widehat{\mat{E}}^0_{\ell+1} &= 
            \underbrace{\mat{L}\mat{R}_{\ell+1}\mat{R}^{-1}_\ell \mat{L}^{-1}}_{=: \mat{T}^{-T}_{\ell}} \widehat{\mat{E}}^0_{\ell} \widehat{\mat{K}}^T_{\ell}\;,
        \end{split}
    \end{equation}
    where $\mat{L}$, $\mat{R}_\ell$ and $\mat{R}_{\ell+1}$ are as defined in \cref{eq:polar-ext-0-core}.
    
    Next, consider an arbitrary spline $f = \sum_{i=1}^{n^0_\ell} f^\tpolar_{i,\ell} \polar^0_{i,\ell} \in \polarS^0_\ell$.
    Let us define coefficients $f^\tpolar_{i,\ell+1}$ as follows,
    \begin{equation}\label{eq:two-scale-polar-1}
        f^\tpolar_{i,\ell+1} =
        \begin{dcases}
            \sum_{j=1}^3 T_{ij,\ell}f^\tpolar_{j,\ell}\;,& i = 1, \dots, 3\;,\\
            \sum_{j=1}^{n^{0,\tint}_\ell} K^{\tint}_{ij,\ell} \sum_{k=1}^{n^{0}_\ell} E^0_{kj,\ell}f^\tpolar_{k,\ell}\;,&\text{otherwise}\;.
        \end{dcases}
    \end{equation}
    where $T_{ij,\ell}$ and $K^{\tint}_{ij,\ell}$ are the $ij$-th entries of $\mat{T}_{\ell}$ and $\mat{K}^{\tint}_{\ell}$, respectively.
    Then, with the coefficients $f^\tpolar_{i,\ell}$ and $f^\tpolar_{i,\ell+1}$ arranged in vectors $\mbf{f}^\tpolar_\ell$ and $\mbf{f}^\tpolar_{\ell+1}$, respectively, we can verify that the splines on the left- and right-hand sides are identical, thus completing the proof,
    \begin{equation*}
        \begin{split}
        \left(\mbf{f}^\tpolar_\ell\right)^T
        \begin{bmatrix}
            \widehat{\mat{E}}^0_{\ell} &\\
            & \mat{I}_{n^0_{\ell}-3}
        \end{bmatrix}
        \bspB^{0,\tint}_\ell
        &=
        \left(\mbf{f}^\tpolar_\ell\right)^T
        \begin{bmatrix}
            \widehat{\mat{E}}^0_{\ell} &\\
            & \mat{I}_{n^0_{\ell}-3}
        \end{bmatrix}
        \left(\mat{K}^{\tint}_{\ell}\right)^T\bspB^{0,\tint}_{\ell+1}\;,\\
        &=
        \left(\mbf{f}^\tpolar_{\ell+1}\right)^T
        \begin{bmatrix}
            \widehat{\mat{E}}^0_{\ell+1} &\\
            & \mat{I}_{n^0_{\ell+1}-3}
        \end{bmatrix}
        \bspB^{0,\tint}_{\ell+1}\;.
        \end{split}
    \end{equation*}
\end{proof}

\begin{remark}
    The above proof also provides an explicit description of the two-scale relations relating polar spline degrees of freedom at levels $\ell$ and $\ell+1$, see \cref{eq:two-scale-polar-0,eq:two-scale-polar-1}.
    Similar relations can be derived for the $1$- and $2$-form cases.
\end{remark}

Following the nestedness results, we can readily apply the hierarchical construction to both B-splines and polar splines.
For the B-spline case, we will denote the result of applying the hierarchical construction to the sequence of spaces $\bspS^{k,\tint}_{\ell}$ as $\hSpace{\bsp}^{k,\tint}$.
Similarly, for the polar spline case, we will denote the result of applying the hierarchical construction to the sequence of spaces $\polarS^k_\ell$ as $\hSpace{\polar}^k$.
We will denote the corresponding active index-sets as $\activeS{k,\bsp^\tint}{}$ and $\activeS{k,\polar}{}$, respectively.
The hierarchical B-spline complex will be denoted as $\complex{\hSpace{\bsp}}^{,\tint}$,
\begin{equation}
    \begin{tikzcd}
        \complex{\hSpace{\bsp}}^{,\tint}~:~ \hSpace{\bsp}^{0,\tint} \arrow{r}{d} & \hSpace{\bsp}^{1,\tint} \arrow{r}{d} & \hSpace{\bsp}^{2,\tint}\;,
    \end{tikzcd}
\end{equation}
and the hierarchical polar spline complex will be denoted as $\hpolarComplex$,
\begin{equation}
    \begin{tikzcd}
        \hpolarComplex~:~ \hSpace{\polar}^0 \arrow{r}{d} & \hSpace{\polar}^1 \arrow{r}{d} & \hSpace{\polar}^2\;.
    \end{tikzcd}
\end{equation}
Let \(n^k\) denote the number of functions in $\hBasis{\polar}^k$, $k = 0, 1, 2$, and denote them as $\hBasis{\polar}^k = [\hbsp{\polar}^k_i : i = 1, \dots, n^k]$.

Finally, and as in the case of the other spaces introduced thus far, the hierarchical polar spline de Rham complex on $\polarDomain$ can now be defined as,
\begin{equation}
    \begin{tikzcd}
        \complex{\overline{\hSpace{\polar}}}~:~ \overline{\hSpace{\polar}}^{0} \arrow{r}{d} & \overline{\hSpace{\polar}}^{1} \arrow{r}{d} & \overline{\hSpace{\polar}}^{2}\;,
    \end{tikzcd}
\end{equation}
where we define the spaces on the physical domain $\polarDomain$ as
\begin{equation}
    \overline{\hSpace{\polar}}^{k} := \left\{ f ~:~ \mbf{F}^* f  \in \hSpace{\polar}^{k} \right\}\;,\qquad k = 0, 1, 2\;.
\end{equation}
These spaces are later used in \cref{sec:numerics} for numerical experiments.

%% file: sections/construction-linear-ind.tex
\subsection{Linear independence of the generating functions}
\label{sec:construction-linear-ind}

We will now show that, for any $k = 0, 1, 2$, the generating functions of the hierarchical B-spline and polar spline spaces, whose indices are respectively contained in the sets $\activeS{k,\bsp}{}$ and $\activeS{k,\polar}{}$, are linearly independent.
We will do so under an additional assumption on the domain hierarchy.

\begin{assumption}[Domain hierarchy from $0$-form refinement]\label{ass:refinement-domains}
    For $\ell=1, \dots, L$, we assume that $\tpDomain_{\ell}$ is given as the union of
	supports of polar spline $0$-forms.
    That is, for any $\ell \in \{1, \dots, L\}$, there exists $I_{\ell-1} \subseteq \{1, \dots, n^0_{\ell-1}\}$ such that
    \begin{equation}
        \tpDomain_{\ell} = \bigcup_{i \in I_{\ell-1}}\suppwp{P^0_{i,\ell-1}}\;.
    \end{equation}
\end{assumption}

The above assumption immediately implies linear independence of the hierarchical B-splines in $\hSpace{\bsp}^{k,\tint}$.
\begin{theorem}
    \label{thm:bspline-linear-ind}
    For all $k = 0, 1, 2$, Assumptions 1-2 imply that the spline functions in $\hBasis{\bsp}^{k,\tint}$ are linearly independent and, thus, form a basis for $\hSpace{\bsp}^{k,\tint}$.
\end{theorem}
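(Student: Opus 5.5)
The plan is to use the standard argument for linear independence of hierarchical generating sets, as in \cite{Giannelli:2014}, and apply it to the intermediate spaces $\bspS^{k,\tint}_\ell$. It rests on two ingredients. The first is local linear independence of the level-wise bases $\bspB^{k,\tint}_\ell$, which is \cref{lem:local-linear-independence-intermediate} applied at each level. The second is nestedness of the spaces (Assumption~\ref{ass:knot-vectors-nested} and the nestedness proposition for intermediate spaces). The role of Assumption~\ref{ass:refinement-domains} is to make each $\tpDomain_\ell$ a union of level-$(\ell-1)$ mesh elements. Polar $0$-form supports are unions of tensor-product elements of level $\ell-1$, and by nestedness also of level $\ell$. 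As a result, the statement ``a function vanishes on $\tpDomain\setminus\tpDomain_\ell$'' can be tested element by element.

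The core is an induction on the level, run from the coarsest level upward. Suppose
\begin{equation*}
\sum_{(i,\ell)\in\activeS{k,\bsp^\tint}{}} c_{i,\ell}\,\bsp^{k,\tint}_{i,\ell} = 0 .
\end{equation*}
Restrict this identity to the open set $\tpDomain\setminus\tpDomain_1$. Every active function of level $\ell\ge1$ has support in $\tpDomain_\ell\subseteq\tpDomain_1$, so it vanishes there. Only the active level-$0$ functions remain, and these are exactly the ones whose supports are not contained in $\tpDomain_1$.

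Each such function is nonzero on some level-$0$ element $Q\not\subseteq\tpDomain_1$. This uses that supports are closed unions of elements and that $\tpDomain_1$ is a union of elements, so ``support not contained in $\tpDomain_1$'' means the support contains an element outside $\tpDomain_1$. On that element the level-$0$ basis is locally linearly independent (\cref{lem:local-linear-independence-intermediate}), so all these coefficients vanish.

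We then remove level $0$ and repeat. On $\tpDomain_\ell\setminus\tpDomain_{\ell+1}$ the only surviving terms are active functions of level $\le\ell$, since higher-level active functions live in $\tpDomain_{\ell+1}$ and lower levels have already been shown to vanish. These level-$\ell$ functions are active, so their supports lie in $\tpDomain_\ell$ but not in $\tpDomain_{\ell+1}$, and hence they are nonzero on a level-$\ell$ element of $\tpDomain_\ell\setminus\tpDomain_{\ell+1}$. Local linear independence at level $\ell$ then kills their coefficients.

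One point needs care: active functions of levels $\ell'<\ell$ could in principle also appear on this region. They are handled by ordering the induction so that all coefficients of levels below $\ell$ are already zero. This matches the recursive definition of $\activeS{k,X}{\ell}$, under which a function of level $\ell'$ stays active only while its support is not contained in the finer domain.

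I expect the main obstacle to be the geometric compatibility step. The active-set rule is phrased in terms of supports of $k$-form functions for $k=1,2$, while Assumption~\ref{ass:refinement-domains} refers to polar $0$-form supports, and these supports are closed sets. I need to confirm that they are unions of closed level-$\ell$ elements. This holds because all B-spline supports, including the merged first function $\bsp^{0,\tint}_1$, which contains the whole bottom strip, and the polar ones (by \cref{thm:polar-splines}\ref{item:nnz-basis-bottom}), are unions of elements. I also need that ``$\supp\not\subseteq\tpDomain_{\ell+1}$'' yields an element in the complement on which the function is not identically zero; this follows from the support being the closure of the set where the function is nonzero. Once this is checked, the argument applies uniformly for $k=0,1,2$, because the only property of the bases it uses is local linear independence. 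Linear independence then shows that $\hBasis{\bsp}^{k,\tint}$ is a basis of $\hSpace{\bsp}^{k,\tint}$.
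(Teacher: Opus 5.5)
Your proposal is correct and takes the same route as the paper. The paper proves the theorem by combining the local linear independence of the level-wise bases $\bspB^{k,\tint}_\ell$ (\cref{lem:local-linear-independence-intermediate}) with the standard hierarchical linear-independence result of \cite{Giannelli:2012}; your level-by-level induction on $\tpDomain_\ell\setminus\tpDomain_{\ell+1}$ is the argument behind that cited result, written out in full.
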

\begin{proof}
    The linear independence of the $k$-form functions in $\hBasis{\bsp}^{k,\tint}$ follows from the local linear independence of the functions in single-level spaces $\bspS^{k,\tint}_\ell$; this follows from \cref{lem:local-linear-independence-intermediate} and the results in \cite{Giannelli:2012}.
\end{proof}

In fact, Assumptions 1-2 also imply linear independence of the polar spline functions in $\hSpace{\polar}^k$.
We show this in Theorem \ref{thm:polar-linear-ind} below, but first present two intermediate results.
Both results follow simply from the local support of the splines considered, and we only present the proof of the second one.

\begin{lemma}\label{lem:active-intermediate-basis}
    There is a unique level $0 \leq \ell \leq L$ such that $\bsp^{0,\tint}_{1,\ell}$ is active.
    Moreover, except the function identified above, every other active function in $\hBasis{\bsp}^{k,\tint}$ is identical to some level-$\ell'$ tensor-product B-spline $k$-form in $\bspB^{k,\per}_{\ell'}$,
     $k = 0, 1, 2$.
\end{lemma}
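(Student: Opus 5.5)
The plan is to exploit two facts: every domain in the hierarchy either contains the whole bottom edge $\cV=0$ or meets it in a proper closed subset, and the special function $\bsp^{0,\tint}_{1,\ell}$ is the only intermediate basis function whose support contains the entire bottom edge. First I record, from the construction of $\mat{E}^{k,\tint}_\ell$ in \cref{app:intermediate-extraction}, the description of the single-level bases. For $k=1,2$, every function in $\bspB^{k,\tint}_\ell$ is a tensor-product B-spline form in $\bspB^{k,\per}_\ell$. For $k=0$, the same holds except for $\bsp^{0,\tint}_{1,\ell}=\sum_i \bsp^\per_{i,\degU,\ell}\bsp_{1,\degV,\ell}$. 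By partition of unity in $\cU$, the support of $\bsp^{0,\tint}_{1,\ell}$ is the full strip $[0,1]\times[0,\kntV_{\degV+2,\ell}]$ (modulo periodicity). The second claim of the statement then follows at once. Every active function other than some $\bsp^{0,\tint}_{1,\ell}$ is an element of some single-level basis $\bspB^{k,\tint}_{\ell'}$, and hence it is identical to a tensor-product B-spline form in $\bspB^{k,\per}_{\ell'}$.

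For uniqueness, I define $\ell^\ast := \max\{\ell : \supp(\bsp^{0,\tint}_{1,\ell}) \subseteq \tpDomain_\ell\}$. This set contains $\ell=0$ because $\tpDomain_0=\tpDomain$. I then track the recursion. At level $0$, $(1,0)\in\activeS{0,\bsp^\tint}{0}$. At each step $\ell$, the pair $(1,\ell)$ is added exactly when $\supp(\bsp^{0,\tint}_{1,\ell})\subseteq\tpDomain_\ell$. Any previously active pair $(1,\ell')$ is kept exactly when $\supp(\bsp^{0,\tint}_{1,\ell'})\not\subseteq\tpDomain_\ell$.

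The key monotonicity observation is that the strips shrink with $\ell$ by nestedness of the knot vectors, since $\kntV_{\degV+2,\ell+1}\le\kntV_{\degV+2,\ell}$. The domains shrink as well. So whenever level $\ell$ adds $(1,\ell)$, we have $\supp(\bsp^{0,\tint}_{1,\ell'})\supseteq\supp(\bsp^{0,\tint}_{1,\ell})$ for all $\ell'<\ell$. This inclusion alone does not force removal of the older functions.

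This is where I expect the main obstacle: showing that the old pole function is actually deactivated when a new one is activated, which needs the right containment direction. I would argue it via the usual hierarchical-spline fact that, under Assumption~\ref{ass:knot-vectors-nested}, the level-$(\ell-1)$ function is a combination of level-$\ell$ functions. I would try to prove that $\supp(\bsp^{0,\tint}_{1,\ell-1})\subseteq\tpDomain_\ell$ is equivalent to all level-$\ell$ children lying in $\tpDomain_\ell$, using Assumption~\ref{ass:refinement-domains}. If this fails, I would instead restrict attention to the active pair of highest level $\ell^\ast$.

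Alternatively, I would simply show that at most one pole function can be active at the end, as follows. Suppose $(1,\ell')$ and $(1,\ell)$ with $\ell'<\ell$ were both active. Activation of $(1,\ell)$ gives $\supp(\bsp^{0,\tint}_{1,\ell})\subseteq\tpDomain_\ell$. By Assumption~\ref{ass:refinement-domains}, $\tpDomain_\ell$ is a union of supports of level-$(\ell-1)$ polar $0$-forms. Since the pole functions' supports are unions of bottom-row elements, containing the level-$\ell$ strip forces $\tpDomain_\ell$ to contain the support of one of the polar functions $\polar^0_{i,\ell-1}$, $i=1,2,3$. I would check from \cref{app:extraction} that each of these supports is the full level-$(\ell-1)$ strip. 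Iterating down the levels gives $\supp(\bsp^{0,\tint}_{1,\ell'})\subseteq\tpDomain_{\ell'+1}$. Hence $(1,\ell')$ is removed at step $\ell'+1$, which is a contradiction. Existence of an active pole function follows because the last level at which one is added is never followed by a removal, as removal requires a later addition by the same argument.
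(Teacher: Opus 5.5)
Your proposal is correct and follows the same route as the paper's very terse proof. The second claim is read off from the structure of $\mat{E}^{k,\tint}_\ell$. Existence and uniqueness come from $\suppwp{\bsp^{0,\tint}_{1,\ell}}$ being exactly the union of level-$\ell$ elements touching $\cV=0$, together with nestedness and the fact that each $\tpDomain_\ell$ is a union of level-$(\ell-1)$ elements. You rightly make the latter explicit via Assumption~\ref{ass:refinement-domains}: the paper leaves it implicit, and uniqueness fails without it, e.g.\ for $\tpDomain_1=\suppwp{\bsp^{0,\tint}_{1,1}}$ under dyadic refinement.

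The one step worth tightening is your justification that $\tpDomain_\ell$ must contain some $\suppwp{\polar^0_{i,\ell-1}}$, $i\le 3$. Non-pole B-splines in rows $3,\dots,\degV+1$ also have closed supports meeting $\cV=0$, so it is cleaner to argue directly. A union of closed level-$\ell'$ elements containing the edge $\cV=0$ must contain every level-$\ell'$ element touching it. This gives $\suppwp{\bsp^{0,\tint}_{1,\ell'}}\subseteq\tpDomain_{\ell'+1}$ for every $\ell'<\ell$ at once, without iterating.
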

\begin{proof}
    The claims follow from the definitions of the spaces $\bspS^{k,\tint}_\ell$, and from the fact that $\suppwp{\bsp^{0,\tint}_{1,\ell}} = [0,1] \times [0, \cV_{\degV_\ell+2, \ell}]$, i.e., it is formed by all elements touching the bottom boundary of $\tpDomain$.
\end{proof}

\begin{remark}\label{rem:unique-level}
    The unique level $\ell$ mentioned in the statement of Lemma \ref{lem:active-intermediate-basis} is the level of the coarsest element touching the bottom boundary of $\tpDomain$.
\end{remark}

\begin{lemma}\label{lem:active-polar-basis}
    Given Assumptions 1-2, let $\ell$ be the unique level mentioned in the statement of Lemma \ref{lem:active-intermediate-basis}.
    Then, the following claims hold at the bottom boundary of $\tpDomain$, $\cV = 0$.
    \begin{itemize}
        \item There are only $3$ functions in $\hBasis{\polar}^0$, all from level $\ell$, which have non-zero values and first derivatives.
        \item There are only $2$ functions in $\hBasis{\polar}^1$, all from level $\ell$, which have non-zero values.
        \item All functions in $\hBasis{\polar}^2$ have zero values.
    \end{itemize}
Moreover, except the level-$\ell$ functions identified above, every other function in $\hBasis{\polar}^k$ is identical to some tensor-product B-spline $k$-form in $\hBasis{\bsp}^{k,\tint}$, $k = 0, 1, 2$.
\end{lemma}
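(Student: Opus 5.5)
The plan is to reduce everything to single-level statements, namely \cref{thm:polar-splines}\ref{item:nnz-basis-bottom} and \ref{item:limited-influence}, using support arguments and Assumption~\ref{ass:refinement-domains}.

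First I would identify which levels can contribute near the pole. Let $\ell$ be the level of \cref{rem:unique-level}, i.e.\ the coarsest level of elements touching $\cV=0$. Then the strip of level-$\ell$ elements along the bottom boundary lies in $\tpDomain_\ell$, and none of it lies in $\tpDomain_{\ell+1}$. This holds because, by Assumption~\ref{ass:refinement-domains}, each $\tpDomain_{\ell'}$ is a union of supports of level-$(\ell'-1)$ polar $0$-forms. The supports of the pole functions $\polar^0_{i,\ell'-1}$, $i=1,2,3$, contain the whole bottom strip $[0,1]\times[0,\cV_{\degV+2,\ell'-1}]$. The supports of the other $0$-forms (tensor-product B-splines) touch $\cV=0$ only if their $\cV$-index is $1$ or $2$, and those are absorbed into the pole functions. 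So $\tpDomain_{\ell'}$ either contains the full bottom strip of level $\ell'-1$ or does not meet $\cV=0$ at all.

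Next I would show the level-$\ell$ functions $\polar^k_{i,\ell}$ from \cref{thm:polar-splines}\ref{item:nnz-basis-bottom} are active. Their supports are contained in the level-$\ell$ bottom strip, which lies in $\tpDomain_\ell$ (by the dichotomy, since level-$\ell$ elements exist there). For $\ell'\le\ell$ the nesting $\tpDomain_\ell\subseteq\tpDomain_{\ell'}$ gives $\supp(\polar^k_{i,\ell})\subseteq\tpDomain_\ell$, so they enter $\activeS{k,\polar}{\ell,1}$. Their supports are not contained in $\tpDomain_{\ell+1}$, because $\tpDomain_{\ell+1}$ does not meet $\cV=0$ while these supports do. Hence they are never deactivated.

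Then I would show that no pole function of any other level $\ell'$ is active. If $\ell'<\ell$, its support contains the level-$\ell'$ bottom strip, which lies in $\tpDomain_{\ell'+1}\subseteq\cdots\subseteq\tpDomain_\ell$ since elements there are refined. So it is deactivated at step $\ell'+1$. If $\ell'>\ell$, its support is not contained in $\tpDomain_{\ell'}$, which misses $\cV=0$, so it is never activated.

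Every remaining active function $\polar^k_{i,\ell'}$ is, by \cref{thm:polar-splines}\ref{item:limited-influence}, a tensor-product B-spline $k$-form in $\bspB^{k,\tint}_{\ell'}$. Its support coincides with that of the same B-spline, so its activity status in the polar hierarchy equals its status in the intermediate hierarchy. It is therefore in $\hBasis{\bsp}^{k,\tint}$, and by \cref{thm:polar-splines}\ref{item:nnz-basis-bottom} it has zero values (and zero first derivatives for $k=0$) at $\cV=0$. Combining these facts gives the counts $3$, $2$ and $0$ at level $\ell$.

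The main obstacle is the dichotomy in the first step. It depends on checking, from the extraction in \cref{app:extraction}, that the union of supports of $\polar^0_{i,\ell'-1}$, $i=1,2,3$, is exactly the bottom strip, and that no other $0$-form support touches $\cV=0$. I would verify this from the nonzero columns of $\widehat{\mat{E}}^0$, which couple all first-row and second-row B-splines in $\cV$.
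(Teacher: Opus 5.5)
Your overall plan is the right one: show that the level-$\ell$ pole functions are active, that pole functions of every other level are not, and handle all remaining functions through \cref{thm:polar-splines}\ref{item:limited-influence}. However, the dichotomy your plan rests on is false.

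Since $\cV_1=\cdots=\cV_{\degV+1}=0$ and $\degV\ge 2$ (as the $C^1$ setting requires), every B-spline $\bsp^{\per}_{i,\degU}\bsp_{j,\degV}$ with $3\le j\le\degV+1$ has support $\suppwp{\bsp^{\per}_{i,\degU}}\times[0,\cV_{j+\degV+1}]$. This support reaches the bottom edge while being local in $\cU$. These functions have vanishing $1$-jets and are left unchanged by the extraction, since they fall in the identity block of $\mat{E}^0$. Assumption~\ref{ass:refinement-domains} therefore allows, e.g., $\tpDomain_{m+1}$ to be the support of a single such level-$m$ function. That domain contains some, but not all, level-$m$ elements adjacent to the pole; this is exactly the spiral mesh of \cref{subsec:maxwell}. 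Hence neither of your claims holds: that no part of the level-$\ell$ bottom strip lies in $\tpDomain_{\ell+1}$, and that $\tpDomain_{\ell+1}$ does not meet $\cV=0$.

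You also conflate two strips. $\bsp^{0,\tint}_{1,\ell}$ is supported on the single row $[0,1]\times[0,\cV_{\degV+2,\ell}]$, but $\polar^0_{i,\ell}$, $i=1,2,3$, are supported on the two-row strip $[0,1]\times[0,\cV_{\degV+3,\ell}]$. So their supports are not contained in the level-$\ell$ bottom strip. Moreover, for $\ell'<\ell$ what you need is that the support is \emph{contained in} $\tpDomain_{\ell'+1}$, not that it \emph{contains} a strip lying there.

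The non-containment half is easy to repair without the dichotomy. Some active level-$\ell$ element $Q$ touches $\cV=0$ and has interior disjoint from $\tpDomain_{\ell+1}$. Every pole support of level $\ge\ell$ is a full-width strip meeting the interior of $Q$. Hence level-$\ell$ pole functions are never deactivated, and those of level $>\ell$ are never activated.

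What is genuinely missing is an upgrade step. You need to pass from \emph{all level-$m$ elements touching $\cV=0$ lie in $\tpDomain_{m+1}$} to $[0,1]\times[0,\cV_{\degV+3,m}]\subseteq\tpDomain_{m+1}$. This is needed with $m=\ell'<\ell$ for deactivation, and with $m=\ell-1$ to place the level-$\ell$ pole supports inside $\tpDomain_\ell$. This is where Assumption~\ref{ass:refinement-domains} really enters. It is the content of the paper's equivalence $[0,1]\times[0,\cV_{\degV+3,\ell+1}]\subset\tpDomain_{\ell+1}\Leftrightarrow[0,1]\times[0,\cV_{\degV+3,\ell}]\subset\tpDomain_{\ell+1}$.

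The argument runs as follows. Suppose a union of level-$m$ polar $0$-form supports covers a full-width neighbourhood of $\cV=0$. Then either some $\polar^0_{i,m}$, $i\le 3$, is among them, or the $\cU$-supports of the functions $\bsp^{\per}_{i,\degU}\bsp_{j,\degV}$, $3\le j\le\degV+1$, used must cover $[0,1]$. Each of these has $\cV$-extent $[0,\cV_{j+\degV+1,m}]\supseteq[0,\cV_{\degV+3,m}]$. With this covering argument in place of your dichotomy, the rest of your proof goes through.
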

\begin{proof}
    We prove the claim for $k = 0$, the other cases can be shown analogously.
    For any level $\ell$, consider the first three functions in $\polarB^0_\ell$.
    For all three, the closure of their individual support is equal to $[0,1] \times [0, \cV_{\degV_\ell+3, \ell}]$.
    This, in combination with Assumptions 1-2, implies the following: 
    \begin{equation*}
        [0,1] \times [0, \cV_{\degV_{\ell+1}+3, \ell+1}] \subset \tpDomain_{\ell+1} \Longleftrightarrow [0,1] \times [0, \cV_{\degV_\ell+3, \ell}] \subset \tpDomain_{\ell+1}\;.
    \end{equation*}
	The leftwards implication is trivial from Assumption~1, while the rightwards follows from Assumption~2.
    This implies that $\{(1,\ell+1),(2,\ell+1),(3,\ell+1)\} \subset \activeS{0,\polar}{}$ only if $(i,\ell) \notin \activeS{0,\polar}{}$ for any $i \in \{1, 2, 3\}$.
    The claim follows.
\end{proof}

\begin{corollary}\label{cor:similarities-hb-hp}
    Given Assumptions 1-2, the following claims hold:
    \begin{align}
        \hSpace{\bsp}^{0,\tint} \ni f \;,\; \jet{1}{f} = 0 &\Longrightarrow f \in \hSpace{\polar}^0\;,\\
        \hSpace{\bsp}^{1,\tint} \ni f = f_\cU\dcU + f_\cV\dcV \;,\; \jet{1}{f_\cU} = 0 = \jet{0}{f_\cV} &\Longrightarrow f \in \hSpace{\polar}^1\;,\\
        \left\{\hSpace{\bsp}^{2,\tint} \ni f = f_{\cU\cV}\dcU\wedge\dcV \;,\; \jet{0}{f_{\cU\cV}} = 0\right\} &= \hSpace{\polar}^2\;.
    \end{align}
\end{corollary}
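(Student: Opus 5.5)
The argument reduces each claim to a statement about the finitely many ``polar'' generating functions, using \cref{lem:active-intermediate-basis} and \cref{lem:active-polar-basis}. Let $\ell$ be the unique level of \cref{rem:unique-level}. By \cref{lem:active-polar-basis}, every function in $\hBasis{\polar}^k$ other than the level-$\ell$ polar functions $\polar^0_{i,\ell}$, $i=1,2,3$ (for $k=0$), or $\polar^1_{i,\ell}$, $i=1,2$ (for $k=1$), coincides with an element of $\hBasis{\bsp}^{k,\tint}$. Since the single-level polar spaces are built from $\bspS^{k,\tint}_\ell$, the level-$\ell$ polar functions also lie in $\hSpace{\bsp}^{k,\tint}$. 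Indeed, they are linear combinations of level-$\ell$ intermediate functions supported in $[0,1]\times[0,\cV_{\degV_\ell+3,\ell}]$, and I expect these to be active in $\hBasis{\bsp}^{k,\tint}$ by the same support argument as in the proof of \cref{lem:active-polar-basis}. Hence $\hSpace{\polar}^k\subseteq\hSpace{\bsp}^{k,\tint}$.

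For the $0$-form claim, take $f\in\hSpace{\bsp}^{0,\tint}$ with $\jet{1}{f}=0$. Write $f$ in the basis $\hBasis{\bsp}^{0,\tint}$ (\cref{thm:bspline-linear-ind}) and split it as $f=f_{\mathrm{pole}}+f_{\mathrm{rest}}$. Here $f_{\mathrm{pole}}$ collects the level-$\ell$ active functions whose $1$-jet at $\cV=0$ is nonzero: $\bsp^{0,\tint}_{1,\ell}$ and the $\bsp^{\per}_{i,\degU,\ell}\bsp_{2,\degV,\ell}$. The remainder $f_{\mathrm{rest}}$ consists of tensor-product B-splines with vanishing $1$-jet, and each of these is also active in $\hBasis{\polar}^0$ by \cref{lem:active-polar-basis}, so $f_{\mathrm{rest}}\in\hSpace{\polar}^0$. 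It then remains to show $f_{\mathrm{pole}}\in\hSpace{\polar}^0$. Since $\jet{1}{f_{\mathrm{pole}}}=-\jet{1}{f_{\mathrm{rest}}}=0$, $f_{\mathrm{pole}}$ is a level-$\ell$ intermediate spline whose $1$-jet vanishes. By \cref{thm:polar-splines}\ref{item:limited-influence} it therefore lies in $\polarS^0_\ell$. It is also supported near the pole, so it is a combination of $\polar^0_{i,\ell}$, $i=1,2,3$, and of level-$\ell$ B-splines with vanishing jet. All of these are active, so $f_{\mathrm{pole}}\in\hSpace{\polar}^0$. The $1$-form claim is handled identically, with the $\dcU$-components tested by $\jet{1}{\cdot}$ and the $\dcV$-components by $\jet{0}{\cdot}$.

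For the $2$-form equality, the inclusion ``$\supseteq$'' follows from $\hSpace{\polar}^2\subseteq\hSpace{\bsp}^{2,\tint}$ together with \cref{lem:active-polar-basis}, which says all functions in $\hBasis{\polar}^2$ vanish at $\cV=0$. The inclusion ``$\subseteq$'' repeats the splitting argument above with $\jet{0}{\cdot}$, now using the third item of \cref{thm:polar-splines}\ref{item:limited-influence}.

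The main obstacle is the step showing that the level-$\ell$ B-splines with vanishing jet, which appear in $\polarS^k_\ell$ expansions of $f_{\mathrm{pole}}$, are actually active. This requires checking that all level-$\ell$ intermediate functions touching the pole region are active simultaneously, which needs the equivalence of supports established in the proof of \cref{lem:active-polar-basis} together with \cref{ass:refinement-domains}. A cleaner route I would try first is to avoid expanding $f_{\mathrm{pole}}$ at all. Instead, note that the jet conditions form a linear system on the coefficients of the finitely many level-$\ell$ functions with nonzero jet, and compare its solution space directly with the span of the level-$\ell$ polar functions, using the full-rank property of $\widehat{\mat{E}}^k_\ell$ from \cref{thm:polar-splines}\ref{item:linear-ind-bottom} and the dimension counts in \ref{item:cardinality}.
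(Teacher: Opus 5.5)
Your splitting $f=f_{\mathrm{pole}}+f_{\mathrm{rest}}$ assumes that the only active functions in $\hBasis{\bsp}^{0,\tint}$ with nonzero $1$-jet are $\bsp^{0,\tint}_{1,\ell}$ and the level-$\ell$ splines $\bsp^{\per}_{i,\degU,\ell}\bsp_{2,\degV,\ell}$. That is false in general. \cref{cor:reduced-intermediate-basis} only gives $\ell'\geq\ell$, and \cref{ass:refinement-domains} allows refining just part of the ring of elements around the pole, as in \cref{subsec:maxwell}.

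For example, take $L=1$ and $\tpDomain_1=\suppwp{\bsp^{\per}_{1,\degU,0}\bsp_{3,\degV,0}}$, the support of a level-$0$ polar $0$-form touching $\cV=0$ (on a mesh with enough angular elements). Then $\ell=0$, and $\bsp^{\per}_{1,\degU,0}\bsp_{2,\degV,0}$ is inactive. The level-$1$ splines $\bsp^{\per}_{i,\degU,1}\bsp_{2,\degV,1}$ supported in $\tpDomain_1$ are active. They have nonzero $1$-jet, and they are not in $\hBasis{\polar}^0$, because no level-$1$ polar basis function has this form.

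Consequently:
\begin{itemize}
\item $f_{\mathrm{rest}}$ may contain nonzero-jet functions outside $\hBasis{\polar}^0$, so your justifications of $f_{\mathrm{rest}}\in\hSpace{\polar}^0$ and of $\jet{1}{f_{\mathrm{pole}}}=0$ collapse.
\item Your $1$- and $2$-form arguments have the same defect.
\item The same example refutes your expectation that every level-$\ell$ intermediate function in $[0,1]\times[0,\cV_{\degV_\ell+3,\ell}]$ is active. The inclusion $\hSpace{\polar}^k\subseteq\hSpace{\bsp}^{k,\tint}$ is still true, because a deactivated level-$\ell'$ function supported in $\tpDomain_{\ell'}$ stays in the hierarchical span. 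It is not needed for the two implications, though, and for $k=2$ it is immediate from \cref{lem:active-polar-basis}.
\item Your ``main obstacle'' and ``cleaner route'' both stay at level $\ell$, so neither addresses this.
\end{itemize}

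The missing idea is that the jets of \emph{all} active functions with nonzero jet, across all levels $\ell'\geq\ell$, are linearly independent. Then $\jet{1}{f}=0$ kills all their coefficients at once.

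Expand $f$ in $\hBasis{\bsp}^{0,\tint}$. The value $f|_{\cV=0}$ is the coefficient of $\bsp^{0,\tint}_{1,\ell}$. Once that coefficient vanishes, $\partial_\cV f|_{\cV=0}=\sum c_{i,\ell'}\,\bsp_{2,\degV,\ell'}'(0)\,\bsp^{\per}_{i,\degU,\ell'}$, summed over the active $\bsp^{\per}_{i,\degU,\ell'}\bsp_{2,\degV,\ell'}$.

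Let $\omega_m$ be the trace of $\tpDomain_m$ on $\cV=0$. Every level-$(m-1)$ polar $0$-form support touching $\cV=0$ reaches at least $\cV_{\degV_{m-1}+3,m-1}$. So \cref{ass:refinement-domains} gives $\omega_m\times[0,\cV_{\degV_{m-1}+3,m-1}]\subseteq\tpDomain_m$ for $m\geq1$. Hence $\bsp^{\per}_{i,\degU,\ell'}\bsp_{2,\degV,\ell'}$ is active exactly when $\suppwp{\bsp^{\per}_{i,\degU,\ell'}}\subseteq\omega_{\ell'}$ but $\suppwp{\bsp^{\per}_{i,\degU,\ell'}}\not\subseteq\omega_{\ell'+1}$. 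The $\bsp^{\per}_{i,\degU,\ell'}$ involved therefore form a univariate periodic hierarchical B-spline basis for the nested sets $\omega_{\ell'}$. That basis is linearly independent, so all these coefficients vanish.

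What remains is a combination of active $\bsp^{\per}_{i,\degU,\ell'}\bsp_{j,\degV,\ell'}$ with $j\geq3$. Each is a level-$\ell'$ polar basis function with the same support, and activity depends only on level and support. So each is active in $\hBasis{\polar}^0$, and $f\in\hSpace{\polar}^0$.

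The $1$- and $2$-form claims follow the same way, testing $\dcU$-components with $\jet{1}{\cdot}$ and $\dcV$- and $\dcU\wedge\dcV$-components with $\jet{0}{\cdot}$. Note also that even in your single-level picture, an $f_{\mathrm{pole}}$ with zero jet is identically zero, by independence of the $\bsp^{\per}_{i,\degU,\ell}$. So no expansion in the $\polar^0_{i,\ell}$ was ever needed.
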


\begin{theorem}\label{thm:polar-linear-ind}
    For all $k = 0, 1, 2$, Assumptions 1-2 imply that the spline functions in $\hBasis{\polar}^k$ are linearly independent and, thus, form a basis for $\hSpace{\polar}^k$.
\end{theorem}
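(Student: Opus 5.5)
The plan is to reduce the claim to the abstract linear-independence result for hierarchical spaces built from nested, locally linearly independent single-level bases. This is the same mechanism used in \cref{thm:bspline-linear-ind}, following \cite{Giannelli:2012,Giannelli:2014}. Two ingredients are needed. The first is nestedness $\polarS^k_0 \subset \cdots \subset \polarS^k_L$, which was proved in the proposition above under Assumption~1. The second is local linear independence of each single-level basis $\polarB^k_\ell$, which is \cref{thm:polar-splines}~\cref{item:local-linear-independence} applied level by level.

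The abstract argument runs by induction on the levels. Suppose $\sum_{(i,\ell')\in\activeS{k,\polar}{}} c_{i,\ell'}\polar^k_{i,\ell'} = 0$. First restrict to the open region $\tpDomain_0\setminus\tpDomain_1$. By nestedness and the selection rule, only level-$0$ functions can be nonzero there. Active level-$\ell'$ functions with $\ell' \geq 1$ have support in $\tpDomain_{\ell'}\subseteq\tpDomain_1$, so they vanish on that region.

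Local linear independence of $\polarB^k_0$ on each element of $\tpDomain_0\setminus\tpDomain_1$ then forces $c_{i,0}=0$ for every active level-$0$ function whose support meets $\tpDomain_0\setminus\tpDomain_1$. By the definition of $\activeS{k,\polar}{1,0}$, every active level-$0$ function has $\supp \not\subseteq \tpDomain_1$. The remaining sum lives in levels $\geq 1$, and we repeat the argument on $\tpDomain_\ell\setminus\tpDomain_{\ell+1}$.

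One detail needs checking: "support not contained in $\tpDomain_{\ell}$" must imply that the support meets an open \emph{element} of level $\ell-1$ outside $\tpDomain_\ell$. This is where Assumption~2 enters. It makes each $\tpDomain_\ell$ a union of level-$(\ell-1)$ elements, since supports of polar $0$-forms are unions of elements, so the set difference is a union of full elements.

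The main obstacle is that the polar basis functions near the pole are not tensor-product B-splines. The element-wise argument must therefore use local linear independence of polar splines, not of B-splines. It must also cope with the fact that the supports of $\polar^0_{1,\ell},\polar^0_{2,\ell},\polar^0_{3,\ell}$ are entire annular strips.

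To handle this I would use \cref{lem:active-polar-basis}. It says that the only active functions that are not B-splines are the two or three pole functions from the unique level $\ell$ given by \cref{rem:unique-level}, while all others coincide with elements of $\hBasis{\bsp}^{k,\tint}$. At level $\ell$, the elements touching $\cV=0$ are not in $\tpDomain_{\ell+1}$, and they are covered by \cref{item:local-linear-independence} of \cref{thm:polar-splines}. That result was proved exactly for elements in the support of the pole functions, so the induction step at level $\ell$ goes through there.

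For levels other than $\ell$, no pole function is active, and the argument is identical to the B-spline case. One must still check that $\polarB^k_{\ell'}$ is used consistently: inactive pole functions of other levels never appear, so restricting to the relevant elements involves only functions identical to B-splines, which are locally linearly independent.

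Finally, linear independence of the generating set gives that it is a basis of its span $\hSpace{\polar}^k$ by definition.
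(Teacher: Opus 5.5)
Your proposal is correct and takes the same route as the paper: level-wise local linear independence of the polar bases (\cref{item:local-linear-independence} of \cref{thm:polar-splines}) combined with the abstract hierarchical result of \cite{Giannelli:2014}, which you simply unfold into the usual level-by-level induction over $\tpDomain_{\ell'}\setminus\tpDomain_{\ell'+1}$. The separate treatment of the pole via \cref{lem:active-polar-basis} is unnecessary, since \cref{item:local-linear-independence} already holds on every element at every level; also, not every level-$\ell$ element touching $\cV=0$ needs to lie outside $\tpDomain_{\ell+1}$, though your argument never actually uses that.
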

\begin{proof}
    The linear independence of the $k$-form functions in $\hBasis{\polar}^k$ follows from the local linear independence of the functions in single-level spaces $\polarS^{k}_\ell$, that was proved in \cref{item:local-linear-independence} of \cref{thm:polar-splines}, and the results in \cite{Giannelli:2014}.
\end{proof}

%% file: sections/exactness_polar.tex
\section{Exactness of the adaptively-refinable polar-spline complex}
\label{sec:exactness-polar}
In this section, we examine the cohomology groups of the constructed hierarchical polar-spline complex and show that it is exact.
We do so by first studying the intermediate complex $\hintComplex$ and then using the obtained results to characterize the cohomology of $\hpolarComplex$.

\input{./sections/exactness_hb}

\subsection{Exactness of the complex $\hpolarComplex$}
The zero-th cohomology of $\hpolarComplex$ can be characterized easily and we do so in the following result.
\begin{proposition}\label{prop:h0h1}
    $\dim H^0(\hpolarComplex) = 1$.
\end{proposition}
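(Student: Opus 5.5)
We show that the kernel of $d:\hSpace{\polar}^0\to\hSpace{\polar}^1$ consists exactly of the constants. Two inclusions are needed: constants lie in the kernel, and nothing else does.

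\emph{Constants belong to $\hSpace{\polar}^0$.} The hierarchical construction always includes the coarsest space, $\polarS^0_0 \subseteq \hSpace{\polar}^0$. This is the standard property that the hierarchical space contains every single-level space whose functions are not all deactivated. More concretely, it follows from the nestedness $\polarS^0_0\subset\cdots\subset\polarS^0_L$ (the propositions in \cref{sec:construction-hierarchical-specific}) and from the results of \cite{Giannelli:2014} under Assumption~\ref{ass:refinement-domains}, which give $\polarS^0_0 \subseteq \hSpace{\polar}^0 \subseteq \polarS^0_L$. By \cref{item:exactness} of \cref{thm:polar-splines}, $H^0(\complex{\polar}_0)\isomorphic\RR$, and the level-$0$ polar space contains the constants.

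If we prefer not to invoke that containment, there is a direct alternative. The level-$\ell$ polar $0$-forms form a partition of unity (they do so by construction of $\widehat{\mat{E}}^0$). Suitable combinations of active functions then reproduce $1$ level by level, using the two-scale relations \eqref{eq:two-scale-polar-1} and the usual hierarchical argument that deactivated level-$\ell$ functions are spanned by level-$(\ell+1)$ functions that are all active. Either way, $\RR \subseteq \ker d|_{\hSpace{\polar}^0}$.

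\emph{Nothing else is in the kernel.} Suppose $f\in\hSpace{\polar}^0$ satisfies $df=0$. Since $\hSpace{\polar}^0 \subseteq \polarS^0_L$, the function $f$ lies in the finest single-level polar space. The pullback satisfies $\partial_\cU f = \partial_\cV f = 0$ on the connected domain $\qDomain$, so $f$ is constant. This also follows from $H^0(\complex{\polar}_L)\isomorphic\RR$ in \cref{thm:polar-splines}. Hence $\ker d|_{\hSpace{\polar}^0}=\RR$, and since $H^0$ is simply this kernel, $\dim H^0(\hpolarComplex)=1$.

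The only step that needs care is justifying $\RR\subseteq\hSpace{\polar}^0$. We expect to handle it through the nested inclusion $\polarS^0_0\subseteq\hSpace{\polar}^0$, which holds because every level-$0$ function is either active or expressible through finer functions supported in $\tpDomain_1$. Those finer functions are in turn active or refinable, and Assumption~\ref{ass:refinement-domains} guarantees that the refined pieces are properly covered.
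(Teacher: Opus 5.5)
Your proof is correct and takes the same route as the paper, whose one-line argument is that $df=0$ forces $f$ to be constant on the connected domain. You also justify a step the paper leaves implicit, namely that constants actually lie in $\hSpace{\polar}^0$, via $\polarS^0_0\subseteq\hSpace{\polar}^0$. Two small corrections: that inclusion follows from nestedness plus the fact that the children of a deactivated function in the two-scale relation are supported inside its support, which comes from local linear independence rather than Assumption 2; and your general remark that the hierarchical space contains ``every single-level space whose functions are not all deactivated'' is false beyond level $0$.
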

\begin{proof}
    If $f \in \hSpace{\polar}^0$ is such that $df = 0$, then $f$ must be constant on all of $\tpDomain$, and so the zero-th cohomology is one-dimensional.
\end{proof}

For characterizing $H^1(\hpolarComplex)$ and $H^2(\hpolarComplex)$, our main tool will be the following short exact sequence of complexes: $\hpolarComplex \rightarrow  \hintComplex \rightarrow \hintComplex / \hpolarComplex$.
This implies the following long exact sequence connecting the cohomologies of the different complexes:
\begin{equation}\label{eq:longExactSequence}
    \begin{tikzcd}
        H^0(\hpolarComplex) \arrow[d] & H^1(\hpolarComplex) \arrow[d] & H^2(\hpolarComplex) \arrow[d] \\
        H^0(\hintComplex) \arrow[d] \ar[draw=none]{r}[name=X, anchor=center]{}  & H^1(\hintComplex) \arrow[d] \ar[draw=none]{r}[name=Y, anchor=center]{}  & H^2(\hintComplex) \arrow[d] \\
        H^0(\hintComplex / \hpolarComplex) \ar[rounded corners,
        to path={ -- ([yshift=-2ex]\tikztostart.south)
                  -| (X.center) \tikztonodes
                  |- ([yshift=2ex]\tikztotarget.north)
                  -- (\tikztotarget)}]{ruu}[at end]{} & H^1(\hintComplex / \hpolarComplex) \ar[rounded corners,
                  to path={ -- ([yshift=-2ex]\tikztostart.south)
                            -| (Y.center) \tikztonodes
                            |- ([yshift=2ex]\tikztotarget.north)
                            -- (\tikztotarget)}]{ruu}[at end]{} & H^2(\hintComplex / \hpolarComplex)
    \end{tikzcd}
\end{equation}
We will approach the desired characterization in the same way as in the proof of \cref{item:exactness} in \cref{thm:polar-splines}.
That is, we first address $H^2(\hpolarComplex)$ and then $H^1(\hpolarComplex)$.

In particular, since $\hintComplex$ is reduced at the pole, we immediately get the desired result for the second cohomology of the polar-spline complex.
\begin{proposition}\label{prop:h2}
    Given Assumptions 1-3, $H^2(\hpolarComplex) = 0$.
\end{proposition}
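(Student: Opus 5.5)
The plan is to copy the argument for $H^2(\complex{\polar})=0$ in \cref{thm:polar-splines}, replacing the single-level complexes by the hierarchical ones and invoking \cref{lem:reduced-complex} with $\complex{V} = \hintComplex$ and $\complex{W} = \hpolarComplex$. The lemma has three hypotheses: (i) $\hintComplex$ is reduced at the pole; (ii) $H^2(\hintComplex)=0$; (iii) the two jet conditions linking $\hSpace{\polar}^1$ and $\hSpace{\polar}^2$ to $\hSpace{\bsp}^{1,\tint}$ and $\hSpace{\bsp}^{2,\tint}$.

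Hypothesis (iii) is exactly the content of the second and third statements of \cref{cor:similarities-hb-hp}, which are available under Assumptions 1--2. Hypothesis (ii) is taken from the preceding subsection on the exactness of $\hintComplex$. That subsection is where Assumption 3 enters, as the tensor-product-type admissibility condition on the hierarchical mesh away from the pole. Its main result gives $H^1(\hintComplex)=0$ and $H^2(\hintComplex)=0$, so I only need to cite it.

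Hypothesis (i) needs an explicit check, and I would do it constructively, as in the proof of \cref{thm:intermediate-complex}. Let $\ell$ be the unique level from \cref{lem:active-intermediate-basis} and \cref{rem:unique-level}, i.e.\ the level of the coarsest elements touching $\cV=0$. I would choose $g(\cV)=\bsp_{2,\degV,\ell}(\cV)$, $\vSpace{V}^0_\cU=\bspS[\pkntsU_\ell]$ and $\vSpace{V}^1_\cU=\bspS[d\pkntsU_\ell]$. The checked spaces $\check{\vSpace{V}}^{\bullet}$ are then the spans of the active hierarchical functions whose relevant jets (the $1$-jet for $0$-forms and $\dcU$-components, the $0$-jet for $\dcV$-components and $2$-forms) vanish.

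The step I expect to be the main obstacle is showing that every active function with a nonzero jet at $\cV=0$ has level exactly $\ell$, and that these functions span the required pole parts. Every element touching the bottom boundary has level at least $\ell$. Under Assumption 2, if some element in the row adjacent to the pole is refined to level $\ell+1$, then the whole level-$\ell$ polar neighbourhood $[0,1]\times[0,\cV_{\degV_\ell+3,\ell}]$ lies in $\tpDomain_{\ell+1}$. This is the argument in the proof of \cref{lem:active-polar-basis}.

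Assuming that, the active level-$\ell$ functions with nonzero $1$-jet are $\bsp^{0,\tint}_{1,\ell}$ together with all $\bsp^\per_{i,\degU,\ell}\bsp_{2,\degV,\ell}$. These give $\RR\oplus g\,\vSpace{V}^0_\cU$ by partition of unity, exactly as in \cref{thm:intermediate-complex}. The analogous statement for the $1$- and $2$-forms gives $g\,\vSpace{V}^1_\cU\,\dcU$, $\vSpace{V}^0_\cU 1_\cV\,\dcV$ and $\vSpace{V}^1_\cU 1_\cV\,\dcU\wedge\dcV$ as the pole parts.

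One subtlety remains in the decomposition: a function whose $1$-jet is nonzero could in principle come from a finer level. Finer-level functions whose support reaches $\cV=0$ would have support containing $[0,1]\times\{0\}$, which would force that whole strip to be refined. That contradicts the definition of $\ell$ as the level of the coarsest element touching the bottom boundary. So the direct sum decomposition holds, hypothesis (i) is satisfied, and \cref{lem:reduced-complex} yields $H^2(\hpolarComplex)=0$.
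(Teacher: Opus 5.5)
Your overall route is the paper's: apply \cref{lem:reduced-complex} with $\complex{V}=\hintComplex$ and $\complex{W}=\hpolarComplex$, taking the jet conditions from \cref{cor:similarities-hb-hp} and $H^2(\hintComplex)=0$ from \cref{cor:exactness-intermediate-complex}. The paper gets reducedness by citing \cref{lem:intermediate-reduced-complex}. Your own verification of reducedness rests on a false claim: that every active function of $\hintComplex$ with a nonzero jet at $\cV=0$ has level exactly $\ell$, and that all $\bsp^\per_{i,\degU,\ell}\bsp_{2,\degV,\ell}$ are active.

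The argument of \cref{lem:active-polar-basis} concerns only $\polar^0_{1,\ell},\polar^0_{2,\ell},\polar^0_{3,\ell}$, whose supports are the whole strip $[0,1]\times[0,\cV_{\degV_\ell+3,\ell}]$. It shows only that if $\tpDomain_{\ell+1}$ contains the level-$(\ell+1)$ polar neighbourhood, then it contains the level-$\ell$ one. It does not prevent refining only part of the strip, and Assumption 2 allows exactly that. The level-$\ell$ polar $0$-forms include $\bsp^\per_{i,\degU,\ell}\bsp_{3,\degV,\ell}$, whose support reaches $\cV=0$ (as $\degV\ge 2$) but is local in $\cU$. So $\tpDomain_{\ell+1}$ may be such a support; this is the partial pole refinement used in \cref{subsec:maxwell}. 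In that case $\bsp^\per_{i,\degU,\ell}\bsp_{2,\degV,\ell}$ is deactivated, level-$(\ell+1)$ functions $\bsp^\per_{i',\degU,\ell+1}\bsp_{2,\degV,\ell+1}$ become active, and $\bsp^{0,\tint}_{1,\ell}$ stays active, so $\ell$ is unchanged. Your ``subtlety'' paragraph makes the same mistake. Only $\bsp^{0,\tint}_{1,\ell'}$ has support containing $[0,1]\times\{0\}$; the functions $\bsp^\per_{i,\degU,\ell'}\bsp_{2,\degV,\ell'}$ reach $\cV=0$ only over the $\cU$-support of $\bsp^\per_{i,\degU,\ell'}$.

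Consequently your decomposition with $\vSpace{V}^0_\cU=\bspS[\pkntsU_\ell]$ and $\vSpace{V}^1_\cU=\bspS[d\pkntsU_\ell]$ fails. The active function $\bsp^\per_{i',\degU,\ell+1}\bsp_{2,\degV,\ell+1}$ has $1$-jet $(0,\bsp'_{2,\degV,\ell+1}(0)\,\bsp^\per_{i',\degU,\ell+1})$. Since $\bsp^\per_{i',\degU,\ell+1}\notin\bspS[\pkntsU_\ell]$ once knots are inserted in $\cU$, this function does not lie in $\RR\oplus g\,\bspS[\pkntsU_\ell]\oplus\check{\vSpace{V}}^0$. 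The analogous failure occurs for the $1$- and $2$-forms.

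The correct choice, made in \cref{lem:intermediate-reduced-complex}, takes $\hSpace{V}^0_\cU$ and $\hSpace{V}^1_\cU$ as the spans of the $\cU$-factors of the active functions indexed by $\ractiveS{0,\bsp^\tint}{}$ and $\ractiveS{1,\bsp^\tint}{}$. These are univariate hierarchical spaces with contributions from all levels $\ell'\ge\ell$ (\cref{cor:reduced-intermediate-basis}). Each such level-$\ell'$ function $B$ is then split, using the refinement relation in $\cV$ between $\bsp_{2,\degV,\ell}$ and the level-$\ell'$ B-splines, into $B_1\in g\,\hSpace{V}^0_\cU$ with $\jet{1}{B_1}=\jet{1}{B}$ plus a remainder $B_2\in\check{\vSpace{V}}^0$. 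With this choice, $\frac{d}{d\cU}\hSpace{V}^0_\cU\subset\hSpace{V}^1_\cU$ is no longer automatic and must also be checked. Alternatively, simply cite that lemma for hypothesis (i), as the paper does; the rest of your argument then goes through.
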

\begin{proof}
    \cref{lem:intermediate-reduced-complex} shows that $\hintComplex$ is reduced at the pole.
    Moreover, by \cref{cor:similarities-hb-hp}, $\hpolarComplex$ is a subcomplex of $\hintComplex$ that satisfies the conditions laid out in \cref{lem:reduced-complex}.
    Thus, the result follows immediately from \cref{lem:reduced-complex} and the fact that $H^2(\hintComplex) = 0$ from \cref{cor:exactness-intermediate-complex}.
\end{proof}

Finally, using a counting argument near the pole, we can now show that $H^1(\hpolarComplex) = 0$.
\begin{proposition}
    Given Assumptions 1-3, $\dimwp{H^1(\hpolarComplex)} = 0$.
\end{proposition}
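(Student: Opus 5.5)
We mirror the Euler-characteristic argument from the proof of \cref{item:exactness} in \cref{thm:polar-splines}, but localized to the pole, since the hierarchical spaces are no longer globally counted by simple formulas. The plan is to use the long exact sequence \eqref{eq:longExactSequence} together with the exactness of $\hintComplex$ from \cref{cor:exactness-intermediate-complex} ($H^0\isomorphic\RR$, $H^1=H^2=0$). With $H^1(\hintComplex)=0$, exactness of the segment $H^0(\hintComplex)\to H^0(\hintComplex/\hpolarComplex)\to H^1(\hpolarComplex)\to 0$ gives
\begin{equation*}
  \dim H^1(\hpolarComplex) = \dim H^0(\hintComplex/\hpolarComplex) - \operatorname{rank}\bigl(H^0(\hintComplex)\to H^0(\hintComplex/\hpolarComplex)\bigr).
\end{equation*}
The map $H^0(\hintComplex)\to H^0(\hintComplex/\hpolarComplex)$ sends constants to their class in the quotient. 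Since constants lie in $\hSpace{\polar}^0$, this map is zero. Hence it suffices to show $H^0(\hintComplex/\hpolarComplex)=0$, or equivalently, via the Euler characteristic of the quotient complex, that the alternating sum of cohomology dimensions of the quotient vanishes with $H^1$ and $H^2$ of the quotient known.

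An alternative route, which I think is cleaner, is to compute the Euler characteristic of the quotient directly. From \cref{prop:h2} and the long exact sequence, $H^2(\hintComplex/\hpolarComplex)=0$ and $H^1(\hintComplex/\hpolarComplex)\isomorphic H^2(\hpolarComplex)=0$. Thus $\dim H^0(\hintComplex/\hpolarComplex) = \chi(\hintComplex/\hpolarComplex) = \sum_k(-1)^k\bigl(\dim\hSpace{\bsp}^{k,\tint}-\dim\hSpace{\polar}^k\bigr)$. The core step is therefore a counting near the pole, using the bases from \cref{thm:bspline-linear-ind,thm:polar-linear-ind}. By \cref{lem:active-intermediate-basis,lem:active-polar-basis}, all active functions away from the pole coincide in the two hierarchical bases. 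Near the pole, everything is from the single level $\ell$ of \cref{rem:unique-level}, because the pole-touching elements are all of that level and Assumption~2 forces whole rings. So the difference in counts equals the single-level difference at level $\ell$. By \cref{item:cardinality} of \cref{thm:polar-splines}, this gives $(\pndofU_\ell-2)-(2\pndofU_\ell-2)+\pndofU_\ell=0$. Hence $\dim H^0$ of the quotient is $0$, and so $H^1(\hpolarComplex)=0$.

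The main obstacle is justifying that the difference in active-function counts reduces exactly to the single-level counts at level $\ell$. This requires two facts. First, the level-$\ell$ intermediate functions removed or merged by $\mat{E}^k_\ell$ (those with non-vanishing low-order jets: the first function and the $j=2$ row for $0$-forms, the corresponding $1$-form rows, and the $j=1$ row for $2$-forms) must be active in $\hBasis{\bsp}^{k,\tint}$ exactly when the level-$\ell$ polar pole functions are active. Second, no finer-level pole-touching functions may be active. I would establish both by the support argument in the proof of \cref{lem:active-polar-basis}: all these functions have supports equal to full rings $[0,1]\times[0,\cV_{\cdot,\ell}]$, and Assumption~2 makes their activity equivalent. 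I would check each $k$ separately. If this bookkeeping is awkward, the fallback is to exhibit the quotient spaces directly as the finite-dimensional jet data at level $\ell$ and verify that $d$ on $H^0$ of the quotient is injective.
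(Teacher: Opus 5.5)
Your homological reduction is correct and matches the paper's: from \cref{cor:exactness-intermediate-complex}, \cref{prop:h2} and the long exact sequence you get $\dimwp{H^1(\hpolarComplex)}=\dimwp{H^0(\hintComplex/\hpolarComplex)}=\sum_k(-1)^k\dimwp{\hSpace{\bsp}^{k,\tint}/\hSpace{\polar}^k}$. The gap is in the counting step. You claim that all pole-touching elements belong to the single level $\ell$ and that \cref{ass:refinement-domains} forces whole rings; this is false. \cref{rem:unique-level} only says that $\ell$ is the level of the \emph{coarsest} element touching $\cV=0$, and the examples in \cref{sec:numerics} refine only some of the elements adjacent to the pole.

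Here is a concrete counterexample. Since $\degV\geq 2$, we have $\kntV_{3,0}=0$. So the polar $0$-form $\bsp^{\per}_{i,\degU,0}\bsp_{3,\degV,0}$ has support $[\kntU_{i,0},\kntU_{i+\degU+1,0}]\times[0,\kntV_{\degV+4,0}]$, which touches the pole without being a ring. Take $L=1$ and let $\tpDomain_1$ be this support (on a level-$0$ mesh with enough elements), so that $\ell=0$. Then:
\begin{itemize}
\item $\polar^0_{1,0},\polar^0_{2,0},\polar^0_{3,0}$ stay active.
\item In $\hBasis{\bsp}^{0,\tint}$ the function $\bsp^{\per}_{i,\degU,0}\bsp_{2,\degV,0}$ is deactivated.
\item With the dyadic refinement of \cref{sec:numerics}, $\degU+2$ functions $\bsp^{\per}_{i',\degU,1}\bsp_{2,\degV,1}$ become active. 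They have non-zero $1$-jets, and none of them is a polar basis function.
\end{itemize}
The same happens for the $1$-forms along $\dcV$ with non-zero $0$-jet, the $1$-forms along $\dcU$ with non-zero $1$-jet, and the $2$-forms with non-zero $0$-jet. The three quotient dimensions therefore become $\pndofU_0-2+(\degU+1)$, $2\pndofU_0-2+2(\degU+1)$ and $\pndofU_0+(\degU+1)$, not the single-level values you use.

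Both of your ``two facts'' fail in this example. The support claim behind them is also wrong: only $\bsp^{0,\tint}_{1,\ell}$ and the polar pole functions have full-ring supports. Each individual $\bsp^{\per}_{i,\degU,\ell}\bsp_{2,\degV,\ell}$, and each of the corresponding $1$- and $2$-forms, covers only a few angular elements.

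The alternating sum still vanishes in this example, but for the reason the paper gives, and that is the idea missing from your proposal. Via \cref{lem:intermediate-reduced-complex}, the active functions with non-zero jets, from all levels $\ell'\geq\ell$, are collected into multi-level univariate spaces $\hSpace{V}^0_\cU$ and $\hSpace{V}^1_\cU$. The quotient dimensions are then $(1+\dimwp{\hSpace{V}^0_\cU})-3$, $\dimwp{\hSpace{V}^1_\cU}+\dimwp{\hSpace{V}^0_\cU}-2$ and $\dimwp{\hSpace{V}^1_\cU}$.

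The cancellation comes from the same space $\hSpace{V}^0_\cU$ governing both the $0$-forms ($g(\cV)\hSpace{V}^0_\cU$) and the $\dcV$-components of $1$-forms ($\hSpace{V}^0_\cU 1_\cV$). Likewise, $\hSpace{V}^1_\cU$ governs both the $\dcU$-components and the $2$-forms. This holds regardless of which levels contribute; it does not depend on these being level-$\ell$ spaces of dimension $\pndofU_\ell$.

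Your fallback (identify the quotients with jet data and show that $d$ is injective on $\hSpace{\bsp}^{0,\tint}/\hSpace{\polar}^0$) could work. It does so only if the $\cU$-dependence of the jet data is taken in these multi-level spaces rather than ``at level $\ell$''.
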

\begin{proof}
    As in the proof of \cref{item:exactness} in \cref{thm:polar-splines}, we have that:
    \begin{equation*}
        \begin{split}
            \dimwp{H^1(\hpolarComplex)} &= \dimwp{H^0(\hintComplex/\hpolarComplex)}\;\\
            &= \dimwp{\hSpace{\bsp}^{0,\tint}/\hSpace{\polar}^0} - \dimwp{\hSpace{\bsp}^{1,\tint}/\hSpace{\polar}^1} + \dimwp{\hSpace{\bsp}^{2,\tint}/\hSpace{\polar}^2}\;.
        \end{split}
    \end{equation*}
    Then, using \cref{cor:similarities-hb-hp,lem:intermediate-reduced-complex}, we see that the intermediate hierarchical spaces and the polar spline spaces are identical away from the pole.
    Therefore, the dimensions of the quotient spaces $\hSpace{\bsp}^{k,\tint}/\hSpace{\polar}^k$ only depend on the differences between the spaces near the poles.
    For instance, by ignoring all splines $f$ for which $\jet{1}{f} = 0$, we can deduce that the dimension of $\hSpace{\bsp}^{0,\tint}/\hSpace{\polar}^0$ is simply $(1 + \dimwp{\hSpace{V}_\cU^0}) - 3$.
    By similar arguments, we can also deduce that the dimensions of $\hSpace{\bsp}^{1,\tint}/\hSpace{\polar}^1$ and $\hSpace{\bsp}^{2,\tint}/\hSpace{\polar}^2$ are $\dimwp{\hSpace{V}_\cU^1} + \dimwp{\hSpace{V}_\cU^0} - 2$ and $\dimwp{\hSpace{V}_\cU^1}$, respectively.
    Therefore,
    \begin{equation*}
        \dimwp{H^1(\hpolarComplex)} =
        \left( 1 + \dimwp{\hSpace{V}_\cU^0} - 3 \right)
        - \left( \dimwp{\hSpace{V}_\cU^1} + \dimwp{\hSpace{V}_\cU^0} - 2 \right)
        + \left( \dimwp{\hSpace{V}_\cU^1} \right) = 0\;.
    \end{equation*}
\end{proof}

We collect the above results in the following main result of this paper.
\begin{theorem}\label{thm:exactness}
    Given Assumptions 1-3, the hierarchical polar-spline complex $\hpolarComplex$ is exact, i.e.,
    \begin{equation*}
        H^0(\hpolarComplex) \isomorphic \RR, \quad H^1(\hpolarComplex) = H^2(\hpolarComplex) = 0\;.
    \end{equation*}
\end{theorem}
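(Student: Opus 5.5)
The theorem collects the three propositions proved just above, so the proof will be an assembly of those results. There is one claim per cohomology group, and I will treat them in turn.

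\textbf{The zeroth group.} I would invoke \cref{prop:h0h1}. If $f \in \hSpace{\polar}^0$ satisfies $df = 0$, then $f$ is constant on the connected domain $\tpDomain$. The constant function $1$ belongs to $\hSpace{\polar}^0$: it lies in $\polarS^0_0$ by partition of unity, and the hierarchical construction reproduces the coarsest space, $\polarS^0_0 \subset \hSpace{\polar}^0$, since the generating functions satisfy a truncation/partition-type argument. Hence $H^0(\hpolarComplex) \isomorphic \RR$.

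\textbf{The second group.} I would invoke \cref{prop:h2}. The intermediate hierarchical complex $\hintComplex$ is reduced at the pole by \cref{lem:intermediate-reduced-complex}, and it satisfies $H^2(\hintComplex)=0$ by \cref{cor:exactness-intermediate-complex}; both results need Assumption 3. By \cref{cor:similarities-hb-hp}, the subcomplex $\hpolarComplex$ meets the two hypotheses of the second part of \cref{lem:reduced-complex}. That lemma then gives $H^2(\hpolarComplex)=0$.

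\textbf{The first group.} I would use the long exact sequence \eqref{eq:longExactSequence}. From \cref{cor:exactness-intermediate-complex} we have $H^1(\hintComplex)=H^2(\hintComplex)=0$. The map $H^0(\hpolarComplex)\to H^0(\hintComplex)$ is an isomorphism, since both groups are the constants. This yields $H^1(\hpolarComplex)\isomorphic H^0(\hintComplex/\hpolarComplex)$ and $H^1(\hintComplex/\hpolarComplex)\isomorphic H^2(\hpolarComplex)=0$. The quotient complex also has vanishing $H^2$, so its Euler characteristic equals $\dim H^0$ of the quotient. I would then compute the dimensions of the quotients $\hSpace{\bsp}^{k,\tint}/\hSpace{\polar}^k$, exactly as in the preceding proposition:
\begin{itemize}
  \item $1+\dim\hSpace{V}^0_\cU-3$ for $k=0$;
  \item $\dim\hSpace{V}^1_\cU+\dim\hSpace{V}^0_\cU-2$ for $k=1$;
  \item $\dim\hSpace{V}^1_\cU$ for $k=2$.
\end{itemize}
Their alternating sum is zero, so $H^1(\hpolarComplex)=0$.

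\textbf{Main obstacle.} The delicate step is this dimension count for the quotients. It relies on two facts. First, by \cref{lem:active-polar-basis}, all near-pole polar functions come from the single level $\ell$ of \cref{rem:unique-level}. Second, by \cref{cor:similarities-hb-hp}, the two hierarchical spaces agree on all forms with vanishing jets. Together these show that each quotient is isomorphic to the space of jets of $\hSpace{\bsp}^{k,\tint}$ modulo the jets of $\hSpace{\polar}^k$. The jets of $\hSpace{\polar}^k$ are spanned by the $3$, $2$ and $0$ level-$\ell$ functions counted in \cref{lem:active-polar-basis}, and these are independent by \cref{thm:polar-splines}\ref{item:linear-ind-bottom}. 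The jets of $\hSpace{\bsp}^{k,\tint}$ are described by the reduced decomposition of \cref{lem:intermediate-reduced-complex}.

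Combining the three parts gives the theorem.
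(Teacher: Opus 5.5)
Your proposal is correct and follows the paper's argument essentially step for step: $H^0$ comes from \cref{prop:h0h1}, $H^2$ from applying \cref{lem:reduced-complex} to the reduced complex $\hintComplex$ (\cref{prop:h2}), and $H^1$ from the long exact sequence \eqref{eq:longExactSequence} together with the same near-pole dimension count of the quotients $\hSpace{\bsp}^{k,\tint}/\hSpace{\polar}^k$. Your additional remarks make explicit two steps the paper leaves implicit: that constants lie in $\hSpace{\polar}^0$, and that $H^0(\hpolarComplex)\to H^0(\hintComplex)$ is an isomorphism, which is what yields $H^1(\hpolarComplex)\isomorphic H^0(\hintComplex/\hpolarComplex)$.
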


%% file: sections/exactness_hb.tex
\subsection{Exactness of the complex $\complex{\hSpace{\bsp}}^{,\tint}$}
\label{sec:exactness-hb}

In this section, we present some intermediate results on the complex $\hintComplex$, and place an additional assumption that is required for guaranteeing the right cohomological structure of the complex.
For this intermediate complex, consider the unique level $\ell$ that is referenced in \cref{lem:active-intermediate-basis}, and consider the following sets of active function indices:
\begin{equation}\label{eq:reduced-active-sets}
    \small
    \begin{split}
        \ractiveS{0,\bsp^\tint}{} &:= \left\{
            (i,\ell') \in \activeS{0,\bsp^\tint}{} ~:~ \jet{1}{\bsp^{0,\tint}_{i,\ell'}} \neq 0
        \right\}\backslash \{ (1,\ell) \}\;,\\
        \ractiveS{1,\bsp^\tint}{} &:= \left\{
            (i,\ell') \in \activeS{1,\bsp^\tint}{} ~:~
            \bsp^{1,\tint}_{i,\ell'} = \bsp^{1,\tint}_{i,\ell',\cU}\;\dcU
            \text{ and }
            \jet{1}{\bsp^{1,\tint}_{i,\ell',\cU}} \neq 0
        \right\} \\
        &\qquad\bigcup
        \left\{
            (i,\ell') \in \activeS{1,\bsp^\tint}{} ~:~
            \bsp^{1,\tint}_{i,\ell'} = \bsp^{1,\tint}_{i,\ell',\cV}\;\dcV
            \text{ and }
            \jet{0}{\bsp^{1,\tint}_{i,\ell',\cV}} \neq 0
        \right\}\;,\\
        \ractiveS{2,\bsp^\tint}{} &:= \left\{
            (i,\ell') \in \activeS{2,\bsp^\tint}{} ~:~
            \bsp^{2,\tint}_{i,\ell'} = \bsp^{2,\tint}_{i,\ell',\cU\cV}\;\dcU\wedge\dcV
            \text{ and }
            \jet{0}{\bsp^{2,\tint}_{i,\ell',\cU\cV}} \neq 0
        \right\}\;.
    \end{split}
\end{equation}

\begin{corollary}\label{cor:reduced-intermediate-basis}
    Given Assumptions 1-2 and the unique level $\ell$ in \cref{lem:active-intermediate-basis}, the sets $\ractiveS{0,\bsp^\tint}{}$, $\ractiveS{1,\bsp^\tint}{}$, and $\ractiveS{2,\bsp^\tint}{}$ are such that:
    \begin{equation*}
        (i, \ell') \in \ractiveS{k,\bsp^\tint}{} \Longrightarrow \ell' \geq \ell\;.
    \end{equation*}
\end{corollary}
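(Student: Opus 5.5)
The plan is to argue by contradiction, using the characterisation of the level $\ell$ in \cref{rem:unique-level}: $\ell$ is the level of the coarsest element touching the bottom boundary $\cV=0$. Suppose $(i,\ell')\in\ractiveS{k,\bsp^\tint}{}$ with $\ell'<\ell$. Then the corresponding active function $\bsp^{k,\tint}_{i,\ell'}$ has a non-vanishing jet at the pole. Membership in $\ractiveS{k,\bsp^\tint}{}$ only selects functions whose relevant component has $\jet{1}{\cdot}\neq0$ (respectively $\jet{0}{\cdot}\neq 0$), and both jets are taken at $\cV=0$. Hence $\supp(\bsp^{k,\tint}_{i,\ell'})$ contains a level-$\ell'$ element $Q$ adjacent to $\cV=0$. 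This is a consequence of the jet characterisation recalled in the proof of \cref{thm:polar-splines}: the $r$-jet of $\bsp^\per_{i,\degU}\bsp_{j,\degV}$ is non-zero if and only if $j\le r+1$, and such B-splines are supported on elements touching $\cV=0$.

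Next I use the definition of active functions to locate the refinement. For the function to be active at level $\ell'$, we need $\supp(\bsp^{k,\tint}_{i,\ell'})\subseteq\tpDomain_{\ell'}$. We also need its support not to be contained in $\tpDomain_{\ell'+1}$, since otherwise it would have been deactivated at step $\ell'+1$. On the other side, $\ell$ being the level of the coarsest element touching $\cV=0$ means every element adjacent to the bottom boundary has level at least $\ell$. In other words, the bottom strip $[0,1]\times[0,\epsilon]$ of the mesh is contained in $\tpDomain_{\ell}\subseteq\tpDomain_{\ell'+1}$ for suitable $\epsilon$, and in particular the level-$\ell'$ element $Q$ is fully refined.

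The main obstacle is then to show that the whole support of $\bsp^{k,\tint}_{i,\ell'}$ lies in $\tpDomain_{\ell'+1}$, not merely the part near the pole. For this I would invoke Assumption~2 as in the proof of \cref{lem:active-polar-basis}. Since $\tpDomain_{\ell'+1}$ is a union of supports of level-$\ell'$ polar $0$-forms, and it contains elements touching $\cV=0$, it must contain the support of one of the first three polar functions $\polar^0_{j,\ell'}$, $j=1,2,3$. The other polar $0$-forms whose support touches the bottom boundary are tensor-product B-splines with $j\ge 2$ in $\cV$, and their supports reach $\cV=0$ only within a $\theta$-range. Covering the full bottom strip of level $\ell'$ forces inclusion of $[0,1]\times[0,\cV_{\degV_{\ell'}+3,\ell'}]$, either directly (a polar function with $j\le3$) or by union. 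This rectangle contains the support of every level-$\ell'$ function with non-zero jet. For $k=0$ this is $B_{\cdot,\degV}$ with $j\le2$; for $k=1$ it is the $\dcU$-component with $j\le 2$ or the $\dcV$-component with $j=1$ in $d\kntsV$; for $k=2$ it is $j=1$. All of these are supported in $[0,1]\times[0,\cV_{\degV_{\ell'}+3,\ell'}]$ by the knot-vector relation between $\kntsV$ and $d\kntsV$.

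Therefore $\supp(\bsp^{k,\tint}_{i,\ell'})\subseteq\tpDomain_{\ell'+1}$, so $(i,\ell')$ is not active, a contradiction. For $k=0$ one must separately exclude the index $(1,\ell)$, which is precisely why the paper removes it in \eqref{eq:reduced-active-sets}. The function $\bsp^{0,\tint}_{1,\ell'}$ with $\ell'\neq\ell$ is inactive by the uniqueness part of \cref{lem:active-intermediate-basis}.
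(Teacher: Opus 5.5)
Your argument is correct and rests on the same fact as the paper's one-line proof: $\ell$ is the level of the coarsest element touching the pole (\cref{rem:unique-level}), so a lower-level function with non-zero jet cannot survive the deactivation step. You also make explicit something the paper leaves implicit, namely why the whole cap $[0,1]\times[0,\cV_{\degV_{\ell'}+3,\ell'}]$, and not only the first ring of elements, lies in $\tpDomain_{\ell'+1}$; you do this via Assumption~2, as in the proof of \cref{lem:active-polar-basis}. Note that $\tpDomain_{\ell'+1}$ need not contain the support of one of the first three polar functions, so your ``by union'' alternative is genuinely needed; it works because the other polar $0$-forms touching $\cV=0$ are $\bsp^\per_{i,\degU}\bsp_{j,\degV}$ with $j\geq 3$, whose supports reach at least $\cV_{\degV_{\ell'}+3,\ell'}$ in the $\cV$-direction.
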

\begin{proof}
    The proof follows from \cref{lem:active-intermediate-basis} and \cref{rem:unique-level}, since $\ell$ is the level of the coarsest element touching the pole.
\end{proof}

Consider a sub-complex of $\hintComplex$, denoted $\hintComplexZ$, defined by imposing zero boundary conditions at the polar edge -- this only modifies the space of 0-forms in the complex by eliminating the basis function corresponding to the index $(1,\ell) \in \activeS{0,\bsp^\tint}{}$.
In fact, the complex $\hintComplexZ$ coincides with the hierarchical B-spline complex with zero boundary conditions imposed at the polar edge and periodicity across the vertical edges.
Then, as was done in \cite{Evans:2020,Shepherd:2024}, it can be shown that this complex is exact if the hierarchical mesh satisfies certain conditions.
We utilize the conditions from \cite{Shepherd:2024}, and state them here in the following simplified form in our two-dimensional setting.
\begin{definition}[Minimal intersection \cite{Shepherd:2024}]\label{def:minimal-intersection}
    For any level $\ell$, consider two inactive B-splines vanishing at the pole, such that:
    \begin{equation*}
        \begin{split}
            &\bsp^{0}_{i,\ell}, \bsp^{0}_{j,\ell} \in \{\bsp \in \bspB^{0,\per}_{\ell}~:~\jet{0}{\bsp} = 0\} = \bspB^{0,\per}_{\ell} \cap \bspB^{0,\tint}_{\ell}\;,\\
            &\suppwp{\bsp^{0}_{i,\ell}}, \suppwp{\bsp^{0}_{j,\ell}} \subset \tpDomain_{\ell+1}\;.
        \end{split}
    \end{equation*}
    Then, we say that $\bsp^{0}_{i,\ell}$ and $\bsp^{0}_{j,\ell}$ share a minimal intersection if there exists $\bsp^{2}_{k,\ell+1} \in \bspB^{2,\per}_{\ell+1} = \bspB^{2,\tint}_{\ell+1}$ satisfying the following conditions:
    \begin{equation*}
            \begin{drcases*}
                \suppwp{\bsp^{0}_{i,\ell}} \cap \suppwp{\bsp^{0}_{j,\ell}} =: S_{\cU,\ell} \times S_{\cV,\ell}\\
                \suppwp{\bsp^{2}_{k,\ell+1}} =: S_{\cU,\ell+1} \times S_{\cV,\ell+1}
            \end{drcases*}
            \qquad\text{ such that }\qquad
            \begin{dcases*}
                S_{\cU,\ell+1} \subseteq S_{\cU,\ell}\;,\\
                \text{or}\\
                S_{\cV,\ell+1} \subseteq S_{\cV,\ell}\;.
            \end{dcases*}
    \end{equation*}
\end{definition}

\begin{assumption}[Shortest chains \cite{Shepherd:2024}]\label{ass:shortest-chains}
	For any level $\ell$, consider two inactive B-splines $\bsp^{0}_{i_1,\ell}, \bsp^{0}_{i_r,\ell}$ that share a minimal intersection.
    Then we assume that there exists a shortest chain of inactive B-splines $\bsp^{0}_{i_j,\ell}$, $j=1,\ldots,r$, between them.
    That is:
    \begin{itemize}
        \item $\bsp^{0}_{i_j,\ell} = \bsp^\per_{s_j,\degU,\ell}\bsp_{t_j,\degV,\ell} \in \bspB^{0,\per}_{\ell} \cap \bspB^{0,\tint}_{\ell}$ for all $j=1,\ldots,r$;
        \item $|(s_1,t_1) - (s_{r},t_{r})| = r-1$, and
        $|(s_j,t_j) - (s_{j+1},t_{j+1})| = 1$ for all $j=1,\ldots,r-1$;
        
        \item $\suppwp{\bsp^{0}_{i_j,\ell}} \subset \tpDomain_{\ell+1}$ for all $j=1,\ldots,r$.
    \end{itemize}
\end{assumption}
\begin{remark}
    Note that, because of the periodicity of the tensor-product B-splines in $\bspB^{0,\per}_{\ell} \cap \bspB^{0,\tint}_{\ell}$, the indices $s_1$ and $t_1$ in Assumption 3 are understood in a periodic sense.
    That is, $\pndofU_\ell + k$ is identified with $k$ for all $k \in \NN$.
\end{remark}

Then, by repeating the local arguments from \cite{Shepherd:2024}, it can be shown that the complex $\hintComplexZ$ is exact; we state this result in the following proposition without detailing the proof.
\begin{proposition}
    Given Assumptions 1-3, the complex $\hintComplexZ$ is exact:
    \begin{equation*}
        H^0(\hintComplexZ) = H^1(\hintComplexZ) = H^2(\hintComplexZ) = 0\;.
    \end{equation*}
\end{proposition}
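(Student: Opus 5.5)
Since the complex $\hintComplexZ$ consists of hierarchical tensor-product B-spline forms with zero boundary conditions at $\cV=0$ and periodicity in $\cU$, the plan is to reduce the statement to the exactness theory of \cite{Shepherd:2024}, applied to the hierarchical B-spline complex of the periodic (annular) parametric domain with homogeneous boundary conditions on the polar edge. The cohomology of the single-level complex with these boundary conditions is computed first. One starts from \cref{thm:bspline-cohomology-periodic}, or equivalently \cref{thm:intermediate-complex}, and removes the constant function $\bsp^{0,\tint}_{1,\ell}$. Then $H^0=0$, because a constant vanishing at $\cV=0$ is zero. For $H^1$, the only nontrivial class of the periodic complex is the one generated by a $\dcU$-form with nonzero trace at the pole, and this class is removed in the intermediate construction. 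For $H^2$, exactness follows from integrating in $\cV$ from the pole. The single-level zero-boundary complexes $\bspS^{\bullet,\tint}_{\ell,0}$ are therefore exact for every level.

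Next, the Mayer--Vietoris / spectral-sequence argument of \cite{Shepherd:2024} is followed. That argument expresses the hierarchical complex through a filtration over levels: the hierarchical space is built by successively adding level-$(\ell+1)$ functions supported in $\tpDomain_{\ell+1}$ and removing level-$\ell$ functions supported there. Exactness then reduces to showing that, for each $\ell$, the complexes of level-$\ell$ and level-$(\ell+1)$ forms supported in $\tpDomain_{\ell+1}$ have isomorphic cohomology. For the zero-boundary version these are locally supported subcomplexes of exact complexes, with B-splines that vanish at the pole. The shortest-chain condition (\cref{ass:shortest-chains}), together with the minimal-intersection notion (\cref{def:minimal-intersection}), is exactly the hypothesis that \cite{Shepherd:2024} uses to guarantee that these restricted complexes have the cohomology of the region $\tpDomain_{\ell+1}$ at both levels, so that the induced maps are isomorphisms. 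Assumption 2 ensures that $\tpDomain_{\ell+1}$ is a union of $0$-form supports, as required there, and Assumption 1 gives nestedness.

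Two adaptations must be checked, and the first is the main obstacle. First, periodicity in $\cU$. The local arguments of \cite{Shepherd:2024} concern chains of B-spline supports and are purely local in index space. With periodic indexing, as in the Remark following \cref{ass:shortest-chains}, each chain lives in a region that, once unwrapped, is locally tensor-product. The difficulty arises when $\tpDomain_{\ell+1}$ wraps around the whole annulus, creating a nontrivial loop. The plan here is to show that a loop around $\cV$ = const cannot contribute cohomology in the zero-boundary setting whenever the refined region is connected to the pole. If it is not connected to the pole, the loop appears identically at both levels $\ell$ and $\ell+1$, so the comparison maps remain isomorphisms. Second, the pole boundary. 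The level-$\ell$ and level-$(\ell+1)$ functions touching $\cV=0$ are exactly those with nonzero jets, and the zero-boundary condition simply removes the $0$-form trace. The relative complexes are then handled by the same local arguments, viewed as complexes relative to the polar edge. Combining the exact single-level complexes with these level-by-level isomorphisms yields $H^k(\hintComplexZ)=0$ for $k=0,1,2$.
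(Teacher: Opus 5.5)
Your reduction is the one the paper uses. Its proof likewise defers to \cite{Shepherd:2024}: exactness is reduced to showing that the complex deactivated at level $\ell$ and the one activated at level $\ell+1$ on $\tpDomain_{\ell+1}$ have isomorphic cohomology, and the polar-edge boundary condition and periodic indexing are asserted to affect only bookkeeping. Your single-level base case is also correct. For $H^1$, however, it is cleaner to use the Euler characteristic $\pndofU(\ndofV-1)-2\pndofU(\ndofV-1)+\pndofU(\ndofV-1)=0$ together with $H^0=H^2=0$, rather than arguing that the periodic class is ``removed''. Note also that the removed function $\bsp^{0,\tint}_{1,\ell}=\bsp_{1,\degV,\ell}(\cV)$ is not a constant.

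The step you add beyond the paper, however, rests on a false claim: a wrap-around region touching the pole does not kill the loop. The restricted complexes impose zero trace on the polar edge \emph{and} vanishing near the part of $\partial\tpDomain_{\ell+1}$ inside $\tpDomain$, so the loop still yields cohomology.

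Concretely, let $\tpDomain_{\ell+1}=[0,1]\times[0,b]$ with $b<1$, and let $\beta_\ell$ be the first function in $\bspB[d\kntsV_\ell]$. Set $\omega=\beta_\ell(\cV)\,\dcV$; by partition of unity in $\cU$ this is a sum of level-$\ell$ $\dcV$ basis forms supported in $\tpDomain_{\ell+1}$. Then $d\omega=0$. But $\omega=dg$ would force $\partial_\cU g=0$, $g(\cU,0)=0$ and $\partial_\cV g=\beta_\ell$. Hence $g(\cV)=\int_0^{\cV}\beta_\ell$, which equals $\int_0^1\beta_\ell>0$ beyond the first row of elements, contradicting $\suppwp{g}\subseteq\tpDomain_{\ell+1}$. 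Similarly, the total integral gives a nonzero class in $H^2$. So the vanishing you intend to prove in the pole-connected case is false.

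The remedy is to treat this case exactly like your other one. Consider the functional $\omega\mapsto\int_0^1\omega_\cV(\cU_0,\cV)\,d\cV$:
\begin{itemize}
\item it is independent of $\cU_0$ on closed forms, since $\omega_\cU$ vanishes at $\cV=0$ and near $\cV=b$;
\item it vanishes on exact forms;
\item it is the same functional at levels $\ell$ and $\ell+1$.
\end{itemize}
So the inclusion sends this class to a nonzero class. What makes the two levels agree, with or without contact with the pole, is Assumption~\ref{ass:shortest-chains} read with periodic indices, not a vanishing statement. Without it, periodicity can genuinely break exactness, as the cohomology-breaking mesh in \cref{fig:harmonics-example} illustrates.
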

\begin{proof}
    The proof uses the same arguments as in \cite{Shepherd:2024}.
    In particular, one only needs to show that the cohomology of the B-spline complex that is deactivated at level $\ell$ is isomorphic to the cohomology of the B-spline complex that is activated at level $\ell+1$.
    The structure of the boundary conditions in the current setting does not affect this argument and (up to book-keeping) the proof is identical to the one in \cite{Shepherd:2024}.
\end{proof}

Then, utilizing the following short exact sequence:
\begin{equation*}
    \begin{tikzcd}
        0 \arrow{r} & \hintComplexZ \arrow{r}{} & \hintComplex \arrow{r}{} & \hintComplex / \hintComplexZ  \arrow{r} & 0\;,
    \end{tikzcd}
\end{equation*}
and the long sequence as in Theorem~\ref{thm:polar-splines}, we immediately get the following result.
\begin{corollary}\label{cor:exactness-intermediate-complex}
    Given Assumptions 1-3, the complex $\hintComplex$ has the following cohomological structure:
    \begin{equation*}
        H^0(\hintComplex) \isomorphic \RR\;,
        H^1(\hintComplex) = H^2(\hintComplex) =  0\;.
    \end{equation*}
\end{corollary}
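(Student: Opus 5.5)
We need to prove Corollary (exactness of the hierarchical intermediate complex) from the exactness of the zero-boundary complex and the short exact sequence. The plan is to analyse the quotient complex $\hintComplex/\hintComplexZ$ and then read off the cohomology of $\hintComplex$ from the long exact sequence.

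\textbf{Step 1: the quotient complex.} By definition, $\hintComplexZ$ differs from $\hintComplex$ only in the $0$-forms, where the single active function with index $(1,\ell)$ is removed. By Lemma~\ref{lem:active-intermediate-basis} this index is unique. Theorem~\ref{thm:bspline-linear-ind} guarantees that the remaining active functions stay linearly independent. Hence the quotient complex is
\begin{equation*}
    \RR \longrightarrow 0 \longrightarrow 0\;,
\end{equation*}
with its $0$-th space spanned by the class of $\bsp^{0,\tint}_{1,\ell}$. It follows that $H^0(\hintComplex/\hintComplexZ)\isomorphic\RR$ and that $H^1$ and $H^2$ of the quotient vanish.

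\textbf{Step 2: the long exact sequence.} Write the long exact sequence associated with $0\to\hintComplexZ\to\hintComplex\to\hintComplex/\hintComplexZ\to 0$, in the same form as in the proof of Theorem~\ref{thm:polar-splines}. Insert $H^k(\hintComplexZ)=0$ for $k=0,1,2$ (the preceding Proposition) together with the quotient cohomology from Step~1.
\begin{itemize}
    \item For $k=2$: $H^2(\hintComplex)$ is squeezed between $H^2(\hintComplexZ)=0$ and $H^2(\text{quotient})=0$, so it vanishes.
    \item For $k=1$: $H^1(\hintComplex)$ is squeezed between $H^1(\hintComplexZ)=0$ and $H^1(\text{quotient})=0$, so it vanishes.
    \item For $k=0$: the segment $0=H^0(\hintComplexZ)\to H^0(\hintComplex)\to\RR\to H^1(\hintComplexZ)=0$ is exact, so $H^0(\hintComplex)\isomorphic\RR$.
\end{itemize}

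\textbf{Cross-check for $H^0$.} The isomorphism for $H^0$ can also be seen directly. Constants lie in $\hintComplex^0$, because partition of unity over the active hierarchical functions holds near the pole through $\bsp^{0,\tint}_{1,\ell}$. Moreover, $df=0$ on the connected domain $\qDomain$ forces $f$ to be constant.

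\textbf{Main obstacle.} The delicate step is Step~1: confirming that removing $(1,\ell)$ affects only the $0$-forms and yields a genuine subcomplex. This requires that $d\bsp^{0,\tint}_{1,\ell}$ does not need to be compensated in the $1$-forms, and that no other active functions have a nonzero trace at $\cV=0$. I would settle this using Lemma~\ref{lem:active-intermediate-basis} and Corollary~\ref{cor:reduced-intermediate-basis}. Every other active $0$-form is a tensor-product B-spline that vanishes at the pole, and the $1$-form space is unchanged. Consequently $d$ maps $\hintComplexZ^0$ into $\hintComplex^1$, which is the same $1$-form space in both complexes.
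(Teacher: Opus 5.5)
Your proof is correct and follows the paper's argument: the short exact sequence $0\to\hintComplexZ\to\hintComplex\to\hintComplex/\hintComplexZ\to 0$ together with the exactness of $\hintComplexZ$, where you make explicit what the paper leaves implicit, namely that the quotient complex is $\RR\to 0\to 0$ and hence $H^k(\hintComplex)\isomorphic H^k(\hintComplex/\hintComplexZ)$ for every $k$. Two small remarks: the subcomplex property you flag as the main obstacle is automatic, since only the $0$-form space is shrunk; and in your cross-check, constants lie in the hierarchical space because it contains the level-$0$ space $\bspS^{0,\tint}_0$, not because of a partition of unity, which non-truncated hierarchical B-splines need not satisfy.
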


As a final result for the complex $\hintComplex$, we show that it is reduced at the pole.
This will simplify the proof of the exactness of $\hpolarComplex$ in the next section by allowing us to use the same arguments as in the polar spline case discussed in Section \ref{sec:preliminaries-polar}.
\begin{lemma}\label{lem:intermediate-reduced-complex}
    The complex $\hintComplex$ is reduced at the pole.
\end{lemma}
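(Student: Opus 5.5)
We prove the lemma constructively, mirroring the proof that $\complex{\bsp}^{,\tint}$ is reduced at the pole (\cref{thm:intermediate-complex}). All data are taken from the unique level $\ell$ of \cref{lem:active-intermediate-basis}, i.e.\ the level of the coarsest element touching the pole. We choose $g(\cV) := \bsp_{2,\degV,\ell}(\cV)$, which is $C^1$ with $g(0)=0$ and $g'(0)\neq 0$. The univariate spaces $\hSpace{V}_\cU^0$ and $\hSpace{V}_\cU^1$ are the spans of the $\cU$-factors of the active functions in the reduced sets $\ractiveS{k,\bsp^\tint}{}$ of \eqref{eq:reduced-active-sets}. Concretely, $\hSpace{V}_\cU^0$ is spanned by the $\bsp^\per_{i,\degU,\ell'}$ with $\bsp^\per_{i,\degU,\ell'}\bsp_{2,\degV,\ell'}$ active. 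Similarly, $\hSpace{V}_\cU^1$ is spanned by the $\bsp_{i}\in\bspB[d\pkntsU_{\ell'}]$ for which $\bsp_i\bsp_{1,\degV,\ell'}\dcV$ or $\bsp_i\bsp_{2,\degV,\ell'}\dcU$ is active. These are univariate hierarchical spline spaces in $\cU$ built on the trace of the domain hierarchy along $\cV=0$, with $\ell'\ge\ell$ by \cref{cor:reduced-intermediate-basis}. The checked spaces $\check{\vSpace{V}}^0,\check{\vSpace{V}}^{11},\check{\vSpace{V}}^{12},\check{\vSpace{V}}^2$ are the spans of the active hierarchical functions with vanishing $1$-jet or $0$-jet, exactly as in the proof of \cref{thm:intermediate-complex}. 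The jet conditions in \cref{def:reduced-complex} then hold by construction.

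Next we establish the decompositions. For $0$-forms we proceed as in the single-level case. We write $f\in\hSpace{\bsp}^{0,\tint}$ in the hierarchical basis and subtract $f^\tint_{1,\ell}$ times the constant function $1$. Since $\bsp^{0,\tint}_{1,\ell}$ is the only active function with nonzero value at the pole, and by partition of unity at level $\ell$, the difference has zero trace at $\cV=0$.

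The delicate point is the first-derivative jet. Active level-$\ell'$ functions $\bsp^\per_{i,\degU,\ell'}\bsp_{2,\degV,\ell'}$ with $\ell'>\ell$ have a $\cV$-profile $\bsp_{2,\degV,\ell'}$ rather than $g$. We therefore split each such function as
\[
\bsp^\per_{i,\degU,\ell'}\bsp_{2,\degV,\ell'} = c\,\bsp^\per_{i,\degU,\ell'}\,g + \bigl(\bsp^\per_{i,\degU,\ell'}\bsp_{2,\degV,\ell'} - c\,\bsp^\per_{i,\degU,\ell'}\,g\bigr),
\]
with $c=\bsp_{2,\degV,\ell'}'(0)/g'(0)$, so that the bracket has zero $1$-jet. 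The bracket must lie in $\hSpace{\bsp}^{0,\tint}$, and so must $g\,\hSpace{V}_\cU^0$ itself. We therefore redefine $\check{\vSpace{V}}^0$ as the subspace of $\hSpace{\bsp}^{0,\tint}$ with vanishing $1$-jet, and show that $g(\cV)h(\cU)\in\hSpace{\bsp}^{0,\tint}$ for every $h\in\hSpace{V}_\cU^0$.

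The $1$- and $2$-forms are simpler: their $\dcV$ and $\dcU\wedge\dcV$ traces only involve $\bsp_{1,\degV,\ell'}$, which equals $1$ at $\cV=0$. Thus $\hSpace{V}_\cU^0 1_\cV$ and $\hSpace{V}_\cU^1 1_\cV$ are realized modulo $0$-jet-zero functions, which is all the definition requires if the decomposition is read modulo the checked spaces. In the same way we read the $0$-form decomposition as requiring only that $g\,\hSpace{V}_\cU^0$ be contained in the space.

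The main obstacle is showing $g(\cV)h(\cU)\in\hSpace{\bsp}^{0,\tint}$, and likewise $g\,\hSpace{V}_\cU^1\dcU\subset\hSpace{\bsp}^{1,\tint}$. We will argue via nestedness: a level-$\ell'$ product $\bsp^\per_{i,\ell'}\bsp_{2,\ell}$ refines, by knot insertion in $\cV$, into level-$\ell'$ B-splines with $\cV$-indices near the pole. Assumption~2 gives that the refined region $\tpDomain_{\ell'}$ near the pole is a union of full $0$-form supports. This should ensure that all those refined B-splines are active, or are representable by active ones, via the standard characterization of hierarchical spaces as functions that are piecewise in $\bspS_{\ell'}$ on $\tpDomain_{\ell'}\setminus\tpDomain_{\ell'+1}$ (valid under local linear independence, \cite{Giannelli:2014}). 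We will first try exactly this characterization: $g\,h$ restricted to $\tpDomain_{\ell'}\setminus\tpDomain_{\ell'+1}$ is a level-$\ell'$ spline, since $g$ is a level-$\ell$ spline and $h$ is locally level-$\ell'$ there.

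Finally, we verify $\frac{d}{d\cU}\hSpace{V}_\cU^0\subset\hSpace{V}_\cU^1$. This follows from the univariate exactness of hierarchical spline sequences on the periodic line induced by the pole-adjacent rows: the supports of $\bsp^\per_{i,\ell'}\bsp_{2,\ell'}$ and of the corresponding $1$-forms refine in the same way along $\cU$.
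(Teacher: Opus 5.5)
\textbf{There is a genuine gap, and it sits at the step you single out as the main obstacle.}

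Your set-up is the paper's: the same $g=\bsp_{2,\degV,\ell}$, the same univariate spaces and the same checked spaces. You also correctly isolate the crux, namely $g\,\hSpace{V}_\cU^0\subseteq\hSpace{\bsp}^{0,\tint}$, which the paper's proof settles with ``by nestedness of the level-wise spaces''. That inclusion, however, is false in general.

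Your argument breaks where it claims $h$ ``is locally level-$\ell'$'' on $\tpDomain_{\ell'}\setminus\tpDomain_{\ell'+1}$. The level of $h$ is dictated by the hierarchy along the pole row, whereas $g$ reaches up to $\cV_{\degV+3,\ell}$. \cref{ass:refinement-domains} only guarantees that the pole-touching part of $\tpDomain_{\ell'}$ reaches height $\cV_{\degV+4,\ell'-1}$, which can be smaller once $\ell'\ge\ell+2$.

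Concretely, take uniform, maximally smooth dyadic knots with $\cV$-mesh sizes $\delta_0=2\delta_1=4\delta_2$, and set $\tpDomain_1=\suppwp{\bsp^\per_{n,\degU,0}\bsp_{3,\degV,0}}$ and $\tpDomain_2=\suppwp{\bsp^\per_{n',\degU,1}\bsp_{3,\degV,1}}\subset\tpDomain_1$. Then Assumptions 1--3 hold, $\ell=0$, and $\tpDomain_2$ has height $3\delta_1=\tfrac32\delta_0$, while $\suppwp{g}=[0,2\delta_0]$.
\begin{itemize}
\item If $\suppwp{\bsp^\per_{m,\degU,2}}\subseteq\suppwp{\bsp^\per_{n',\degU,1}}$, then $\bsp^\per_{m,\degU,2}\bsp_{2,\degV,2}$ is active, so $\bsp^\per_{m,\degU,2}\in\hSpace{V}_\cU^0$.
\item But $g\,\bsp^\per_{m,\degU,2}$ has level-$2$ breakpoints in $\cU$ inside the level-$1$ elements of $\tpDomain_1\setminus\tpDomain_2$ with $\cV\in[\tfrac32\delta_0,2\delta_0]$. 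On those elements every function of $\hSpace{\bsp}^{0,\tint}$ is a polynomial, so $g\,\bsp^\per_{m,\degU,2}\notin\hSpace{\bsp}^{0,\tint}$.
\item No other $g$ helps. Since $\bsp^\per_{i,\degU,0}\in\hSpace{V}_\cU^0$ for $i\neq n$ and the columns outside $\tpDomain_1$ are unrefined, $g$ must lie in $\bspS[\kntsV_0]$. Such a $g$ with $g(0)=0\neq g'(0)$ cannot vanish on $(\tfrac32\delta_0,2\delta_0)$: it would then vanish on $[\delta_0,2\delta_0]$, forcing $g=c(\cV-\delta_0)^{\degV}$ on $[0,\delta_0]$.
\end{itemize}
So the literal decomposition of \cref{def:reduced-complex} fails for $0$-forms on this admissible hierarchy. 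The reason is the same one you already noticed for the factors $1_\cV$ in the $1$- and $2$-form parts. Neither your nestedness argument nor the paper's can close this step.

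\textbf{What to prove instead.} What can be proved, and what \cref{lem:reduced-complex} and the dimension count for $H^1(\hpolarComplex)$ actually use, is the jet-level content:
\begin{itemize}
\item[(i)] $f_\cU|_{\cV=0}=0$ for every $f\in\hSpace{\bsp}^{1,\tint}$;
\item[(ii)] the traces $f_\cV|_{\cV=0}$, $f\in\hSpace{\bsp}^{1,\tint}$, span the same space as the derivatives $\partial_\cV F|_{\cV=0}$ over $F\in\hSpace{\bsp}^{0,\tint}$ with $F|_{\cV=0}=0$;
\item[(iii)] the traces of $2$-form coefficients span the same space as $\partial_\cV f_\cU|_{\cV=0}$, $f\in\hSpace{\bsp}^{1,\tint}$.
\end{itemize}
Local exactness then follows by taking for $F$ any $0$-form that realises the needed $1$-jet; no product $g(\cV)h(\cU)$ is required.

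Item (i) holds level by level by construction of $\mat{E}^{1,\tint}_{\ell'}$. Items (ii) and (iii) follow from \cref{ass:refinement-domains}. Every pole-touching piece of $\tpDomain_{\ell'}$ or $\tpDomain_{\ell'+1}$ has height at least $\cV_{\degV+3,\ell'}$. Hence a level-$\ell'$ $k$-form whose $\cV$-factor is $\bsp_{2,\degV,\ell'}$ is active if and only if the $(k+1)$-form with the same $\cU$-factor and with $\cV$-factor the first B-spline of $d\kntsV_{\ell'}$ is active.

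A smaller correction: your $\hSpace{V}_\cU^1$ should be generated by the $\dcU$ functions only. The $\cU$-factors of the $\dcV$ functions lie in $\bspB[\pkntsU_{\ell'}]$, not $\bspB[d\pkntsU_{\ell'}]$, and pair with $\hSpace{V}_\cU^0$.
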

\begin{proof}	
    Let us define the univariate B-spline spaces, $\hSpace{V}_\cU^0$ and $\hSpace{V}_\cU^1$, as follows:
    \begin{equation}\label{eq:reduced-spaces-at-pole}
        \small
        \begin{split}
            \hSpace{V}_\cU^0 &:= \spaceSpan{
                \left\{
                    \bsp_{i_1} : \bsp^{0,\tint}_{i,\ell'} = \bsp_{i_1,\degU,\ell}\bsp_{i_2,\degV,\ell}\text{ for some }(i,\ell') \in \ractiveS{0,\bsp^\tint}{}\;
                \right\}
            }\;,\\
            \hSpace{V}_\cU^1 &:= \spaceSpan{
                \left\{
                    \bsp_{i_1} : \bsp^{1,\tint}_{i,\ell'} = \bsp_{i_1,\degU,\ell}\bsp_{i_2,\degV,\ell}\;\dcU\text{ for some }(i,\ell') \in \ractiveS{1,\bsp^\tint}{}
                \right\}
            }\;,
        \end{split}
    \end{equation}
    where we recall that $\ractiveS{0,\bsp^\tint}{}$ and $\ractiveS{1,\bsp^\tint}{}$ are defined in \cref{eq:reduced-active-sets}.
    Moreover, and analogously to \cref{thm:intermediate-complex}, we define the spaces:
    \begin{equation*}
        \begin{split}
            \check{\vSpace{V}}^0 &:= \spaceSpan{B~:~B \in \hBasis{\bsp}^{0,\tint}
            ~~\text{and}~~
            \jet{1}{B} = 0}\;,\\
            \check{\vSpace{V}}^{11} &:= \spaceSpan{B~:~B\;\dcU \in \hBasis{\bsp}^{1,\tint}
            ~~\text{and}~~
            \jet{1}{B} = 0}\;,\\
            \check{\vSpace{V}}^{12} &:= \spaceSpan{B~:~B\;\dcV \in \hBasis{\bsp}^{1,\tint}
            ~~\text{and}~~
            \jet{0}{B} = 0}\;,\\
            \check{\vSpace{V}}^2 &:= \spaceSpan{B~:~B\;\dcU\wedge\dcV \in \hBasis{\bsp}^{2,\tint}
            ~~\text{and}~~
            \jet{0}{B} = 0}\;.
        \end{split}
    \end{equation*}
    Note that these spaces are different from those in \cref{thm:intermediate-complex}; we reuse the notation simply to establish a connection with \cref{def:reduced-complex}.
    Then, with $g(\cV) := \bsp_{2,\degV,\ell}(\cV)$ and level $\ell$ as in \cref{lem:active-intermediate-basis}, we have that the spaces $\hSpace{\bsp}^{0,\tint}$, $\hSpace{\bsp}^{1,\tint}$, and $\hSpace{\bsp}^{2,\tint}$ can be expressed in the following form:
    \begin{equation*}
        \begin{split}
            \hSpace{\bsp}^{0,\tint} &= \RR \oplus g(\cV)\hSpace{V}_\cU^0 \oplus \check{\vSpace{V}}^0\;,\\
            \hSpace{\bsp}^{1,\tint} &= \left(g(\cV)\hSpace{V}_\cU^1 \oplus \check{\vSpace{V}}^{11}\right)\dcU
                            + \left(\hSpace{V}_\cU^01_\cV \oplus \check{\vSpace{V}}^{12}\right)\dcV\;,\\
            \hSpace{\bsp}^{2,\tint} &= \left(\hSpace{V}_\cU^1 1_\cV \oplus \check{\vSpace{V}}^2\right)\dcU \wedge \dcV\;.
        \end{split}
    \end{equation*}
    
    For example, consider $\hSpace{\bsp}^{0,\tint}$ and let $B \in \hBasis{\bsp}^{0,\tint}$ such that $\jet{0}{B} = 0$ and $\jet{1}{B} \neq 0$.
    With \cref{cor:reduced-intermediate-basis} in mind, $B$ must be a B-spline from level $\ell' \geq \ell$.
    By nestedness of the level-wise spaces (particularly in the coordinate $\cV$), $B$ can be expressed as $B_1 + B_2$ where $B_1 \in g(\cV)\hSpace{V}_\cU^0$ with $\jet{1}{B_1} = \jet{1}{B}$, and $B_2 \in \check{\vSpace{V}}^0$ with $\jet{1}{B_2} = 0$.

    The reasoning is similar for the other spaces and, finally, it is easy to check that \cref{def:reduced-complex} applies.
\end{proof}

%% file: sections/numerics.tex
\section{Numerical results}
\tikzexternaldisable
\label{sec:numerics}

In the numerical tests that follow, there are some common configurations we point out here:

\begin{itemize}
    \item The domain used in all examples will be as defined in \eqref{eq:polar_map}. This
        implies that the polar domain \(\Omega\) will be an approximation of the disk.
        Although it is possible to exactly represent circular domains using generalized
        Tchebycheffian splines, it falls outside the scope of the present work. 
    \item All knot vectors are uniform, and the associated B-splines are maximally smooth.
    \item The polynomial degrees for the whole complex are indicated by the degrees
        \(p^{\theta}, p^{\rho}\) of the 0-form space --- in the polar and radial directions,
        respectively.
    \item Between each pair of consecutive levels, the knot vectors of each dimension are
        uniformly subdivided by a factor of 2.
    \item Whenever a visualization of a differential form is shown, it will be in terms of
        musical isomorphisms that map differential forms to proxy fields. For example, a
        1-form \(u=u_{x}dx+u_{y}dy\) is visualized as the vector field
        \(u^{\sharp}\coloneqq \left(g^{11}u_{x}+g^{12}u_{y}\right)\partial_{x} +
        \left(g^{21}u_{x}+g^{22}u_{y}\right)\partial_{y}\), where \(dx,dy\) denote the
        differentials in the polar domain, \(g\) the metric tensor associated with the polar
        map described in \Cref{subsec:polar-map}, and \(\partial_x, \partial_y\) the
        canonical basis vectors in \(\RR^2\). Recall that this map is degenerate at the
        pole, so we exclude this point from the visualizations.
    \item The quadrature rule used is Gauss-Legendre with \((p^{\theta}+1) \times
        (p^{\rho}+1)\) points. 
    \item All examples are derived via Hodge Laplacians, as described in \cite[Section
        3.2]{Arnold:2010}. The \(L^2\) inner product of differential
        forms \(\langle u, v\rangle\), as used in the weak formulations, is defined by
        \[
            \langle u, v \rangle_{\Omega} \coloneqq \int_{\Omega} u \wedge \star v.
        \]
        We also define the \(L^2\) inner product over the boundary \(\partial \Omega\) of
        \(\Omega\):
        \[
            \langle u, v \rangle_{\partial \Omega} \coloneqq \int_{\partial\Omega}
            \trace_{\Omega} u \wedge \trace_{\Omega} \star v.
        \]
\end{itemize}

All the tests were implemented using the \emph{Julia} package \texttt{Mantis.jl}
\cite{Cabanas:2026}.

\subsection{Harmonics}
\label{subsec:spurious-harmonics}

The first example demonstrates a non-structure-preserving discretization to motivate the
importance of preserving cohomology. 
The chosen local refinement pattern deliberately breaks the cohomology of the continuous
complex. This illustrates how arbitrary refinement strategies can lead to spurious results,
and how the structure-preserving refinements differ between the polar and tensor-product
settings.

The problem of interest is the mixed weak formulation of the 1-form Hodge Laplacian: Find
\((\sigma, u) \in \overline{\hSpace{\polar}}^0 \times \overline{\hSpace{\polar}}^1\) such
that 
\begin{equation}
    \label{eq:1-form-wf}
    \begin{cases}
        \langle \tau, \sigma\rangle_{\Omega} - \langle d\tau, u\rangle_{\Omega} = 0, &
    \forall \tau \in \overline{\hSpace{\polar}}^0, \\
    \langle v, d\sigma \rangle_{\Omega} + \langle dv,  du \rangle_{\Omega} =  \langle
        v, f \rangle_{\Omega}, & \forall v \in \overline{\hSpace{\polar}}^1,
    \end{cases}
\end{equation}
with weak imposition of a null trace for \(u\) as this is enough to exhibit spurious
harmonics.

For this case, we set \(p^\theta=p^\rho=3\), and the forcing term to be \(f=dx + dy\).
To demonstrate spurious harmonics, we first solve \eqref{eq:1-form-wf} on a locally
refined polar mesh which is not cohomology-preserving.
Next, the problem is solved again on a modified mesh that avoids the spurious results.
A discussion of both these cases is presented in \Cref{fig:harmonics-example}.

In addition to the visual indication in \Cref{fig:harmonics-example-problematic-solution}
that cohomology is not preserved, we can also assert this numerically.
The system in \eqref{eq:1-form-wf} can be computationally assembled into the following
system of block-matrices, each representing an integral in the weak formulation:
\begin{align*}
    \begin{bmatrix*}
        A & -B^T \\
        B & C
    \end{bmatrix*}
    \begin{bmatrix*}
        \sigma \\
        u
    \end{bmatrix*}
    =
    \begin{bmatrix}
        0 \\
        F
    \end{bmatrix}.
\end{align*}
For the 1-form Hodge Laplacian, the relevant cohomology space is
\(H^1(\complex{\overline{H\polar}})\).
From \Cref{thm:exactness}, we expect that a structure-preserving discretization satisfies
$H^1(\complex{\overline{H\polar}}) = 0$. However, due to our choice of refinement in \Cref{fig:harmonics-example-problematic-tp}, we observe
\[
    \dim H^1(\complex{\overline{H\polar}}) = \dim \overline{\hSpace{\polar}}^1 - \rank B -
    \rank C = 1,
\]
where the expression above results from the Hodge decomposition of
\(\overline{\hSpace{\polar}}^1\).

\begin{figure}[htbp]
    \centering
    \hfill
    \def\sideLength{0.3\textwidth}
    \begin{subfigure}{\sideLength}
        \centering
        \includegraphics[width=\textwidth]{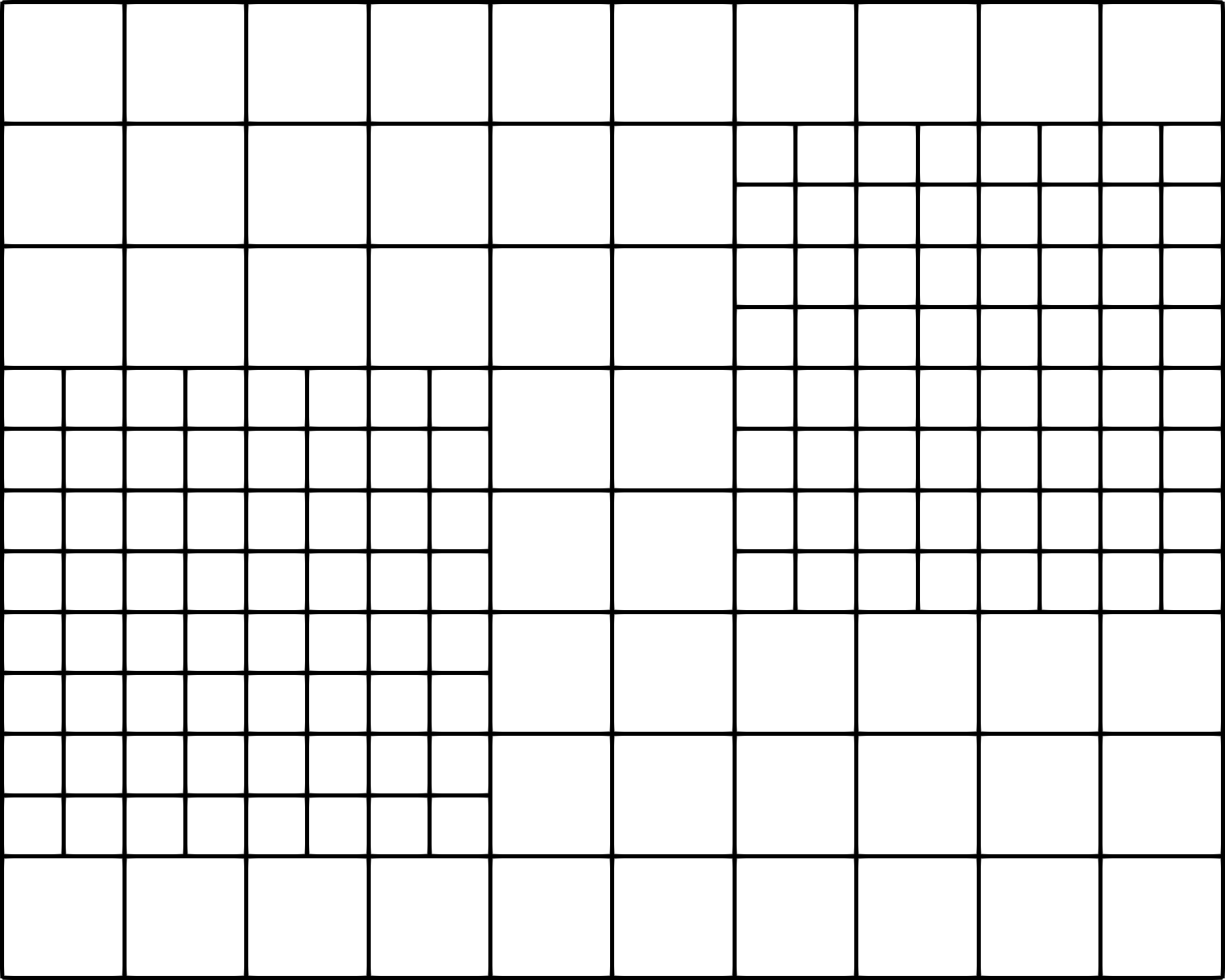}
        \caption{Tensor-product mesh.}
        \label{fig:harmonics-example-problematic-tp}
    \end{subfigure}
    \hfill
    \begin{subfigure}{\sideLength}
        \centering
        \includegraphics[width=\textwidth]{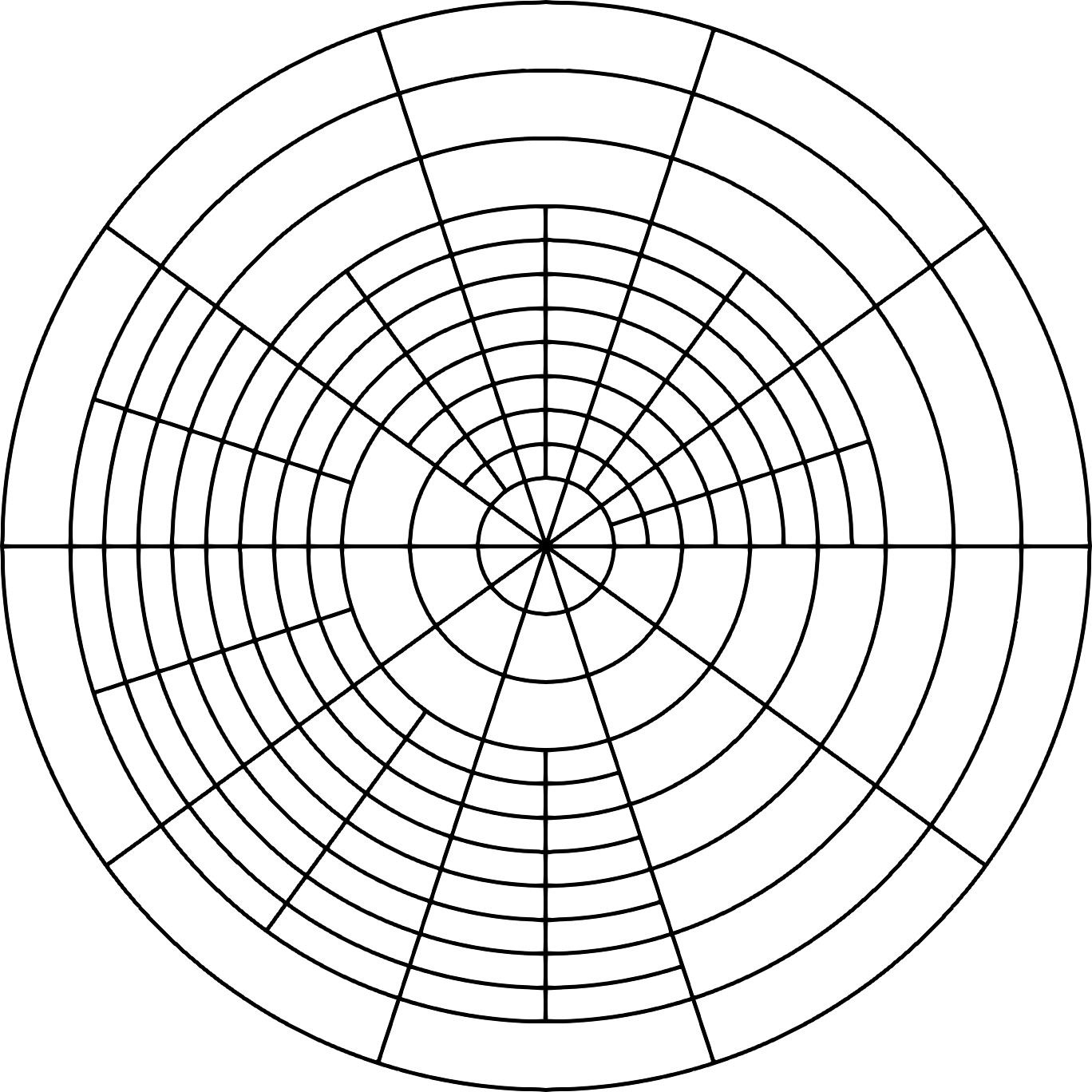}
        \caption{Polar mesh.}
    \end{subfigure}
    \hfill
    \begin{subfigure}{\sideLength}
        \centering
        \includegraphics[width=\textwidth]{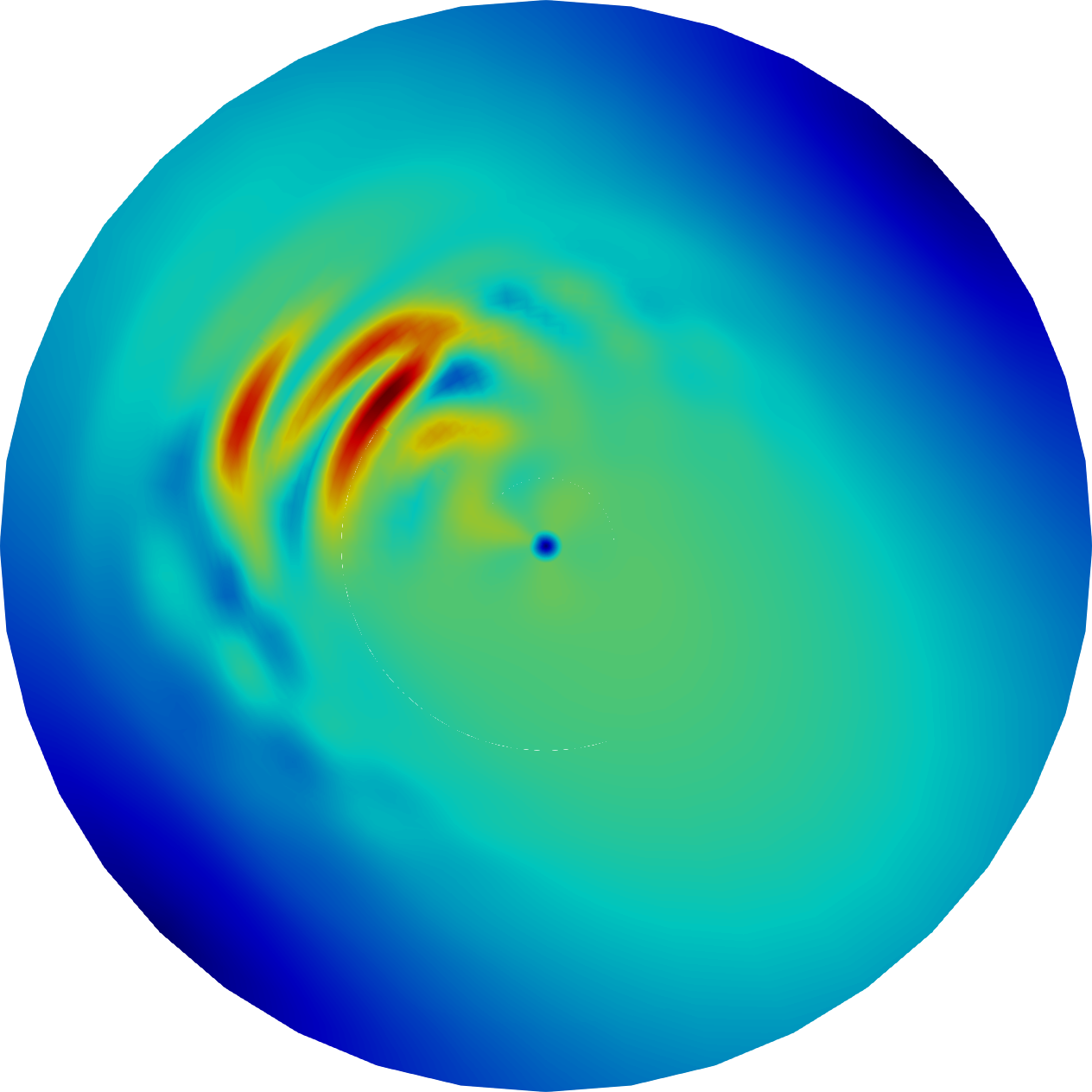}
        \caption{Computed solution.}
        \label{fig:harmonics-example-problematic-solution}
    \end{subfigure}
    \hfill \strut\\
    \hfill
    \begin{subfigure}{\sideLength}
        \centering
        \includegraphics[width=\textwidth]{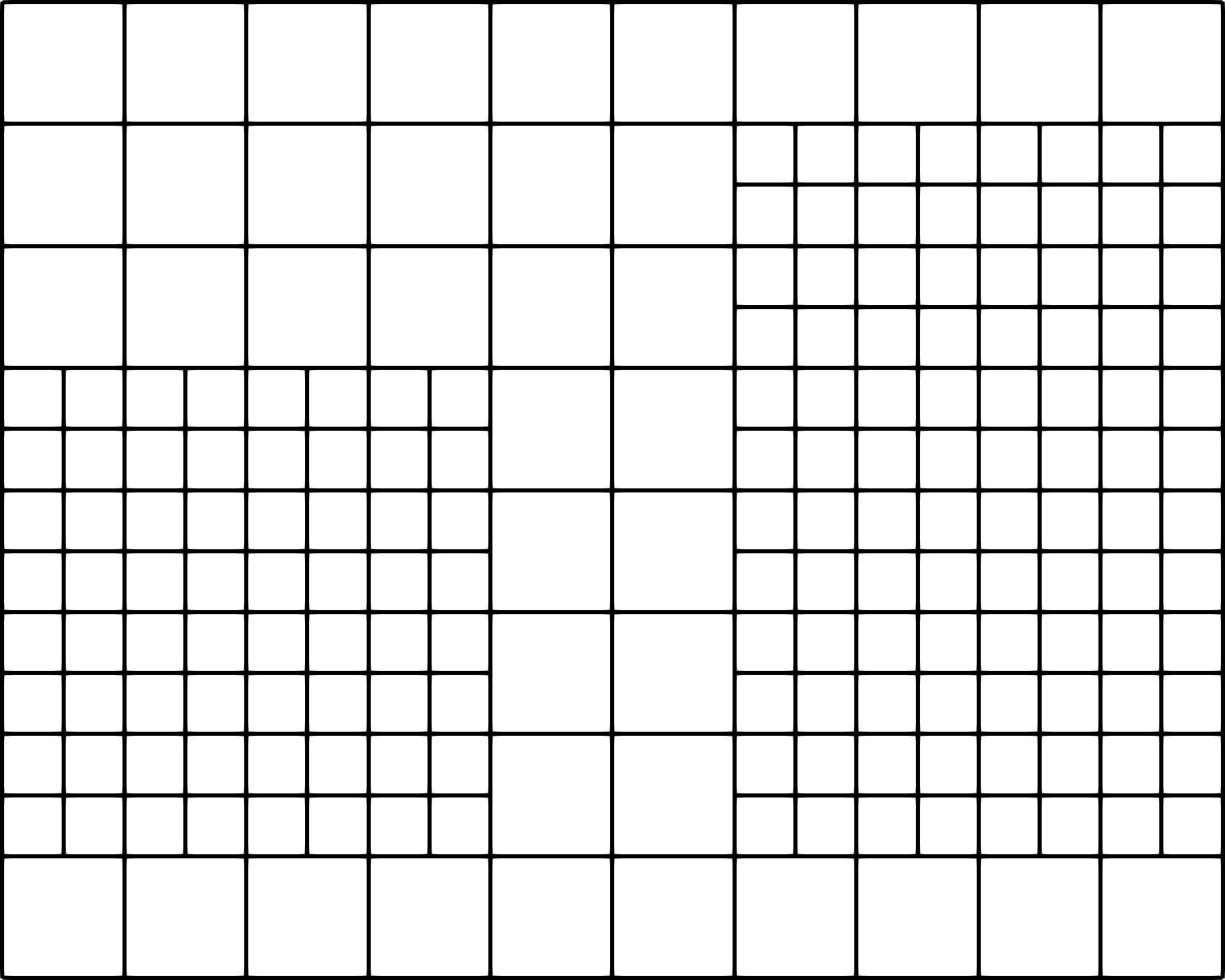}
        \caption{Tensor-product mesh.}
    \end{subfigure}
    \hfill
    \begin{subfigure}{\sideLength}
        \centering
        \includegraphics[width=\textwidth]{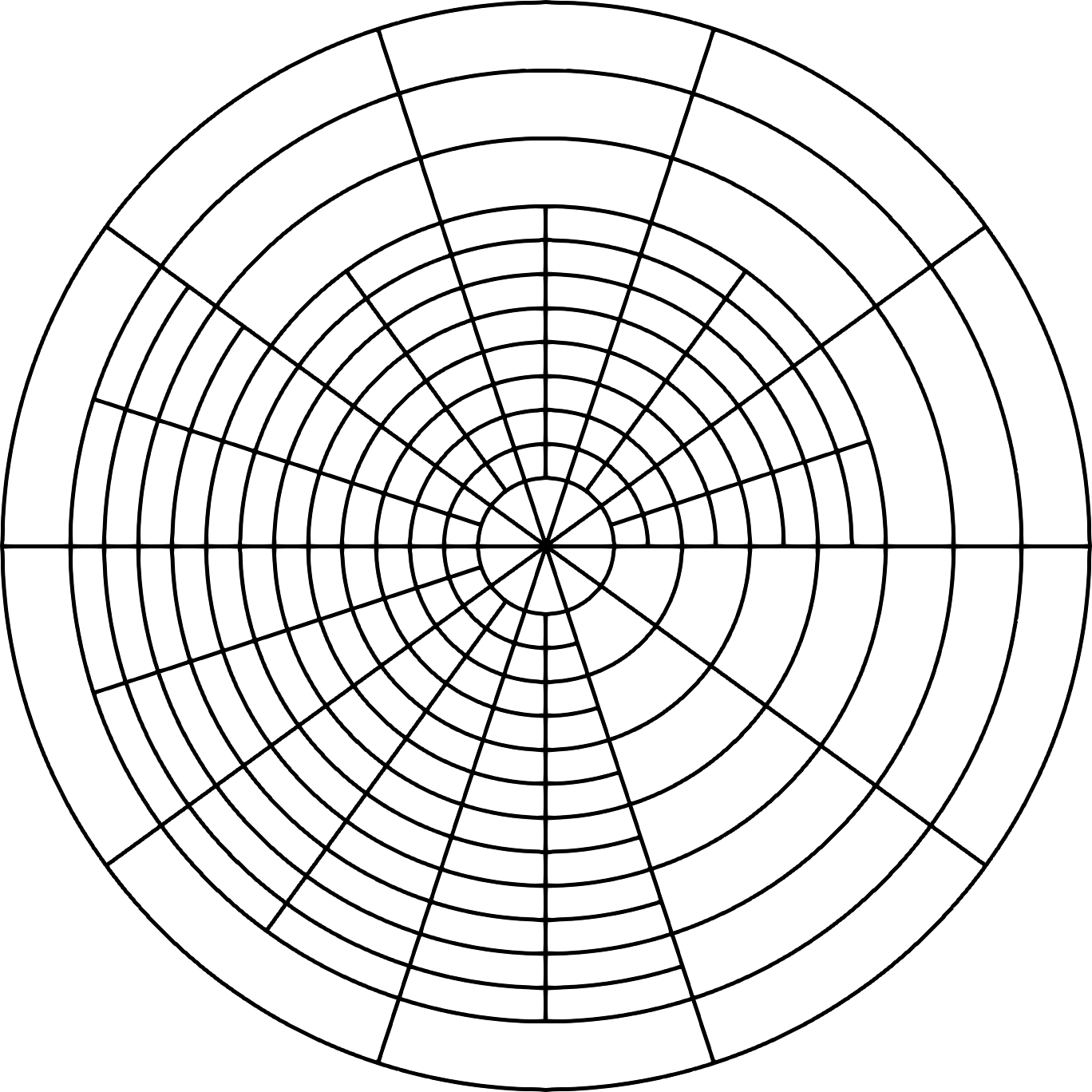}
        \caption{Polar mesh.}
    \end{subfigure}
    \hfill
    \begin{subfigure}{\sideLength}
        \centering
        \includegraphics[width=\textwidth]{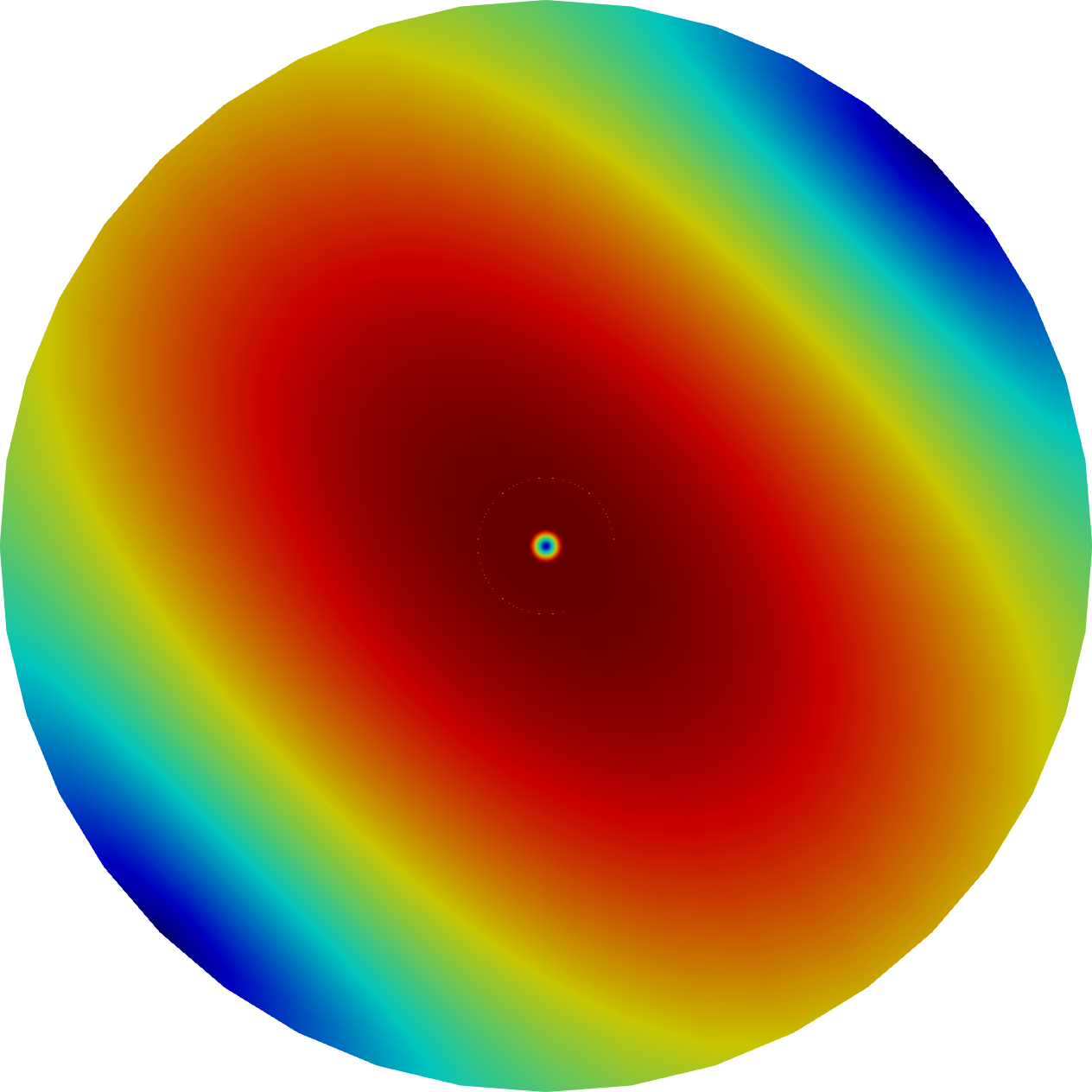}
        \caption{Computed solution.}
    \end{subfigure}
    \hfill \strut
    \caption{
        Comparison of two solutions of \eqref{eq:1-form-wf} on a cohomology-breaking and a
        cohomology-preserving mesh --- on the top and bottom rows, respectively.
        On the left, the tensor-product meshes corresponding to the polar meshes in the
        center. On the right, the magnitude of the computed \(u^{\sharp}\).
        Notice that the mesh in \Cref{fig:harmonics-example-problematic-tp} would be
        structure-preserving, in a tensor-product setting, if not for the introduction of
        periodicity.
        The loss of accuracy associated with the broken cohomology is clearly visible in the
        top-left quadrant of \Cref{fig:harmonics-example-problematic-solution}.
    }
    \label{fig:harmonics-example}
\end{figure}

\subsection{Maxwell eigenvalue}
\label{subsec:maxwell}

A class of PDEs where structure-preservation is of special importance are Maxwell's
equations.
These are often used as a prototypical example of how cohomology-breaking discretizations
can ruin numerical accuracy.
For this reason, in the current subsection we will solve a Maxwell eigenvalue-type problem.
The weak formulation \eqref{eq:maxwell-wf} is written with differential forms, but it can be
expressed using vector proxies as a curl-curl eigenvalue problem.

The solutions to this problem are well understood, and can be grouped in two different
types: transverse electric (TE) and transverse magnetic (TM) modes; the distinction between
the two modes can be summarised in the boundary conditions imposed and the direction of
propagation. 
Since the problem is formulated in two spatial dimensions, it corresponds to a dimensional
reduction of the three-dimensional problem; particularly, under the assumption of invariance
in out-of-plane direction.
As such, the solutions can be decoupled into either TE or TM modes, and no TEM modes are
computed. The latter require a three-dimensional formulation on a cylindrical domain.

The weak formulation of interest is given by the following equation:
Find \((u, \lambda) \in \vSpace{V} \times \RR\) such that 
\begin{equation}
    \label{eq:maxwell-wf}
    \langle dv, du \rangle_{\Omega} = \lambda^2 \langle v, u \rangle_{\Omega},\quad \forall
    v \in \vSpace{V},
\end{equation}
where \(\vSpace{V} \coloneqq  \overline{\hSpace{\polar}}^{1}_0\) for TE modes, and  \(\vSpace{V}
\coloneqq  \overline{\hSpace{\polar}}^{1}\) for TM modes.
Note that in the former we impose \(\trace_{\partial \Omega} u = \trace_{\partial \Omega} v=
0\) essentially, and in the latter \(\trace_{\partial \Omega} \star du = 0\) weakly. It is
well known that the eigenvalue solutions of \eqref{eq:maxwell-wf} correspond to the positive
zeros of Bessel functions of the first kind, for TM modes, and their derivatives, for TE
modes. 
We will denote the former by \(z_{n,i}\) and the latter by \(z'_{n,i}\) --- in relation to
the Bessel function \(J_n\).

The problem is discretized using \(p^\theta = p^\rho = 3\), and the initial mesh has
\(80\times20\) elements in the polar and radial directions. Subsequently, we construct a
truncated hierarchical polar spline space by performing local refinement to create a spiral
pattern; the final mesh can be seen in \Cref{fig:maxwell-example}. 
This particular choice of refinement is arbitrary, and is intended only to verify the
correct behavior of the approximation.
A summary of the computed results can be seen in \Cref{tab:maxwell-example}. 

The mesh shows that it is possible to refine only some of the elements adjacent to the pole.
Despite the local refinement, no spurious eigenmodes appear, and both TE and TM modes
are correctly approximated. This indicates that cohomology is preserved. Moreover, as
discussed in \Cref{sec:numerics}, the geometry itself is an approximation of the unit disk.
Therefore, besides the discretization error, there is a geometric approximation error that
should also be considered.
Once the geometric error dominates, further mesh refinement is insufficient to achieve
eigenvalue convergence.

\begin{figure}[htbp]
    \centering
    \begin{subfigure}{0.45\textwidth}
        \centering
        \includegraphics[width=\textwidth]{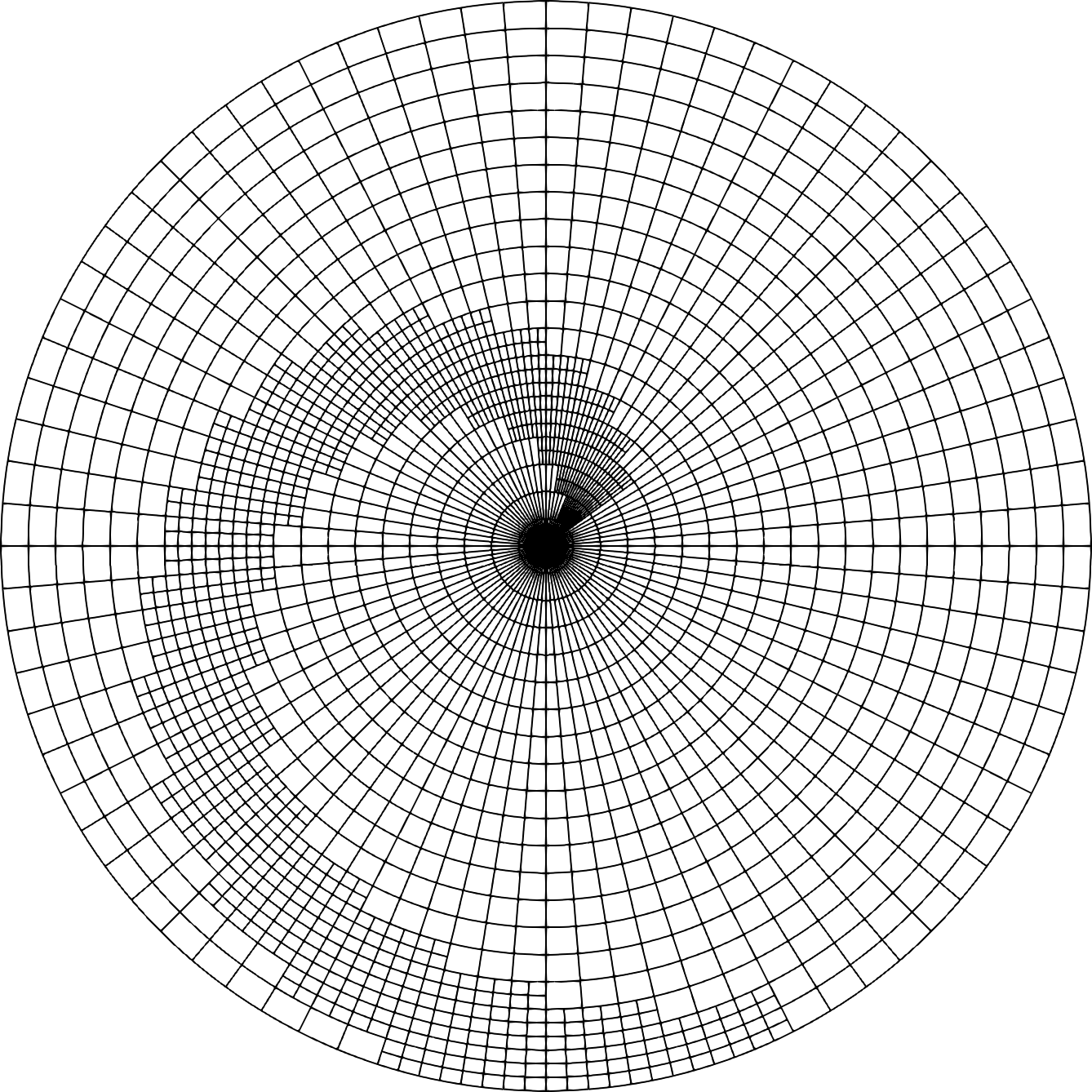}
        \caption{Full mesh}
        \label{fig:maxwell-full}
    \end{subfigure}
    \hfill
    \begin{subfigure}{0.45\textwidth}
        \centering
        \includegraphics[width=\textwidth]{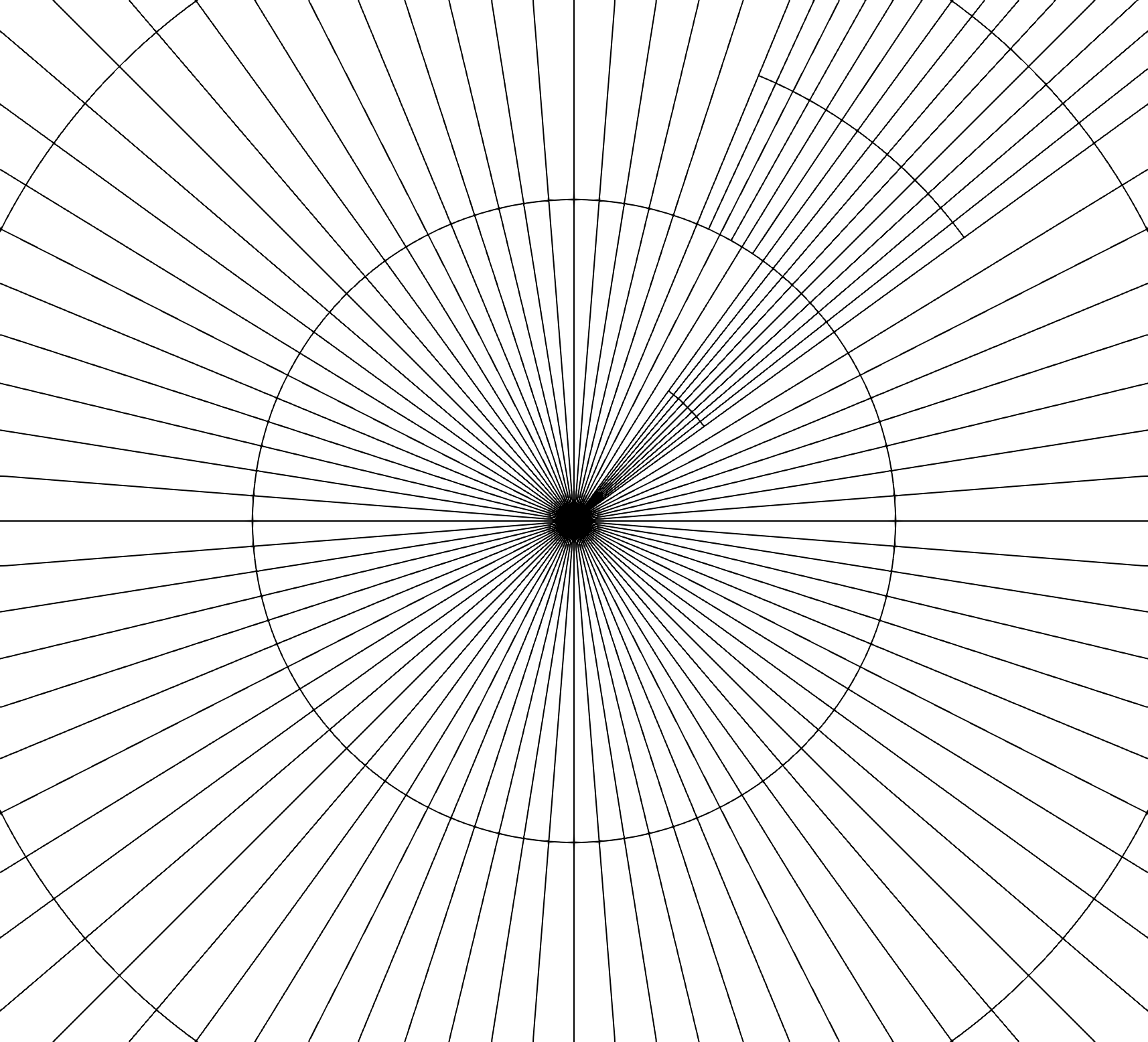}
        \caption{Zoomed behavior near the pole.}
        \label{fig:maxwell-zoom}
    \end{subfigure}
    \caption{
        The locally refined mesh used to compute the solution of \eqref{eq:maxwell-wf}.
        Note that only some of the elements adjacent to the pole are refined.
    }
    \label{fig:maxwell-example}
\end{figure}

\def\storeTE#1#2{%
    \expandafter\xdef\csname TE@#1\endcsname{#2}%
}
\def\storeTM#1#2{%
    \expandafter\xdef\csname TM@#1\endcsname{#2}%
}
\begin{filecontents*}{figures/eigvals-TE.csv}
eigvals
1.843077743398
1.843077743399
3.05737871923
3.057378719431
3.83564754064
4.205510570306
4.205510570561
5.323023180498
5.323023181035
5.336927097343
\end{filecontents*}
\csvreader{figures/eigvals-TE.csv}{eigvals=\val}{%
    \storeTE{\thecsvrow}{\val}%
}
\begin{filecontents*}{figures/eigvals-TM.csv}
eigvals
2.407299320024
3.835647518869
3.835647519478
5.140905182165
5.140905184644
5.525756653526
6.386724989385
6.386724989775
7.022803672999
7.022803700814
\end{filecontents*}
\csvreader{figures/eigvals-TM.csv}{eigvals=\val}{%
    \storeTM{\thecsvrow}{\val}%
}
\newcommand{\eigenrowTE}[3]{%
    $\lambda_{#1}$ & \csname TE@#1\endcsname & #2 & #3 \\
}
\newcommand{\eigenrowTM}[3]{%
    $\lambda_{#1}$ & \csname TM@#1\endcsname & #2 & #3 \\
}

\begin{table}[htbp]
	\centering
    \hfill
    \begin{subtable}[b]{0.48\textwidth}
        \centering
        \renewcommand{\arraystretch}{1.2}
        \begin{tabular}{
            c 
            S[table-format=1.5, round-mode=places, round-precision=5]
            S[table-format=1.5, round-mode=places, round-precision=5]
            c
        }
            \toprule
            \multicolumn{4}{c}{\textbf{TE}} \\
            \midrule
            {\(\lambda_{i}\)} & {Computed} & {Reference} & {\(z'_{n,i}\)}\\
            \midrule
            \eigenrowTE{1}{1.84118}{\(z'_{1,1}\)}
            \eigenrowTE{2}{1.84118}{\(z'_{1,1}\)}
            \eigenrowTE{3}{3.05424}{\(z'_{2,1}\)}
            \eigenrowTE{4}{3.05424}{\(z'_{2,1}\)}
            \eigenrowTE{5}{3.83171}{\(z'_{0,1}\)}
            \eigenrowTE{6}{4.20119}{\(z'_{3,1}\)}
            \eigenrowTE{7}{4.20119}{\(z'_{3,1}\)}
            \eigenrowTE{8}{5.31755}{\(z'_{4,1}\)}
            \eigenrowTE{9}{5.31755}{\(z'_{4,1}\)}
            \eigenrowTE{10}{5.33144}{\(z'_{1,2}\)}
            \bottomrule
        \end{tabular}
        \label{tab:maxwell-te}
    \end{subtable}%
    \begin{subtable}[b]{0.48\textwidth}
        \centering
        \renewcommand{\arraystretch}{1.2}
        \begin{tabular}{
            c
            S[table-format=1.5, round-mode=places, round-precision=5]
            S[table-format=1.5, round-mode=places, round-precision=5]
            c
        }
            \toprule
            \multicolumn{4}{c}{\textbf{TM}} \\
            \midrule
            {\(\lambda_{i}\)} & {Computed} & {Reference} & {\(z_{n,i}\)}\\
            \midrule
            \eigenrowTM{1}{2.40483}{\(z_{0,1}\)}
            \eigenrowTM{2}{3.83171}{\(z_{1,1}\)}
            \eigenrowTM{3}{3.83171}{\(z_{1,1}\)}
            \eigenrowTM{4}{5.13562}{\(z_{2,1}\)}
            \eigenrowTM{5}{5.13562}{\(z_{2,1}\)}
            \eigenrowTM{6}{5.52008}{\(z_{0,2}\)}
            \eigenrowTM{7}{6.38016}{\(z_{3,1}\)}
            \eigenrowTM{8}{6.38016}{\(z_{3,1}\)}
            \eigenrowTM{9}{7.01559}{\(z_{1,2}\)}
            \eigenrowTM{10}{7.01559}{\(z_{1,2}\)}
            \bottomrule
        \end{tabular}
        \label{tab:maxwell-tm}
    \end{subtable}
    \hfill\strut
    \caption{Eigenvalues computed from \eqref{eq:maxwell-wf} for TE and TM modes.}
    \label{tab:maxwell-example}
\end{table}

\subsection{n-Form Hodge Laplacian}
\label{subsec:n-form-hl}

For the final numerical test, we run a convergence study for two different polynomial
degrees, and compare them against global refinement. The weak formulation chosen is
associated with the volume-form Hodge Laplacian:
Find \((u, \phi) \in \overline{\hSpace{\polar}}^1 \times \overline{\hSpace{\polar}}^2\) such
that 
\begin{equation}
    \label{eq:n-form-wf}
    \begin{cases}
        \langle v, u \rangle_{\Omega} - \langle dv , \phi \rangle_{\Omega} =
        - \langle v, \phi_{\text{ex}} \rangle_{\partial \Omega}, & \forall v \in
    \overline{\hSpace{\polar}}^1,
    \\
        \langle \tau, du \rangle_{\Omega} =  \langle \tau, f \rangle_{\Omega}, & \forall \tau
    \in \overline{\hSpace{\polar}}^2,
    \end{cases}
\end{equation}
where \(f\) is a forcing term such that \(\phi\) converges to the exact solution:
\begin{equation*}
    \phi_{\text{ex}} \coloneqq \tanh\left(10((x-1)^2 + (y-1)^2 - 2)\right)dx\wedge dy\;,    
\end{equation*}
as is shown in \Cref{fig:n-form-analytical-solution}.
Note that the right-hand side of the first equation in \eqref{eq:n-form-wf} corresponds to a
natural boundary condition on \(\phi\).

The spaces \(\overline{\hSpace{\polar}}^1\) and \(\overline{\hSpace{\polar}}^2\) are first
initialized on a polar mesh with \(32 \times 16\) elements in the polar and radial
directions with \(p^\theta = p^\rho = 2\), and no refinement. Next, we execute 20 steps of
an adaptive refinement scheme, as illustrated in \Cref{fig:adaptive-loop}, with a marking
parameter \(\vartheta=0.20\). The parameter is used to define which basis functions are
marked for refinement as follows:
\begin{enumerate}
    \item The error at element \(\Omega_e\) is computed as \(E_e \coloneqq \|\phi -
        \phi_{\text{ex}}\|_{L^2(\Omega_e)}\) --- since the exact solution is known.
    \item The error of basis function \(P^0_{i, \ell}\) is computed as 
        \(
            (E(P^0_{i, \ell}))^2 \coloneqq 
            \sum\limits_{\Omega_e \subseteq \supp P^0_{i,
            \ell}}\left(E_e\int_{\Omega_e}
            \star P^0_{i, \ell}\right)^2.
        \)
    \item The maximum error is defined as 
        \(
            E_{\text{max}} \coloneqq \max_{i, \ell} E(P^0_{i, \ell}).
        \)
    \item Basis functions are marked for refinement using \(\vartheta\)
        \(
            R \coloneqq\left\{P^0_{i, \ell}: E(P^0_{i, \ell}) \geq
            (1-\vartheta)E_{\text{max}}\right\}.
        \)
    \item The supports of basis functions in \(R\) are refined.
\end{enumerate}

\Cref{fig:n-form-convergence} shows that locally-refined polar splines converge faster than
globally-refined polar splines in the pre-asymptotic regime.
This can be explained by the concentration of degrees of freedom near the steep gradient of
\(\phi_{\text{ex}}\), as opposed to the whole domain.
Since the chosen forcing term exhibits non-trivial behavior near the pole, every element
close to the pole is refined multiple times.

\begin{minipage}[b]{0.45\textwidth}
    \centering
    \begin{tikzpicture}
        \begin{axis}[
                enlargelimits=false,
                width=0.9\textwidth, height=0.9\textwidth,
                hide axis,
                colorbar, colorbar/width=0.2cm,
                point meta min=-1, point meta max=1,
                colormap/jet,
                colorbar style={
                    at={(1.05,0.5)}, anchor=west,
                    ytick={-1, 0, 1},
                    scaled y ticks=false,
                    yticklabels={-1, 0, 1},
                },
            ]
            \addplot graphics [
                xmin=0, xmax=1,
                ymin=0, ymax=1,
            ] {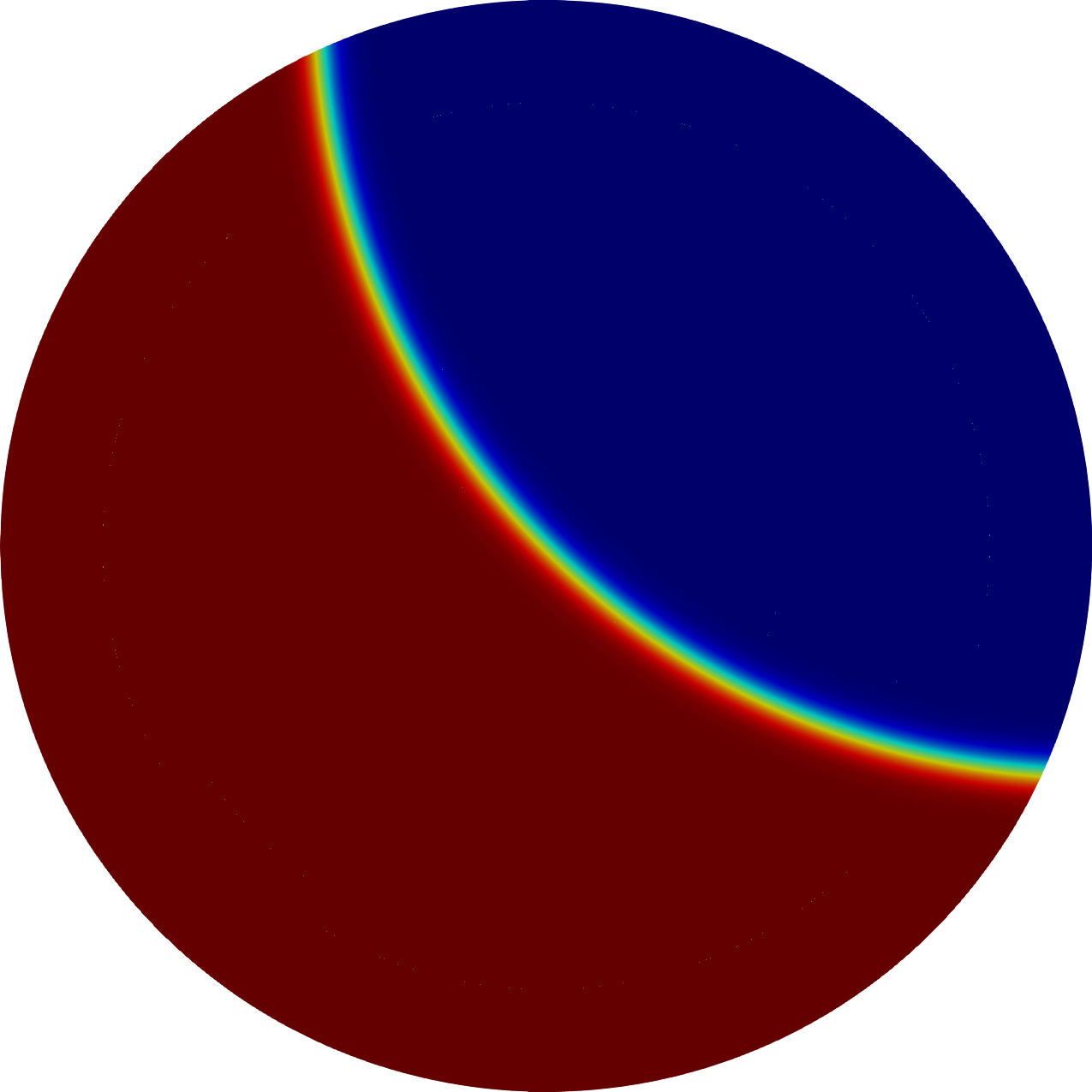};
        \end{axis}
    \end{tikzpicture}
    \captionof{figure}{Exact solution \(\phi_{\text{ex}}\).}
    \label{fig:n-form-analytical-solution}
\end{minipage}
\hfill
\begin{minipage}[b]{0.45\textwidth}
    \centering
    \begin{tikzpicture}[scale=1.0]
        \node (A) at (0,0) {Solve};
        \node (B) at (0,-2.0) {Estimate};
        \node (C) at (4.0,-2.0) {Mark};
        \node (D) at (4.0,0.0) {Refine};

        \draw[->,>=Stealth,line width=1pt] (A) to (B);
        \draw[->,>=Stealth,line width=1pt] (B) to (C);
        \draw[->,>=Stealth,line width=1pt] (C) to (D);
        \draw[->,>=Stealth,dashed,line width=1pt] (D) to (A);
    \end{tikzpicture}
    \captionof{figure}{Adaptive loop illustration. 
    Since the exact solution is known, we can compute the errors after solving, instead
    of estimating them.
    }
    \label{fig:adaptive-loop}
\end{minipage}

\newcommand{\logLogSlopeTriangle}[5]
{

    \pgfplotsextra
    {
        \pgfkeysgetvalue{/pgfplots/xmin}{\xmin}
        \pgfkeysgetvalue{/pgfplots/xmax}{\xmax}
        \pgfkeysgetvalue{/pgfplots/ymin}{\ymin}
        \pgfkeysgetvalue{/pgfplots/ymax}{\ymax}

        \pgfmathsetmacro{\xArel}{#1}
        \pgfmathsetmacro{\yArel}{#3}
        \pgfmathsetmacro{\xBrel}{#1-#2}
        \pgfmathsetmacro{\yBrel}{\yArel}
        \pgfmathsetmacro{\xCrel}{\xArel}

        \pgfmathsetmacro{\lnxB}{\xmin*(1-(#1-#2))+\xmax*(#1-#2)} 
        \pgfmathsetmacro{\lnxA}{\xmin*(1-#1)+\xmax*#1} 
        \pgfmathsetmacro{\lnyA}{\ymin*(1-#3)+\ymax*#3} 
        \pgfmathsetmacro{\lnyC}{\lnyA+#4*(\lnxA-\lnxB)}
        \pgfmathsetmacro{\yCrel}{\lnyC-\ymin)/(\ymax-\ymin)} 

        \coordinate (A) at (rel axis cs:\xArel,\yArel);
        \coordinate (B) at (rel axis cs:\xBrel,\yBrel);
        \coordinate (C) at (rel axis cs:\xCrel,\yCrel);

        \draw[#5]   (A)-- node[pos=0.5,anchor=north] {1}
                    (B)-- 
                    (C)-- node[pos=0.5,anchor=west] {#4}
                    cycle;
    }
}

\begin{figure}[h]
    \centering
    \begin{tikzpicture}
        \begin{loglogaxis}[
            width=\textwidth,
            height=0.4\textheight,
            xlabel={\(N^{-1/2}\)},
            ylabel={Error},
            grid=both,
            grid style={line width=0.2pt, draw=gray!30},
            major grid style={line width=0.4pt, draw=gray!60},
            legend pos=north west,
            legend cell align=left,
            mark size=2pt,
        ]

            \addplot[
                color=black,
                mark=square*,
                thick,
            ] table[
                x expr=1/sqrt(\thisrow{dofs}),
                y=errors,
                col sep=comma,
            ] {figures/hier-p2.csv};
            \addlegendentry{\(\overline{\hSpace{\polar}}^2,\ p^\theta=p^\rho=2\)}

            \addplot[
                color=black,
                mark=square,
                thick,
            ] table[
                x expr=1/sqrt(\thisrow{dofs}),
                y=errors,
                col sep=comma,
            ] {figures/tp-p2.csv};
            \addlegendentry{\(\overline{\vSpace{\polar}}^2,\ p^\theta=p^\rho=2\)}

            \addplot[
                color=black,
                mark=triangle*,
                thick,
            ] table[
                x expr=1/sqrt(\thisrow{dofs}),
                y=errors,
                col sep=comma,
            ] {figures/hier-p3.csv};
            \addlegendentry{\(\overline{\hSpace{\polar}}^2,\ p^\theta=p^\rho=3\)}

            \addplot[
                color=black,
                mark=triangle,
                thick,
            ] table[
                x expr=1/sqrt(\thisrow{dofs}),
                y=errors,
                col sep=comma,
            ] {figures/tp-p3.csv};
            \addlegendentry{\(\overline{\vSpace{\polar}}^2,\ p^\theta=p^\rho=3\)}

            \logLogSlopeTriangle{0.2}{0.1}{0.52}{2}{black};
            \logLogSlopeTriangle{0.18}{0.1}{0.41}{3}{black};
        \end{loglogaxis}
    \end{tikzpicture}
    \caption{
        Convergence rates for \eqref{eq:n-form-wf} using different polynomial degrees. \(N\)
        is defined as the number of degrees of freedom of either
        \(\overline{\hSpace{\polar}}^2\) or \(\overline{\vSpace{\polar}}^2\).
    } \label{fig:n-form-convergence}
\end{figure}

\begin{figure}[htbp]
    \def\sideLength{0.45\textwidth}
    \def\plotHeight{5.7cm}
    \centering
    \hfill
    \begin{subfigure}[b]{\sideLength}
        \includegraphics[height=\plotHeight]{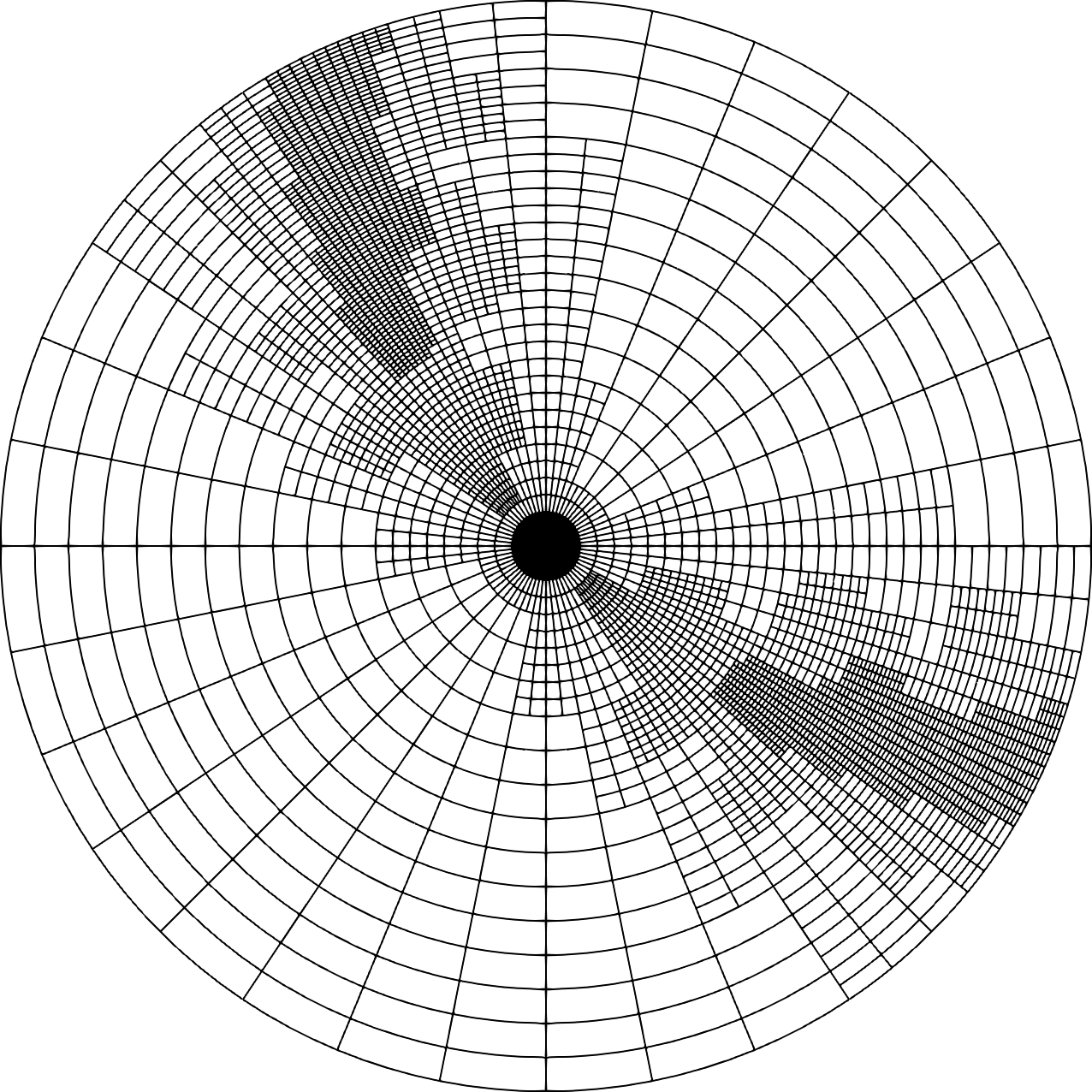}
        \caption{Full mesh.}
        \label{fig:n-form-mesh-full}
    \end{subfigure}
    \hfill
    \begin{subfigure}[b]{\sideLength}
        \centering
        \includegraphics[height=\plotHeight]{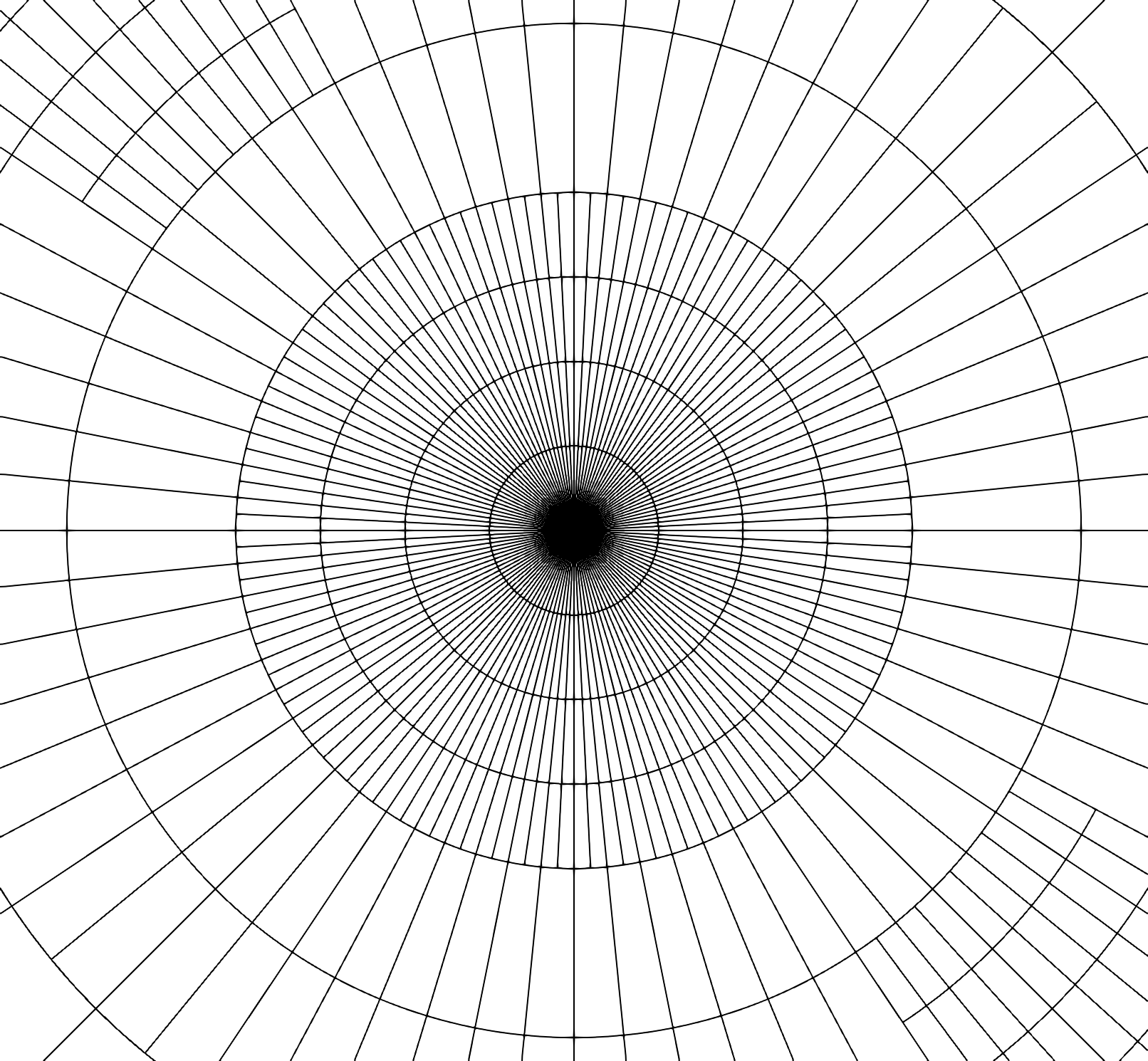}
        \caption{Close-up of the pole.}
        \label{fig:n-form-mesh-close}
    \end{subfigure}
    \hfill\strut
    \caption{
        Locally-refined polar mesh after the last refinement step, for the case \(p^\theta =
        p^\rho=3\).
    }
    \label{fig:n-form-mesh}
\end{figure}

%% file: sections/conclusions.tex
\section{Conclusions}
\label{sec:conclusions}

We have presented and analyzed adaptively-refinable, structure-preserving discretizations of the de Rham complex on polar domains.
The construction combines two ingredients: the polar spline differential forms of \cite{Toshniwal:2021}, which extract a smoother subspace from the tensor-product B-spline forms on a collapsed-edge parameterization of a disk-like domain, and the hierarchical spline framework of \cite{Vuong:2011,Giannelli:2014}, which selects, level by level, the active basis functions and thereby enables local refinement.
In this way, our results extend the polar spline de Rham complex of \cite{Toshniwal:2021} to the adaptively-refinable setting, and complement the hierarchical B-spline de Rham complexes of \cite{Evans:2020,Shepherd:2024,Cabanas:2025,Dijkstra:2026} by accommodating the polar singularity.

A key enabler of this extension is the close structural similarity between polar splines and tensor-product B-splines.
In particular, we prove that the polar spline $k$-form spaces, $k = 0, 1, 2$, are nested, and that their basis functions are locally linearly independent.
These are precisely the properties required by the abstract hierarchical spline theory of \cite{Giannelli:2014}, and they allow us to establish the linear independence of the hierarchical polar spline $k$-form basis functions.
Furthermore, to prove exactness of the corresponding complex, we exploited the local nature of the polar spline construction by introducing the notion of a reduced complex that isolates the behavior of the spaces in the neighborhood of the polar point.

In the context of structure-preserving discretizations on polar domains, the resulting construction has the following features.
\begin{itemize}
    \item It is cohomologically correct under sufficient conditions on the domain hierarchy that impose no constraints near the pole.
    The only conditions required are the known tensor-product conditions of \cite{Shepherd:2024}, which have to be verified only away from the pole; in particular, the elements surrounding the pole may be refined freely without breaking the cohomology of the complex.
    \item Together with the refinement algorithms for hierarchical B-spline de Rham complexes proposed in \cite{Cabanas:2025,Dijkstra:2026}, it provides the first adaptively-refinable, structure-preserving spline differential forms on polar domains.
    \item It is assembled entirely through extraction operators acting on periodic tensor-product B-splines, so it can be incorporated into existing isogeometric software with little effort; all the numerical experiments reported here were carried out with the structure-preserving library \emph{Mantis.jl} \cite{Cabanas:2026}.
\end{itemize}
The numerical experiments confirm the theory and illustrate the practical behavior of the construction.

Several extensions of the present work are natural.
The construction has been developed here for planar, disk-like domains with a single polar point; the cases of double polar points and of sphere-like surface geometries treated in \cite{Toshniwal:2021} can be addressed with analogous arguments.
Another generalization is to replace the univariate B-spline spaces with generalized spline spaces, such as multi-degree \cite{Toshniwal:2020} or Tchebycheffian splines \cite{Hiemstra:2020}, which retain the structural properties exploited in this paper.
This would require the generalization of the results from \cite{Shepherd:2024} to the case of hierarchical Tchebycheffian splines.
Finally, since polar spline de Rham complexes underpin structure-preserving solvers in magnetohydrodynamics and plasma physics \cite{Holderied:2022b,Possanner:2023}, the local refinement enabled here is a step towards adaptive, structure-preserving simulations on the tokamak- and stellarator-like geometries arising in those applications.

%% file: sections/app-intermediate-extraction.tex
\section{Extraction operators defining $\bspS^{k,\tint}$}\label{app:intermediate-extraction}
The pullbacks of splines in $\overline{\bspS}^{k,\tint}$, $k = 0, 1, 2$, are members of a subspace of $\tpF{k,\per}{\pkntsU,\kntsV}$ which can be defined using suitable extraction operators $\mat{E}^{k,\tint}$, $k = 0, 1, 2$.

Here we assume that we are working with the polar map as in \eqref{eq:polar_map}.
We will denote size $a \times a$ identity matrices by $\mat{I}_a$.
An explicit zero matrix of size $a \times b$ will be denoted by $\mat{0}_{a \times b}$.
Finally, with $n^{k,\tint}$, $k = 0, 1, 2$, defined as:
\begin{equation}
    n^{0,\tint} = \pndofU(\ndofV-1) + 1\;,\qquad
    n^{1,\tint} = 2\pndofU(\ndofV-1)\;,\qquad
    n^{2,\tint} = \pndofU(\ndofV-1)\;.
\end{equation}

\subsection{Definition of $\mat{E}^{0,\tint}$}
We define $\mat{E}^{0,\tint}$ as the following matrix of size $n^{0,\tint} \times \pndofU\ndofV$,
\begin{equation}\label{eq:int-ext-0}
    \mat{E}^{0,\tint} := 
    \begin{bmatrix}
    1 & \cdots & 1 &\\
    & & & \mat{I}_{n^{0,\tint}-1}
    \end{bmatrix}\;.
\end{equation}

\subsection{Definition of $\mat{E}^{1,\tint}$}
We define $\mat{E}^{1,\tint}$ as the following matrix of size $n^{1,\tint} \times \pndofU(2\ndofV - 1)$,
\begin{equation}\label{eq:int-ext-1}
    \mat{E}^{1,\tint} := 
    \begin{bmatrix}
    \mat{0}_{n^{1,\tint} \times \pndofU} & \mat{I}_{n^{1,\tint}}
    \end{bmatrix}\;.
\end{equation}

\subsection{Definition of $\mat{E}^{2,\tint}$}
We define $\mat{E}^{2,\tint}$ simply as the identity matrix of size $n^{2,\tint} \times n^{2,\tint}$,
\begin{equation}\label{eq:int-ext-2}
    \mat{E}^{2,\tint} := \mat{I}_{n^{2,\tint}}\;.
\end{equation}

%% file: sections/app-extraction.tex
\section{Extraction operators defining $\polarF{k}$}\label{app:extraction}
The definition of $\polarF{k}$ through extraction operators was performed in \cite{Toshniwal:2021}.
However, for the theoretical and implementational purposes of this paper, we represent these spaces as subspaces of the intermediate space $\bspS^{k,\tint}$.
The corresponding extraction matrices $\mat{E}^{k}$, $k = 0, 1, 2$, are defined below.

As in \cref{app:intermediate-extraction}, we will denote an identity matrix of size $a \times a$ by $\mat{I}_a$.
An explicit zero matrix of size $a \times b$ will be denoted by $\mat{0}_{a \times b}$.
Finally, with $n^k$, $k = 0, 1, 2$, as (cf. Theorem \ref{thm:polar-splines}\ref{item:cardinality}),
\begin{equation}
    n^0 = n^{0,\tint} - \pndofU + 2\;,\qquad
    n^1 = n^{1,\tint} - 2\pndofU + 2\;,\qquad
    n^2 = n^{2,\tint} - \pndofU\;.
\end{equation}

\subsection{Definition of $\mat{E}^0$}
We define $\mat{E}^0$ as the following block-diagonal matrix,
\begin{equation}\label{eq:polar-ext-0}
    \mat{E}^0 := 
    \begin{bmatrix}
    \widehat{\mat{E}}^0 &\\
    & \mat{I}_{n^0-3}
    \end{bmatrix}\;,
\end{equation}
where $\widehat{\mat{E}}^0$ is a $3 \times (\pndofU+1)$ matrix defined as
\begin{equation}\label{eq:polar-ext-0-core}
    \begin{split}
    \widehat{\mat{E}}^0 &:= 
    \underbrace{\begin{bmatrix*}[r]
        \frac{1}{3} & 0 & \frac{1}{3}\\
        -\frac{1}{6} & \frac{\sqrt{3}}{6} & \frac{1}{3}\\
        -\frac{1}{6} & -\frac{\sqrt{3}}{6} & \frac{1}{3}
    \end{bmatrix*}}_{:= \mat{L}}
    \underbrace{
    \begin{bmatrix*}[c]
        \frac{1}{R} & &\\
        & \frac{1}{R} & \\
        & & 1
    \end{bmatrix*}}_{:= \mat{R}}
    \begin{bmatrix*}[c]
        \mbf{F}_{11} & \mbf{F}_{12} & & \mbf{F}_{22} & \cdots & \mbf{F}_{\pndofU 2}\\
        1 & 1 & & 1 & \cdots & 1
    \end{bmatrix*}\;,
\end{split}
\end{equation}
and $R := \max\limits_{\substack{i = 1, \dots, \pndofU \\ j = 1, 2}} \|\mbf{F}_{ij}\|_2$.

\subsection{Definition of $\mat{E}^1$}
We define $\mat{E}^1$ as the following block-diagonal matrix,
\begin{equation}\label{eq:polar-ext-1}
    \mat{E}^1 := 
    \begin{bmatrix}
    \widehat{\mat{E}}^1_1 & & \widehat{\mat{E}}^1_2 & \\
    & \mat{I}_{\pndofU(\ndofV-2)} & & &\\
    &  & & & \mat{I}_{\pndofU(\ndofV-2)}
    \end{bmatrix}\;,
\end{equation}
where $\widehat{\mat{E}}^1_1$ is a $[2 \times \pndofU]$ matrix defined as
\begin{equation}\label{eq:polar-ext-1-core-1}
    \begin{split}
    \widehat{\mat{E}}^1_1 &:= 
    \begin{bmatrix*}[c]
        \bary^{2}_{22}-\bary^{2}_{12} & \bary^{2}_{32}-\bary^{2}_{22} &\cdots &  \bary^{2}_{12}-\bary^{2}_{\pndofU 2}\\
        \bary^{3}_{22}-\bary^{3}_{12} & \bary^{3}_{32}-\bary^{3}_{22} &\cdots & \bary^{3}_{12}-\bary^{3}_{\pndofU 2}
    \end{bmatrix*}\;,
\end{split}
\end{equation}
and $\widehat{\mat{E}}^1_2$ is a $[2 \times \pndofU]$ matrix defined as
\begin{equation}\label{eq:polar-ext-1-core-2}
    \begin{split}
        \widehat{\mat{E}}^1_2 &:= 
        \begin{bmatrix*}[c]
            \bary^{2}_{12}-\bary^{2}_{11} & \bary^{2}_{22}-\bary^{2}_{21} & \cdots & \bary^{2}_{\pndofU 2}-\bary^{2}_{\pndofU 1}\\
            \bary^{3}_{12}-\bary^{3}_{11} & \bary^{3}_{22}-\bary^{3}_{21} & \cdots & \bary^{3}_{\pndofU 2}-\bary^{3}_{\pndofU 1}
        \end{bmatrix*}\;.
\end{split}
\end{equation}

\subsection{Definition of $\mat{E}^2$}
We define $\mat{E}^2$ as the following matrix,
\begin{equation}\label{eq:polar-ext-2}
    \mat{E}^2 := 
    \begin{bmatrix}
        \mat{0}_{n^2 \times \pndofU} & \mat{I}_{n^2}
    \end{bmatrix}\;.
\end{equation}

%% file: sections/app-extraction-refinement.tex
\section{Definition of level-$\ell$ $k$-form polar basis functions}\label{app:extraction-refinement}

In order to build the matrices $\mat{E}^k_\ell$, $k = 0, 1, 2$, $\ell = 0, \dots, L$, we reuse the definitions from Appendix \ref{app:extraction}.
However, we do so for a modified choice of the control points $\mbf{F}_{ij}$ for each level.

In particular, assume that the control points $\mbf{F}_{ij}$, as in \eqref{eq:polar_map}-\eqref{eq:control-points}, are given at level $0$.
Moreover, let $\mat{K}_{\ell}^\cU$ and $\mat{K}_{\ell}^\cV$ denote the refinement matrices such that
\begin{equation}
    \begin{split}
        \bspB[\pkntsU_\ell] &= \mat{K}_{\ell}^{\cU,T} \bspB[\pkntsU_{\ell+1}]\;,\\
        \bspB[\kntsV_\ell] &= \mat{K}_{\ell}^{\cV,T} \bspB[\kntsV_{\ell+1}]\;,\\
    \end{split}
\end{equation}
Then, we define the control points for the polar map at level $\ell > 0$ recursively as,
\begin{equation}\label{eq:control-point-refinement}
    \begin{split}
        \left[\mbf{F}_{ij,\ell} : i = 1, \dots, \pndofU_{\ell}, j = 1, \dots, \ndofV_\ell\right] 
    &:=\\
    &\hspace*{-3cm}\left(
        \underbrace{\mat{K}_{\ell-1}^{\cU} \otimes \mat{K}_{\ell-1}^{\cV}}_{ =: \mat{K}^{\per}_{\ell-1}}
    \right)
    \left[\mbf{F}_{ij,\ell-1} : i = 1, \dots, \pndofU_{\ell-1}, j = 1, \dots, \ndofV_{\ell-1}\right] \;.
    \end{split}
\end{equation}
Then, repeating the construction from \cref{app:intermediate-extraction,app:extraction} with the updated control points, we can define the matrices $\mat{E}^{k,\tint}_{\ell}$ and $\mat{E}^k_{\ell}$, $k = 0, 1, 2$, and use them in \eqref{eq:polar-spline-levels}.

%% file: bibliography.bib
@article{Dijkstra:2026,
  title={Macro-element refinement schemes for {THB}-splines: Applications to {B}{\'e}zier projection and structure-preserving discretizations},
  author={Dijkstra, Kevin and Giannelli, Carlotta and Toshniwal, Deepesh},
  journal={Computer Methods in Applied Mechanics and Engineering},
  volume={452},
  pages={118707},
  year={2026},
  publisher={Elsevier}
}

@inproceedings{Possanner:2023,
  title={High-order structure-preserving algorithms for plasma hybrid models},
  author={Possanner, Stefan and Holderied, Florian and Li, Yingzhe and Na, Byung Kyu and Bell, Dominik and Hadjout, Said and G{\"u}{\c{c}}l{\"u}, Yaman},
  booktitle={International Conference on Geometric Science of Information},
  pages={263--271},
  year={2023},
  organization={Springer}
}

@article{Gucclu:2025,
  title={A broken-{FEEC} framework for structure-preserving discretizations of polar domains with tensor-product splines},
  author={G{\"u}{\c{c}}l{\"u}, Yaman and Patrizi, Francesco and Pinto, Martin Campos},
  journal={arXiv preprint arXiv:2505.15996},
  year={2025}
}

@ARTICLE{Arnold:2006,
	author = {Arnold, Douglas N. and Falk, Richard S. and Winther, Ragnar},
	title = {Finite element exterior calculus, homological techniques, and applications},
	journal = {Acta Numer.},
	year = {2006},
	volume = {15},
	pages = {1--155},
	fjournal = {Acta Numerica},
	issn = {0962-4929},
	mrclass = {58A15 (47N40 65N30 74G15 74S05)},
	mrnumber = {MR2269741 (2007j:58002)},
	mrreviewer = {Thomas Garrity}
}

@ARTICLE{Arnold:2010,
	author = {Arnold, Douglas N. and Falk, Richard S. and Winther, Ragnar},
	title = {Finite element exterior calculus: from {H}odge theory to numerical
	stability},
	journal = {Bull. Amer. Math. Soc. (N.S.)},
	year = {2010},
	volume = {47},
	pages = {281--354},
	number = {2},
	coden = {BAMOAD},
	fjournal = {American Mathematical Society. Bulletin. New Series},
	issn = {0273-0979},
	mrclass = {58A14 (65N30)},
	mrnumber = {2594630 (2011f:58005)},
	mrreviewer = {Satyendra K. Tomar}
}

@book{deBoor:1978,
  title={A practical guide to splines},
  author={De Boor, Carl},
  volume={27},
  year={1978},
  publisher={springer New York}
}

@ARTICLE{Buffa:2010,
	author = {A. Buffa and G. Sangalli and R. V{\'a}zquez},
	title = {Isogeometric analysis in electromagnetics: {B}-splines approximation},
	journal = {Comput. Methods Appl. Mech. Engrg.},
	year = {2010},
	volume = {199},
	pages = {1143 - 1152},
	number = {17-20},
	fjournal = {Computer Methods in Applied Mechanics and Engineering},
	issn = {0045-7825}
}

@article{Buffa:2011,
  title={Isogeometric discrete differential forms in three dimensions},
  author={Buffa, Annalisa and Rivas, Judith and Sangalli, Giancarlo and V{\'a}zquez, Rafael},
  journal={SIAM Journal on Numerical Analysis},
  volume={49},
  number={2},
  pages={818--844},
  year={2011},
  publisher={SIAM}
}

@article{Cabanas:2025,
  title={Construction of exact refinements for the two-dimensional {HB}/{THB}-spline de {R}ham complex},
  author={Cabanas, Diogo C and Shepherd, Kendrick M and Toshniwal, Deepesh and V{\'a}zquez, Rafael},
  journal={SIAM Journal on Scientific Computing (accepted); arXiv preprint arXiv:2502.19542},
  year={2026}
}

@article {CamposPinto:2022,
	AUTHOR = {Campos Pinto, Martin and Kormann, Katharina and
	Sonnendr\"ucker, Eric},
	TITLE = {Variational framework for structure-preserving electromagnetic
	particle-in-cell methods},
	JOURNAL = {J. Sci. Comput.},
	FJOURNAL = {Journal of Scientific Computing},
	VOLUME = {91},
	YEAR = {2022},
	NUMBER = {2},
	PAGES = {Paper No. 46, 39},
	ISSN = {0885-7474,1573-7691},
	MRCLASS = {78A25 (35Q61 35Q70 65M60 65M70 70S05)},
	MRNUMBER = {4402737},
	DOI = {10.1007/s10915-022-01781-3},
	URL = {https://doi.org/10.1007/s10915-022-01781-3},
}

@BOOK{Cottrell:2009,
	title = {Isogeometric {A}nalysis: toward integration of {CAD} and {FEA}},
	publisher = {John Wiley \& Sons},
	year = {2009},
	author = {Cottrell, J. A. and Hughes, T. J. R. and Bazilevs, Y.}
}

@article {Doelz:2024,
	AUTHOR = {D\"olz, J\"urgen and Ebert, David and Sch\"ops, Sebastian and
	Ziegler, Anna},
	TITLE = {Shape uncertainty quantification of {M}axwell eigenvalues and
	-modes with application to {TESLA} cavities},
	JOURNAL = {Comput. Methods Appl. Mech. Engrg.},
	FJOURNAL = {Computer Methods in Applied Mechanics and Engineering},
	VOLUME = {428},
	YEAR = {2024},
	PAGES = {Paper No. 117108, 21},
	ISSN = {0045-7825,1879-2138},
	MRCLASS = {78A99},
	MRNUMBER = {4756441},
	DOI = {10.1016/j.cma.2024.117108},
	URL = {https://doi.org/10.1016/j.cma.2024.117108},
}

@ARTICLE{Evans:2013,
	author = {Evans, J. A. and Hughes, T. J. R.},
	title = {Isogeometric Divergence-conforming {B}-splines for the {U}nsteady
	{N}avier-{S}tokes {E}quations},
	journal = {J. Comput. Phys.},
	year = {2013},
	volume = {241},
	pages = {141 - 167},
	fjournal = {Journal of Computational Physics },
	issn = {0021-9991}
}

@article{Evans:2020,
  title={Hierarchical {B}-spline complexes of discrete differential forms},
  author={Evans, John A and Scott, Michael A and Shepherd, Kendrick M and Thomas, Derek C and V{\'a}zquez Hern{\'a}ndez, Rafael},
  journal={IMA Journal of Numerical Analysis},
  volume={40},
  number={1},
  pages={422--473},
  year={2020},
  publisher={Oxford University Press}
}

@article{Giannelli:2012,
  title={{THB}-splines: The truncated basis for hierarchical splines},
  author={Giannelli, Carlotta and J{\"u}ttler, Bert and Speleers, Hendrik},
  journal={Computer Aided Geometric Design},
  volume={29},
  number={7},
  pages={485--498},
  year={2012},
  publisher={Elsevier}
}

@article{Giannelli:2014,
  title={Strongly stable bases for adaptively refined multilevel spline spaces},
  author={Giannelli, Carlotta and J{\"u}ttler, Bert and Speleers, Hendrik},
  journal={Advances in Computational Mathematics},
  volume={40},
  number={2},
  pages={459--490},
  year={2014},
  publisher={Springer}
}

@book {Hatcher:2002,
	AUTHOR = {Hatcher, A.},
	TITLE = {Algebraic topology},
	publisher = {Cambridge University Press, Cambridge},
	YEAR = {2002},
	PAGES = {xii+544},
	ISBN = {0-521-79160-X; 0-521-79540-0},
	MRCLASS = {55-01 (55-00)},
	MRNUMBER = {1867354 (2002k:55001)},
	MRREVIEWER = {Donald W. Kahn},
    note={http://www. math. cornell. edu/\~{} hatcher/AT/ATpage. html},
}

@article{Hiemstra:2020,
  title={A {T}chebycheffian extension of multidegree {B}-splines: Algorithmic computation and properties},
  author={Hiemstra, Ren{\'e} R and Hughes, Thomas JR and Manni, Carla and Speleers, Hendrik and Toshniwal, Deepesh},
  journal={SIAM Journal on Numerical Analysis},
  volume={58},
  number={2},
  pages={1138--1163},
  year={2020},
  publisher={SIAM}
}

@article {Holderied:2021,
	AUTHOR = {Holderied, Florian and Possanner, Stefan and Wang, Xin},
	TITLE = {M{HD}-kinetic hybrid code based on structure-preserving finite
	elements with particles-in-cell},
	JOURNAL = {J. Comput. Phys.},
	FJOURNAL = {Journal of Computational Physics},
	VOLUME = {433},
	YEAR = {2021},
	PAGES = {110143},
	ISSN = {0021-9991},
	MRCLASS = {76X05 (65M75 76M28 82D10 82M10)},
	MRNUMBER = {4218536},
	DOI = {10.1016/j.jcp.2021.110143},
	nourl = {https://doi.org/10.1016/j.jcp.2021.110143},
}

@article {Holderied:2022,
	AUTHOR = {Holderied, Florian and Possanner, Stefan},
	TITLE = {Magneto-hydrodynamic eigenvalue solver for axisymmetric
	equilibria based on smooth polar splines},
	JOURNAL = {J. Comput. Phys.},
	FJOURNAL = {Journal of Computational Physics},
	VOLUME = {464},
	YEAR = {2022},
	PAGES = {Paper No. 111329, 34},
	ISSN = {0021-9991,1090-2716},
	MRCLASS = {65N25 (65N30 76W05)},
	MRNUMBER = {4432263},
	MRREVIEWER = {Zhongyi\ Huang},
	DOI = {10.1016/j.jcp.2022.111329},
	URL = {https://doi.org/10.1016/j.jcp.2022.111329},
}

@phdthesis{Holderied:2022b,
  title={{STRUPHY}: a structure-preserving hybrid {MHD}-kinetic code for the interaction of energetic particles with {A}lfv{\'e}n waves in magnetized plasmas},
  author={Holderied, Florian Erich},
  year={2022},
  school={Technische Universit{\"a}t M{\"u}nchen}
}

@article {Kamensky:2017,
	AUTHOR = {Kamensky, David and Hsu, Ming-Chen and Yu, Yue and Evans, John
	A. and Sacks, Michael S. and Hughes, Thomas J. R.},
	TITLE = {Immersogeometric cardiovascular fluid-structure interaction
	analysis with divergence-conforming {B}-splines},
	JOURNAL = {Comput. Methods Appl. Mech. Engrg.},
	FJOURNAL = {Computer Methods in Applied Mechanics and Engineering},
	VOLUME = {314},
	YEAR = {2017},
	PAGES = {408--472},
	ISSN = {0045-7825,1879-2138},
	MRCLASS = {65M60 (74F10 76Z05 92C35)},
	MRNUMBER = {3589879},
	DOI = {10.1016/j.cma.2016.07.028},
	URL = {https://doi.org/10.1016/j.cma.2016.07.028},
}

@article{Shepherd:2024,
  title={Locally-Verifiable Sufficient Conditions for Exactness of the Hierarchical {B}-spline Discrete de {R}ham Complex in $\mathbb{R}^n$},
  author={Shepherd, Kendrick and Toshniwal, Deepesh},
  journal={Foundations of Computational Mathematics},
  pages={1--43},
  year={2024},
  publisher={Springer}
}

@article{Toshniwal:2017,
  title={Multi-degree smooth polar splines: A framework for geometric modeling and isogeometric analysis},
  author={Toshniwal, Deepesh and Speleers, Hendrik and Hiemstra, Ren{\'e} R and Hughes, Thomas JR},
  journal={Computer Methods in Applied Mechanics and Engineering},
  volume={316},
  pages={1005--1061},
  year={2017},
  publisher={Elsevier}
}

@article{Toshniwal:2020,
  title={Multi-degree {B}-splines: Algorithmic computation and properties},
  author={Toshniwal, Deepesh and Speleers, Hendrik and Hiemstra, Ren{\'e} R and Manni, Carla and Hughes, Thomas JR},
  journal={Computer Aided Geometric Design},
  volume={76},
  pages={101792},
  year={2020},
  publisher={Elsevier}
}

@article{Speleers:2021,
  title={A general class of {C1} smooth rational splines: Application to construction of exact ellipses and ellipsoids},
  author={Speleers, Hendrik and Toshniwal, Deepesh},
  journal={Computer-Aided Design},
  volume={132},
  pages={102982},
  year={2021},
  publisher={Elsevier}
}

@article{Toshniwal:2021,
  title={Isogeometric discrete differential forms: Non-uniform degrees, {B}{\'e}zier extraction, polar splines and flows on surfaces},
  author={Toshniwal, Deepesh and Hughes, Thomas JR},
  journal={Computer Methods in Applied Mechanics and Engineering},
  volume={376},
  pages={113576},
  year={2021},
  publisher={Elsevier}
}

@ARTICLE{Vuong:2011,
	author = { Vuong,A.-V. and Giannelli, C. and J\"uttler, B. and Simeon, B.},
	title = { A hierarchical approach to adaptive local refinement in isogeometric
	analysis},
	journal = {Comput. Methods Appl. Mech. Engrg.},
	year = {2011},
	volume = {200},
	pages = {3554--3567},
	number = {49-52},
	coden = {CMMECC},
	fjournal = {Computer Methods in Applied Mechanics and Engineering},
	issn = {0045-7825}
}

@misc{Cabanas:2026,
  author       = {C. Cabanas, Diogo and
                  Dekker, Joey and
                  Palha, Artur and
                  Toshniwal, Deepesh},
  title        = {{M}antis.jl a Structure-Preserving Finite Element
                   Library
                  },
  month        = feb,
  year         = 2026,
  publisher    = {Zenodo},
  version      = {v0.3.0},
  doi          = {10.5281/zenodo.18697535},
  url          = {https://doi.org/10.5281/zenodo.18697535},
  swhid        = {swh:1:dir:a57a074e12792ac4f59eaa6e28a447e0c0468412
                   ;origin=https://doi.org/10.5281/zenodo.17495870;vi
                   sit=swh:1:snp:255b896f74e10b886efb6d9c30e2023dee03
                   1e95;anchor=swh:1:rel:7dc8ee6b866bc34fadfb85a6f099
                   4456b286046a;path=MantisFEM-Mantis.jl-fd03adf
                  },
}
